\newtheorem{thm}{Theorem}[section]
\newtheorem{lemma}[thm]{Lemma}
\newtheorem{prop}[thm]{Proposition}
\newtheorem{cor}[thm]{Corollary} 
\theoremstyle{definition}
\theoremstyle{remark}
\newtheorem{rmk}[thm]{Remark}
\numberwithin{equation}{section}
\newcommand{\fa}{\mathfrak{a}}
\renewcommand{\AA}{\mathbb{A}}
\newcommand{\RR}{\mathbb{R}} 
\newcommand{\ZZ}{\mathbb{Z}}
\newcommand{\A}{\mathbb{A}}
\newcommand{\cA}{\mathcal{A}}
\newcommand{\cE}{\mathcal{E}}
\newcommand{\cP}{\mathcal{P}}
\newcommand{\cS}{\mathcal{S}}
\newcommand{\cusp}{\mathrm{cusp}}
\newcommand{\half}{\frac {1} {2}}
\DeclareMathOperator{\GL}{\mathrm{GL}}
\DeclareMathOperator{\RGL}{\mathrm{RGL}}
\DeclareMathOperator{\Herm}{\mathrm{Herm}}
\DeclareMathOperator{\Mp}{\mathrm{Mp}}
\DeclareMathOperator{\tr}{\mathrm{tr}}
\DeclareMathOperator{\Hom}{\mathrm{Hom}}
\DeclareMathOperator{\Res}{\mathrm{Res}} 
\DeclareMathOperator{\res}{\mathrm{res}} 
\newcommand{\isom}{\cong}
\DeclareMathOperator{\Nm}{\mathrm{Nm}}
\DeclareMathOperator{\vol}{\mathrm{vol}}
\DeclareMathOperator{\lmod}{\backslash}
\DeclareMathOperator{\LO}{\mathrm{LO}}
\DeclareMathOperator{\FO}{\mathrm{FO}} 
\let\Re\undefined
\DeclareMathOperator{\Re}{\mathrm{Re}}
\newcommand{\form}[2]{\langle{#1},{#2}\rangle}
\renewcommand{\RGL}{R_{E/F}\GL}
\begin{document}

\title[Poles of Eisenstein series and theta lifts] {Poles of Eisenstein series and theta lifts for unitary groups}
\author{Chenyan Wu}
\email{chenyan.wu@unimelb.edu.au}
\address{School of Mathematics and Statistics
The University of Melbourne, Victoria 3010, Australia}

\thanks{Corresponding author.}
\keywords{theta correspondence, Eisenstein series, L-function, period integral, Arthur truncation}
\date{}

\begin{abstract}
  We derive a precise relation of poles of  Eisenstein series associated to the cuspidal datum $\chi\otimes\sigma$ and the lowest occurrence of theta lifts of a cuspidal automorphic representation $\sigma$ of a unitary group, where $\chi$ is a conjugate self-dual character. A key ingredient of the proof is the computation of period integrals of truncated Eisenstein series. 
  \end{abstract}
\maketitle{}

\section*{Introduction}
\label{sec:introduction}

There has been extensive study on the invariants, such as the $L$-functions and the first occurrence indices of the theta lifts, attached to cuspidal representations of classical groups or of their covering groups. For concreteness, we will give an overview only for the unitary case, as this is the focus of this paper, though we will still list  references for other cases in this introduction. 

Let $F$ be a number field and $E$ a quadratic field extension of $F$. Let $\AA:=\AA_F$ and $\AA_E$ be the corresponding adele rings.   Let $X$ be a skew-Hermitian space over $E$ and let $G(X)$ be the unitary group associated to $X$.  Let $Y$ be a Hermitian space over $E$ and let $G(Y)$ be the unitary group associated to $Y$. Then $G(X)$ and $G(Y)$ form a dual reductive pair and we can define an automorphic representation of $G(Y)(\AA)$ called the (global) theta lift of a cuspidal automorphic representation of $G(X)(\AA)$. We can also form the theta lift in the opposite direction.
 Let $\sigma$ be a cuspidal representation of  $G(X)(\AA)$.  The Rallis inner product formula\cite{MR1289491,MR3279536} shows that
the poles of the $L$-function  $L(s,\sigma\times\chi^{-1})$  are related to the non-vanishing of theta lifts of $\sigma$. Here $\chi$ is a conjugate self-dual automorphic character of $\AA_E^\times$, which is needed to define the theta lifts. It is  derived from the doubling  integral construction of $L$-functions\cite{ps-rallis87_l_funct_for_the_class_group} (see also \cite{MR3006697,MR3211043}), in which the Siegel Eisenstein series play a key role and the regularised Siegel-Weil formula\cite{MR1289491,MR1863861,MR2064052}, which expresses the regularised theta integrals as values or residues of the Siegel Eisenstein series. The boundary case of the Siegel-Weil formula was treated in \cite{MR3595433} and the Siegel-Weil formula was extended to the `second term range' by \cite{MR3279536}. 

Due to the presence of ramified local factors in the Rallis inner product formula, the relation between the locations of the poles of $L(s,\sigma\times\chi^{-1})$ and the non-vanishing of theta lifts is not exact. One way to control the local factors in as in \cite{MR3211043}. The poles of the completed $L$-functions were interpreted as the obstruction to the local-global principle of theta lift. 

We will follow an idea of M\oe glin's\cite{MR1473165} to  consider the poles of  the  Eisenstein series associated to the cuspidal datum $\chi\otimes\sigma$. These Eisenstein series are closely related to the $L$-functions $L(s,\sigma\times\chi^{-1})$. We note that these are not Siegel Eisenstein series. It can be  shown that the location of the maximal pole of the Eisenstein series we consider  characterises completely the lowest occurrence index of theta lifts of $\sigma$. The cases of the orthogonal and symplectic/metaplectic dual reductive pairs were treated in \cite{MR1473165,MR2540878,wu:_period-metaplectic}. We started the investigation of the case of  unitary dual reductive pairs in \cite{MR3435720} which worked out many important preliminary results and the aim of this paper is to  complete the results. We also correct some misstatement and typos in \cite{MR3435720}.

To state our main theorem, first we explain briefly the definitions of first occurrence index and lowest occurrence index of theta lifts. Assume that $Y$ is an anisotropic Hermitian space, so that it sits at the bottom of its Witt tower.
For $a\in \ZZ_{\ge 0}$, we form Hermitian spaces $Y_a$ by adjoining $a$ copies of the hyperbolic plane to $Y$. The cuspidal representation $\sigma$ of $G(X)(\AA)$ can be lifted to $G(Y_a)(\AA)$ for each $a$. The theta lifts do not vanish for $a$ large enough\cite[Theorem~I.2.1]{MR743016}. The first occurrence index  $\FO_{\psi,\chi}^Y(\sigma)$ for the Witt tower of $Y$ then is defined to be the minimal dimension of $Y_a$ for which the theta lift to $G(Y_a)$ does not vanish. For the lowest occurrence index $\LO_{\psi,\chi} (\sigma)$, we further minimise across all Witt towers that are compatible with $\chi$. The subscript $\psi$ denotes the additive character that is used in defining theta lifts.
See Sec.~\ref{sec:theta-corr-unit} for more details.

The other main ingredient is the Eisenstein series associated to the cuspidal datum $\chi\otimes\sigma$. Form the skew-Hermitian space $X_1$ by adjoining a copy of the hyperbolic plane  to $X$. We polarise  the hyperbolic plane as $\ell_1^+\oplus\ell_1^-$. Let $Q_1$ be the parabolic subgroup of $G(X_1)$ that stabilises the isotropic line $\ell_1^-$. Let $M_1$ be its standard Levi subgroup and $N_1$ its unipotent radical.  Let $\cA_1(s,\chi,\sigma)$ denote the space of automorphic forms on $N_1(\AA)M_1(F)\lmod G(X_1)(\AA)$ that transform according to $\chi|\ |^s\otimes\sigma$ on $M_1(F)\lmod M_1(\AA)$.  See Sec.~\ref{sec:eisenstein-series} for more details. Then we form the Eisenstein $E(g,f_s)$ for each section $f_s$ in $\cA_1(s,\chi,\sigma)$.

We can now state our main theorem:
\begin{thm}[{Cor.~\ref{cor:Eis-pole=LO}}]
Let  $\sigma$ be a cuspidal representation of $G(X)(\AA)$. Then  $s_0$ is the maximal pole of the Eisenstein series $E(g,f_s)$ for  $f_s$ running over $\cA_1(s,\chi,\sigma)$ if and only if $\LO_{\psi,\chi} (\sigma) =  \dim X + 1 - 2 s_0$.
\end{thm}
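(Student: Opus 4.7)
The plan is to prove both directions of the equivalence by computing period integrals of the truncated Eisenstein series against theta kernels, following the blueprint M\oe glin introduced for the orthogonal case \cite{MR1473165} and refined in \cite{MR2540878,wu:_period-metaplectic}. The key bridge will be an identity that translates the pole behaviour of $E(g,f_s)$ at a candidate point $s_0$ into vanishing or non-vanishing of theta lifts of $\sigma$ to unitary groups $G(Y)$ associated to Hermitian spaces of dimension $m=\dim X+1-2s_0$ compatible with $\chi$.

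Concretely, for each such $Y$, a Schwartz function $\varphi$, and a cusp form $\phi\in\sigma$, I would introduce a mixed period integral on $G(X)(F)\lmod G(X)(\AA)$ pairing $\Lambda^T E(g,f_s)$ with a theta-type kernel attached to the dual pair $(G(X),G(Y))$, where $\Lambda^T$ is the Arthur truncation needed to ensure absolute convergence near the singularities of $E$. The main technical step is the explicit computation of this period: after unfolding $E(g,f_s)$ via the double coset decomposition $Q_1(F)\lmod G(X_1)(F)/G(X)(F)$, the open orbit contribution should take the shape of an $L$-value for $L(s,\sigma\times\chi^{-1})$ multiplied by the inner product of $\sigma$ with the theta lift from $G(Y)$, while the remaining orbits and truncation corrections are to be shown holomorphic at $s_0$ and to contribute at most a polynomial in $T$ that does not affect the leading singularity. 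The orbit calculus and local intertwining computations were largely set up in \cite{MR3435720}; what remains is the global residue analysis at $s_0$ and the invocation of the unitary regularised Siegel-Weil formula of \cite{MR3595433,MR3279536} to identify regularised theta integrals with residues of Siegel Eisenstein series.

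Given this identity, both implications follow. If $s_0$ is the maximal pole, the vanishing of residues at all $s>s_0$ forces the theta lifts to all compatible Hermitian spaces of dimension smaller than $\dim X+1-2s_0$ to vanish, while the non-trivial residue at $s_0$ produces a non-vanishing theta lift to some $Y$ with $\dim Y=\dim X+1-2s_0$; this yields $\LO_{\psi,\chi}(\sigma)=\dim X+1-2s_0$. Conversely, a non-vanishing theta lift at the lowest occurrence $m$ produces a pole of $E(g,f_s)$ at $s_0=(\dim X+1-m)/2$ via the same identity, and the minimality of $m$ together with the tower property for theta lifts rules out any strictly larger pole. The principal obstacle is the truncated period computation itself: isolating the $T$-independent leading term, disposing of the non-open orbit contributions, and handling the ramified local places so that the equivalence is exact, which is precisely the advantage of the Eisenstein-series formulation over a direct analysis of $L(s,\sigma\times\chi^{-1})$.
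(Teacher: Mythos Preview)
Your overall framework---pair the truncated Eisenstein series with a theta function, unfold along $Q_1$-orbits, and read off the pole from the main orbit---is the right one, but two specific choices in your plan do not match what actually works and would block the argument.

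First, the domain of integration. You integrate over $[G(X)]$ and unfold via $Q_1\lmod G(X_1)/G(X)$. In the paper the period is taken over $[G(Z_1)]$, where $Z$ is a non-degenerate subspace of $X$ of codimension $r$ equal to the defect in the first-occurrence index $\FO^Y_{\psi,\chi}(\sigma)=\dim Y+2r$. The orbit space $Q_1\lmod G(X_1)/G(Z_1)$ then has four types $\Omega_{0,1},\Omega_{1,0},\Omega_{1,1},\Omega_{2,2}$, and the whole mechanism hinges on the codimension of $Z$: the three ``bad'' orbit types contribute period integrals of $\sigma\otimes\overline{\Theta_{\psi,\chi,\chi_2,X,Y}}$ over subgroups $J(Z',L')$ with $\dim Z'-\dim L'>\dim Z$, which vanish precisely because the first occurrence has not yet happened at that depth (Prop.~\ref{prop:period-X->Y}\eqref{item:1}). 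With $Z=X$ this cancellation mechanism is unavailable, and your ``remaining orbits are holomorphic'' step has no reason to hold.

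Second, the main term. The surviving orbit $\Omega_{0,1}$ does not produce an $L$-value; it produces the distinction period
\[
\int_{K_{G(Z_1)}}\int_{[G(Z)]}\overline{\theta_{\psi,\chi,\chi_2,X,Y}(h,1,\Phi'_k)}\,f_s(hk)\,dh\,dk
\]
multiplied by $\vol(E^\times\lmod\AA_E^1)\,c^{s-s_0}/(s-s_0)$. Its residue at $s_0=\tfrac12(\dim X-(\dim Y+2r)+1)$ is non-zero exactly when $\sigma\otimes\overline{\Theta_{\psi,\chi,\chi_2,X,Y}}$ is $G(Z)$-distinguished, which is guaranteed at first occurrence by the non-singular Fourier coefficient argument (Prop.~\ref{prop:period-X->Y}\eqref{item:3}). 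No Siegel--Weil formula and no analysis of $L(s,\sigma\times\chi^{-1})$ enters here; invoking them would reintroduce exactly the ramified-place ambiguities you correctly want to avoid.

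Once Cor.~\ref{cor:FO->Eis-pole} (first occurrence $\Rightarrow$ pole at $s_0$) is established this way, the corollary itself is a two-line deduction: combine it with the already-known inequality $\LO_{\psi,\chi}(\sigma)\le\dim X+1-2s_0$ from Thm.~\ref{thm:Eis-pole-LO} and use maximality of $s_0$ in both directions. Your final paragraph has the right logical shape, but it rests on an identity you will not obtain from the setup you describe.
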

This theorem strengthens Theorem~3.1 in \cite{MR3435720} where we  achieved the inequality that $\LO_{\psi,\chi} (\sigma)\le \dim X + 1 - 2 s_0$.  In this paper, we derive the other inequality  from an intricate result (Cor.~\ref{cor:FO->period-residue-Eis})  on the relation between period integrals of residues of Eisenstein series twisted by a theta series and the first occurrence index $\FO_{\psi,\chi}^Y(\sigma)$.  We note that the first occurrence is a finer invariant than the lowest occurrence just as non-vanishing of period integrals of residues is a finer invariant than having a pole.

More precisely, the period integrals involved are of the form
\begin{align*}
  \int_{[G(Z_1)]} \cE_{-1}(g,f_{s_1}) \overline {\theta_{\psi,\chi,\chi_2,X_1,Y} (g,1,\Phi)} dg
\end{align*}
where $\cE_{-1}(g,f_{s_1})$ denotes the residue of $E(g,f_s)$ at $s=s_1$ and $Z_1$ is a non-degenerate skew-Hermitian subspace of $X_1$. We refer the reader to Sec.~\ref{sec:theta-corr-unit} for unexplained notation.
We can show that if we know the first occurrence index for the Witt tower of $Y$, then the period integral is non-vanishing for appropriate $s_1$ and $Z_1$. This implies the other inequality.

As the bulk of the article is for computing the period integral, we describe our approach here.
We start with a period integral of a truncated Eisenstein series and  cut it into many pieces according to orbits in a certain flag variety. Each piece turns out to have a period integral of the form
\begin{align*}
  \int_{[J]} f(g)\overline{\theta_{\psi,\chi,\chi_2,X,Y}(g,1,\Phi)}dg,
\end{align*}
as an inner integral,
where $f\in\sigma$ and $J$ is a  subgroup of $G(X)$. 
We can show that most of the pieces must vanish given our condition on $\FO_{\psi,\chi}^Y(\sigma)$. Finally we show that the non-vanishing of certain Fourier coefficients of the  theta lifts of $\sigma$ implies that the remaining pieces do not vanish.
The computation is justified by showing absolute convergence through  delicate analysis. 

We remark  that each case of the dual reductive pairs has its own peculiarity and presents its own challenge. In the symplectic/metaplectic case, our method led to  period integrals on Jacobi groups which account for the lack of `odd symplectic groups', while in the orthogonal and unitary cases we get  period integrals on certain homogeneous spaces  to account for the fact that orthogonal groups and unitary groups may not be quasi-split. We are able to prove some very technical results on absolute convergence of such  integrals, which may be useful in future studies involving these types of period integrals. Our arguments are greatly inspired by those in  \cite{MR2540878} which treats the orthogonal case. However as \cite{MR2540878} did not provide sufficient arguments for showing 
the absolute convergence of the period integrals on certain homogeneous spaces mentioned above, we worked out some new arguments and they  can be adapted to the orthogonal case easily. (See Sec.~\ref{sec:absol-conv-i_omega22}.)

According to the $(\tau,b)$-theory proposed in \cite{jiang14:_autom_integ_trans_class_group_i}, theta series are the kernel functions  which give the endoscopic lift between the Arthur packet attached to the parameter $\Psi_2$ and that attached to the parameter $\Psi$ where $\Psi=\tau\boxtimes \nu_b \boxplus \Psi_2$ in the special case of $\tau=\chi$ of \cite[Principle 1.2]{jiang14:_autom_integ_trans_class_group_i}. In general, $\tau$ can be a conjugate self-dual unitary cuspidal automorphic representation of $\GL_n(\AA_E)$. We refer the reader to \cite{MR3135650,MR3338302,kaletha:_endos_class_repres} for  Arthur's theory of endoscopy. Our future work will investigate more general kernel functions and the properties of the  lifts they afford.

We give an overview of the structure of this article. In Sec.~\ref{sec:notation-preliminary} we set up some notation for the Eisenstein series and theta lifts. In Sec.~\ref{sec:summary-results-from-part-1}, we recall a result, which we will strengthen, from \cite{MR3435720}. We also recall a result 
from \cite{MR3071813} on theta lift. As the definition of theta lift there did not use complex conjugation, we  rephrase  the results to conform to our present definition.  Sec.~\ref{sec:peri-integr-main} contains the statements of the main results: relation of period integrals and first occurrence indices. We show how this gives us a way to strengthen \cite[Theorem 3.1]{MR3435720}. We define period integrals of Eisenstein series (twisted by certain theta functions) on subgroups of $G(X_1)$ and apply the Arthur truncation method to regularise the period integrals. We cut the period integrals into many pieces according to orbits in a certain flag variety. In Sec.~\ref{sec:comp-integrals}, we evaluate each piece and in Sec.~\ref{sec:absol-conv-integr}, we justify all the changes of order of integration and summation by showing absolute convergence of the pieces.

 \section*{Acknowledgement}
\label{sec:acknowledgement}
The author would like to thank her post-doc mentor, Professor Dihua Jiang, for introducing her to the beautiful world of automorphic representations and for the many  suggestions of which she generally only realises the relevance and usefulness later. The author would like to thank her former colleague, Ronggang Shi, at SCMS, for encouraging her to keep tackling the issue of absolute convergence. The author is grateful to the University of Melbourne for promptly supplying her with a nice new laptop when her old laptop stopped working during lock-down. The author would also like to thank the referee for the careful reading and the many helpful suggestions that make the article more accurate.

\section{Notation and Preliminary}
\label{sec:notation-preliminary}

We will use similar notation as in our previous paper\cite{MR3435720}. Let $F$ be a number field and $E$ a quadratic field extension of $F$.  Fix a traceless element $\daleth$ in $E^\times$. Thus if $c$ denotes the non-trivial element in the Galois group of $E/F$, then $\daleth^c = -\daleth$. Let $\AA=\AA_F$ be the ring of adeles of $F$ and $\AA_E$ that of $E$. Let $\varepsilon_{E/F}$ be the quadratic character of $\AA_F^\times$ that is associated to the quadratic extension $E/F$ via the class field theory.

Let $X$ be a skew-Hermitian space  and $Y$ a Hermitian space over $E$. Then we have the unitary groups $G
(X)$ and $G (Y)$  defined over $F$ and they form a dual reductive pair. Fix a pair of automorphic characters
$(\chi_1,\chi_2)$ of $\AA_E^\times$ such that $\chi_1|_{\AA_F^\times} =
\varepsilon_{E/F}^{\dim Y}$ and $\chi_2|_{\AA_F^\times} =
\varepsilon_{E/F}^{\dim X}$. As the parity of the exponent plays a significant role in the results below, we set $\epsilon_{\chi}=a$ for an automorphic character $\chi$ of $\AA_E^\times$, if $\chi|_{\AA_F^\times} = \varepsilon_{E/F}^a$ for $a=0$ or $1$. We also fix a non-trivial additive character $\psi$ of $F\lmod \AA$ for use in the Weil representation. We get an additive character of $E\lmod \AA_E$ given by $\psi_E := \psi\circ \half\tr_{E/F}$.

For a non-negative integer $a$, set $X_a=\ell_a^+\oplus X \oplus \ell_a^-$ to be the skew-Hermitian space formed by adjoining a $2a$-dimensional polarised split skew-Hermitian space $\ell_a^+\oplus \ell_a^-$ to $X$. If $X$ can be decomposed as $\ell_a^+\oplus X' \oplus \ell_a^-$ for some skew-Hermitian subspace $X'$ of $X$, then we write $X_{-a}$ for $X'$. We have analogous constructions for the Hermitian space $Y$. In this paper, $a$ is usually $1$ and we let $e_1^+$ (resp. $e_1^-$) be a basis element for $\ell_1^+$ (resp. $\ell_1^-$)  such that $\form{e_1^+}{e_1^-} =1$. The angled brackets denote the pairings on the (skew-)Hermitian spaces. Sometimes subscripts are added to indicate which space we are concerned with.

Let $Q_a$ be the parabolic subgroup of $G(X_a)$ that stabilises $\ell_a^-$. Let $M_a$ be its standard Levi subgroup and $N_a$  its unipotent radical. We have
\begin{equation*}
  M_a \isom R_{E/F}\GL_a \times G(X)
\end{equation*}
where $R_{E/F}$ denotes the restriction of scalars of Weil.  For $t\in\RGL_a$, set $m_a(t)$ 
to be the corresponding element in $M_a$. Let $\rho_{Q_a}$ denote the half sum of positive roots in $N_a$. It is an element in $\fa_{M_a}^*$, which is a one-dimensional real vector space in the notation of \cite{MR518111} (see also \cite{MR1361168}). Under the Shahidi normalisation\cite{MR2683009}, we may view it as the (half) integer $(\dim X + a)/2$. For more details, see \cite[Sec.~2.2]{MR3435720}.

Fix a good maximal compact subgroup $K_{G(X_a)}$ for $G(X_a)(\AA)$ such that the Iwasawa decomposition
\begin{equation*}
  G(X_a)(\AA) = Q_a(\AA) K_{G(X_a)}
\end{equation*}
holds.
For $g\in G(X_a)(\AA)$, we  decompose it according to the Iwasawa decomposition $g=n_g m_a(t_g) h_g k_g$ for some $n_g\in N_a(\AA)$, $t_g\in\RGL_a(\AA)$, $h_g\in G(X)(\AA)$ and $k_g\in K_{G(X_a)}$. 

\subsection{Eisenstein Series}
\label{sec:eisenstein-series}

Write $\cA_\cusp(G(X))$ for the set of irreducible cuspidal automorphic representations of $G(X)(\AA)$. Let $\chi$ be a character of $E^\times\lmod \AA_E^\times$. Most often $\chi$ will be equal to $\chi_1$. Let $\cA_a(s,\chi,\sigma)$ be the space of smooth right $K_{G(X_a)}$-finite  functions $f_s$ on $N_a(\AA)M_a(F)\lmod G(X_a)(\AA)$ such that
\begin{enumerate}
\item for any $x\in \RGL_a (\A)$ and $g\ \in G (X_a) (\A)$,
  \begin{equation*}
    f_s (m_a (x) g) = \chi (\det x) |\det x|_{\A_E}^{s+\rho_{Q_a}} f_s (g);
  \end{equation*}
\item for any fixed $k\in K_{G(X_a)}$ and for any  $h \in G (X) (\A)$, the function
  \begin{equation*}
    h \mapsto f_s ( h k)
  \end{equation*}
is a smooth right $K_{G(X_a)}\cap M_a(\AA)^1$-finite vector in the space of $\sigma$.
\end{enumerate}
The superscript $1$ means taking the intersection of the kernels of characters as in \cite[I.1.4]{MR1361168}.

Given $f_{s_0}\in \cA_a(s_0,\chi,\sigma)$, we can extend it to a section $f_s\in \cA_a(s,\chi,\sigma)$ by setting
\begin{equation*}
  f_s(g) = |\det (t_g)|^{s-s_0} f_{s_0}(g)
\end{equation*}
where $t_g\in\RGL_a(\AA)$ appears in the Iwasawa decomposition of $g$. We note that $t_g$ is not uniquely determined in the Iwasawa decomposition, but the expression $|\det (t_g)|$ is independent of the choice.
We form the Eisenstein series on $G(X_a)(\AA)$ associated to $f_s$ as follows:
\begin{equation*}
  E(g,f_s) := \sum_{\gamma\in Q_a(F)\lmod G(X_a)(F)} f_s(\gamma g).
\end{equation*}
By Langlands' theory of Eisenstein series, it is absolutely convergent
for $\Re (s)> \rho_{Q_a}$ and has meromorphic continuation to
the whole $s$-plane, with finitely many poles in the half plane $\Re
(s) >0$, which are all real in our case (\cite[IV.1]{MR1361168}).  Let $\cP_a(\chi,\sigma)$ be the set of positive poles of these Eisenstein series. Some results on poles of these Eisenstein series were derived in \cite{MR3435720}.

\subsection{Theta correspondence, first occurrence and lowest occurrence}
\label{sec:theta-corr-unit}

The main difference to the definitions we used in \cite{MR3435720} is that we use the complex conjugate version of theta lift. In this way, we can unify the results with the metaplectic case better. This will also make our result\cite{MR3071813} on the involutive property of theta lifts look more natural, as we will not need to switch between using $\psi$ and $\psi^{-1}$ for the additive character that is used in the Weil representations.

 Along with the additive character $\psi$,
$\chi_1$ (resp. $\chi_2$) determines a splitting of the metaplectic group $\Mp(R_{E/F} ( Y\otimes_EX) ) (\A)$ 
over $G (X) (\A)$ (resp. $G (Y)(\A)$). Then we can define the Weil representation
$\omega_{\psi,\chi_1,\chi_2}$ of $G (X) (\A) \times G (Y) (\A)$ on the 
Schwartz space $\cS (V (\A))$ where
$V$ is a maximal isotropic subspace of
$R_{E/F}(Y\otimes_EX)$.  When we do not want to emphasise which maximal isotropic subspace we use, we will write $\cS_{X,Y}(\AA)$ for $\cS (V (\A))$. See \cite{MR1286835} for more details.

If we change $\chi_2$ to another eligible character $\chi_2'$,
then (c.f. \cite [eq.~(1.8)] {MR1327161} and \cite [eq.~(2.9)] {MR3071813})
\begin{equation}
  \label{eq:change-chi2}
  \omega_{\psi,\chi_1,\chi_2'} (g,h) = \nu_{\chi_2'\chi_2^{-1}} (\det h)\omega_{\psi,\chi_1,\chi_2} (g,h)
\end{equation}
for $(g,h) \in  G (X) (\A) \times G (Y) (\A)$. The character $\nu_{\chi_2'\chi_2^{-1}}$ is defined in \eqref{eq:nu_chi}. A similar result holds if we change $\chi_1$. 
Assume that $\epsilon_\chi=0$. We may associate to $\chi$ a character $\nu_\chi$ which at each local place $v$ of $E$ is a character of $E_v^1$ defined by
\begin{equation}\label{eq:nu_chi}
  \nu_{\chi, v} (x/\bar {x}) = \chi_v (x).
\end{equation}
In other words,  $\chi$ is the base change of $\nu_{\chi}$.
  
For $\phi\in \cS (V (\A))$ we form the theta series
\begin{equation*}
  \theta_{\psi,\chi_1,\chi_2,X,Y} (g,h,\phi) :=
\sum_{v\in V (F)} \omega_{\psi,\chi_1,\chi_2} (g,h)\phi (v)
\end{equation*}
for $g\in G (X) (\A)$ and $h\in G (Y) (\A)$.
For $\sigma\in\cA_\cusp(G(Y))$,  $f\in\sigma$ and $\phi\in \cS (V (\A))$, we define
\begin{equation*}
  \theta_{\psi,\chi_1,\chi_2,Y}^X (g,f,\phi) := \int_{[G (Y)]} \theta_{\psi,\chi_1,\chi_2,X,Y} (g,h,\phi) \overline {f (h)} dh.
\end{equation*}
Then the  theta lift $\theta_{\psi,\chi_1,\chi_2, Y}^{X} (\sigma)$  is defined to be the span of all such 
$\theta_{\psi,\chi_1,\chi_2,Y}^X (g,f,\phi)$'s. 
Similarly if $\sigma\in\cA_\cusp(G(X))$, $\theta_{\psi,\chi_1,\chi_2, X}^{Y} (\sigma)$ is defined to be the span of
\begin{equation*}
  \theta_{\psi,\chi_1,\chi_2,X}^Y (h,f,\phi) := \int_{[G (X)]} \theta_{\psi,\chi_1,\chi_2,X,Y} (g,h,\phi) \overline {f (g)} dg
\end{equation*}
for all  $f\in\sigma$ and $\phi\in \cS (V (\A))$. We note that the choice of $V$ is not important, as the Weil representations on $\cS(V(\AA))$ for different choices of $V$ are related via the Fourier transform.

Now we return to the case $\sigma\in\cA_\cusp(G(X))$. Fix an anisotropic Hermitian space $Y$ (so that it lies at the bottom of its Witt tower). Define the first occurrence index of $\sigma$ with respect to $\chi$ in the Witt tower associated to $Y$:
\begin{equation*} 
  \FO_{\psi,\chi}^Y(\sigma) := \min \{\dim Y_a | \theta_{\psi,\chi,\chi_2, X}^{Y_a} (\sigma) \neq 0 \}.
\end{equation*}
Of course, $\chi$ should be compatible with $Y$, i.e., $\epsilon_\chi \equiv \dim Y \pmod 2$. The first occurrence index does not depend on the choice of $\chi_2$.
Define the lowest occurrence index of $\sigma$ with respect to $\chi$:
\begin{equation*}
  \LO_{\psi,\chi}(\sigma) := \min \{\FO_{\psi,\chi}^Y(\sigma) |   \epsilon_\chi \equiv \dim Y \pmod 2\}.
\end{equation*}
This means we further minimise across all the Witt towers that are compatible with $\chi$.

\section{Some relevant results on theta lifts}
\label{sec:summary-results-from-part-1}

We recall a result in \cite{MR3435720} which we aim to strengthen in this paper. We  take this chance to
correct an error in the statements which have some extraneous character twists. We also rephrase a result from \cite{MR3071813}, which figured in the proof of \cite[Theorem~3.1]{MR3435720},  using our current definition of theta lift, namely the conjugate version, for future reference. Note that, due to the presence of complex conjugation, the theta lifts  use the same additive character in both directions in our rephrased version.

\begin{thm}[{\cite[Theorem~5.1]{MR3071813}}] \label{thm:theta_back_and_forth}
  Let $\sigma\in\cA_\cusp(G(X))$ and let $Y$ be a Hermitian space. Assume that $\theta_{\psi,\chi_1,\chi_2,X}^Y(\sigma)$ is non-vanishing and
  cuspidal. Then
  \begin{enumerate}
  \item $\theta_{\psi,\chi_1,\chi_2,Y}^X(\chi_2\cdot\theta_{\psi,\chi_1,\chi_2,X}^Y(\sigma))=\chi_1\sigma$;
  \item $\theta_{\psi,\chi_1,\chi_2,Y}^{X_a}(\chi_2\cdot\theta_{\psi,\chi_1,\chi_2,X}^Y(\sigma))$ is
    orthogonal to all cusp forms on $G(X_a) (\AA$) for $a>0$;
  \item $\theta_{\psi,\chi_1,\chi_2,Y}^{X_{-b}}(\chi_2\cdot\theta_{\psi,\chi_1,\chi_2,X}^Y(\sigma))=0$ for $b>0$.
  \end{enumerate}
\end{thm}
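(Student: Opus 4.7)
The plan is to set $\pi := \theta_{\psi,\chi_1,\chi_2,X}^Y(\sigma)$, which by hypothesis is a non-zero cuspidal automorphic representation of $G(Y)(\AA)$, and to reduce each of the three claims to an iterated double theta integral that can be unfolded by interchanging the order of integration. The $\chi_2$-twist in $\chi_2\cdot\pi$ is absorbed at the outset using \eqref{eq:change-chi2}: multiplication of automorphic forms on $G(Y)(\AA)$ by $\chi_2(\det)$ corresponds, up to the character $\nu$ on $G(Y)$, to changing the splitting character on the $G(Y)$-side of the Weil representation, so $\chi_2\pi$ may be treated as a theta lift with respect to a shifted pair of splitting characters. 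This same bookkeeping is what forces a $\chi_1$-twist (rather than no twist) to appear in statement (1).

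For statement (3), I would apply the see-saw identity with $G(X_{-b})\times G(\ell_b^+\oplus\ell_b^-)\subset G(X)$ on the skew-Hermitian side and $G(Y)\times G(Y)\subset G(Y\oplus(-Y))$ on the doubled Hermitian side. Unfolding the inner product
\begin{equation*}
\langle \theta_{\psi,\chi_1,\chi_2,Y}^{X_{-b}}(\chi_2\pi),\ \theta_{\psi,\chi_1,\chi_2,Y}^{X_{-b}}(\chi_2\pi)\rangle
\end{equation*}
via see-saw produces an inner integral over $[G(Y)]$ of a product of two theta kernels, which collapses (after applying the definition of $\pi$) to a pairing of $\sigma$ with its translates, integrated against a theta kernel attached to the hyperbolic summand $\ell_b^+\oplus\ell_b^-\subset X$. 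Cuspidality of $\sigma$ on $[G(X)]$ and the strict positivity of $b$ force this matrix-coefficient pairing to vanish, so the whole lift vanishes; this is the standard Rallis tower obstruction.

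For statements (1) and (2), I would take a cusp form $f'$ on $G(X_a)(\AA)$ and, after interchanging the outer $[G(X_a)]$-integration with the $[G(Y)]$-integration defining $\pi$, arrive at
\begin{equation*}
\langle \theta_{\psi,\chi_1,\chi_2,Y}^{X_a}(\chi_2\pi),\ f'\rangle = \int_{[G(Y)]} \chi_2(\det h)^{-1}\overline{\pi(h)}\,\theta_{\psi,\chi_1,\chi_2,X_a}^{Y}(h,f',\phi)\,dh.
\end{equation*}
For $a>0$, the inner theta lift of $f'$ lies strictly above $X$ in the skew-Hermitian tower, so a see-saw argument together with Rallis' tower property (exactly as for (3), but on the other side of the see-saw) forces its pairing with $\pi$ to vanish, giving (2). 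For $a=0$, the same identity pairs $\pi$ against $\theta_{\psi,\chi_1,\chi_2,X}^{Y}(f')$; re-expanding $\pi$ using its own definition and applying Schur-type orthogonality inside $\sigma$ identifies the result, after accounting for the splitting correction from \eqref{eq:change-chi2}, with $\langle\chi_1\sigma,f'\rangle$, and non-vanishing of the output is guaranteed by the hypothesis that $\pi\neq 0$, proving (1).

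The main obstacle will be justifying the interchange of the two adelic integrations and tracking the splitting characters $\chi_1,\chi_2$ through \eqref{eq:change-chi2} precisely. Cuspidality of both $\sigma$ and $\pi$ provides rapid decay on $[G(X)]$ and $[G(Y)]$, which should suffice for Fubini once one controls the theta kernel itself in a mixed model by estimating the Schwartz function. Pinning down the exact twist $\chi_1$ in (1), rather than some other character, is the most delicate point and is precisely what distinguishes this conjugate-version reformulation from the statement originally proved in \cite{MR3071813}.
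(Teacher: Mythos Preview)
The paper does not prove this theorem at all: it is quoted from \cite[Theorem~5.1]{MR3071813} and merely restated in the conjugate convention, with the only work being the bookkeeping that produces the twists $\chi_1,\chi_2$ in the present notation. There is therefore no proof in the paper to compare your proposal against.

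As for the proposal itself, the ingredients are right but the architecture is off in two places. First, the see-saw you write down for (3), with $G(X_{-b})\times G(\ell_b^+\oplus\ell_b^-)\subset G(X)$ on one side and $G(Y)\times G(Y)\subset G(Y\oplus(-Y))$ on the other, is not the see-saw that computes $\|\theta_{\psi,\chi_1,\chi_2,Y}^{X_{-b}}(\chi_2\pi)\|^2$; that Rallis inner product uses the diagonal embedding of $G(X_{-b})$ into $G(X_{-b})\times G(X_{-b})$, and your pair does not visibly produce the vanishing you claim. Second, and more structurally, (2) and (3) are most cleanly deduced \emph{from} (1) via Rallis' tower property rather than proved independently: once $\theta_{\psi,\chi_1,\chi_2,Y}^{X}(\chi_2\pi)=\chi_1\sigma$ is known to be cuspidal, any nonvanishing of $\theta_{\psi,\chi_1,\chi_2,Y}^{X_{-b}}(\chi_2\pi)$ for $b>0$ would force the lift at $X$ to be non-cuspidal, a contradiction, and the lifts at $X_a$ for $a>0$ are then automatically non-cuspidal, giving (2).

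For (1) your unfolding correctly identifies the cuspidal projection of $\theta_{\psi,\chi_1,\chi_2,Y}^{X}(\chi_2\pi)$ as $\chi_1\sigma$, but it does not by itself show that the lift \emph{is} cuspidal; ``Schur-type orthogonality'' only controls pairings against cusp forms. The missing step is a constant-term computation: the constant term of $\theta_{\psi,\chi_1,\chi_2,Y}^{X}(\chi_2\pi)$ along a maximal parabolic of $G(X)$ is governed by theta lifts of $\sigma$ to Hermitian spaces strictly smaller than $Y$, and these vanish precisely because the hypothesis that $\pi=\theta_{\psi,\chi_1,\chi_2,X}^{Y}(\sigma)$ is cuspidal says $Y$ is already the first occurrence for $\sigma$. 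That closes the circle and is what the argument in \cite{MR3071813} actually does.
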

To conserve space, we wrote $\chi$ for $\chi\circ\det$.

\begin{thm}[{\cite[Thm.~3.1, Prop.~2.8]{MR3435720}}]\label{thm:Eis-pole-LO}
For $\sigma\in\cA_\cusp(G(X))$, if $s_0$ is the maximal element of $\cP_1(\chi,\sigma)$, then
  \begin{enumerate}
  \item $s_0 = \frac{1}{2} (\dim X+1-\epsilon_\chi)-j$ for
    some integer $j$ such that $0 \le j < \frac{1}{2} (\dim X+1-\epsilon_\chi)$ and $s_0=
    \frac{1}{2} (\dim X+1)$ can be achieved only when  $\epsilon_\chi=0$, $X$ is anisotropic and $\sigma=\nu_\chi\circ \det$;
  \item \label{item:LO-pole-ineq}$\LO_{\psi,\chi} (\sigma) \le  2j+\epsilon_\chi = \dim X + 1 - 2 s_0$;
    \item $2j+\epsilon_\chi = \dim X + 1 - 2 s_0 \ge r_X$ where $r_X$ is the Witt index of $X$.
  \end{enumerate}
\end{thm}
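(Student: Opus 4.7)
My plan is to analyze $E(g,f_s)$ through its constant term along $Q_1$, which by Langlands' theory equals $f_s + M(s)f_s$, where $M(s):\cA_1(s,\chi,\sigma)\to\cA_1(-s,\chi,\sigma)$ is the standard intertwining operator associated to the longest Weyl element (using $\chi^c=\chi^{-1}$ since $\chi$ is conjugate self-dual). For generic $f_s$, poles of $E(g,f_s)$ coincide with poles of $M(s)$, so the task reduces to identifying the poles of $M(s)$ on the positive real axis.

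For part (1), I would apply the Langlands-Shahidi normalization to factor $M(s) = r(s)N(s)$, where $r(s)$ is a ratio of global $L$-functions and $N(s)$ is the normalized intertwining operator. For the parabolic $Q_1$, $r(s)$ is built from the Rankin-Selberg type $L$-function $L(s,\sigma\times\chi^{-1})$ and an abelian $L$-function $L(2s,\chi|_{\AA_F^\times})$; the latter is $\zeta_F(2s)$ when $\epsilon_\chi=0$ (which has a pole at $s=1/2$) and the entire $L$-function of $\varepsilon_{E/F}$ when $\epsilon_\chi=1$. The analytic structure of these factors places the possible right half-plane poles exactly at $s=(\dim X+1-\epsilon_\chi)/2-j$ for integers $j\ge 0$. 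The extremal location $s_0=(\dim X+1)/2$ requires both $\epsilon_\chi=0$ and a simultaneous pole of $\zeta_F(2s)$ together with $L(s,\sigma\times\chi^{-1})$, forcing $\sigma=\nu_\chi\circ\det$ by an analysis of the poles of the standard $L$-function; for this character to be cuspidal on $G(X)(\AA)$, the group $G(X)$ must have no proper $F$-parabolic, i.e., $X$ is anisotropic.

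For part (2), I would combine the pole at $s_0$ with a regularized Siegel-Weil/theta-integral argument: a non-vanishing residue $\cE_{-1}(g,f_{s_0})$ can be paired against a theta kernel on $G(X_1)\times G(Y_a)$ for an appropriate $Y_a$ in the Witt tower compatible with $\chi$ of bottom dimension $2j+\epsilon_\chi$; the pairing is non-zero by the first-term identity of the extended Siegel-Weil formula, forcing $\theta_{\psi,\chi,\chi_2,X}^{Y_a}(\sigma)\neq 0$ and hence $\LO_{\psi,\chi}(\sigma)\le 2j+\epsilon_\chi$. For part (3), I would invoke Casselman's square-integrability criterion applied to the residual representation: if $s_0>(\dim X+1-r_X)/2$, the exponents of $\cE_{-1}(g,f_{s_0})$ along the maximal parabolic of $G(X_1)$ stabilizing a maximal isotropic flag would fail to lie strictly inside the negative Weyl chamber, contradicting square-integrability of the residual representation, so no such pole can exist.

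The main obstacle I anticipate is verifying that the normalized operator $N(s)$ does not cancel the poles produced by $r(s)$: at archimedean and ramified places, this requires showing that local normalized intertwining operators are non-zero on the relevant line, a delicate piece of local harmonic analysis that typically rests on Plancherel-measure considerations together with M\oe glin's reducibility criteria for induced representations of classical groups. Tracking the precise interplay between the local $r$-factors and the global normalizing factors, especially when proving the irreducibility needed for the extremal case in part (1), is where most of the technical difficulty will sit.
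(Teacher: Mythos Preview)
Your outline for part~(1) via the constant term and Langlands--Shahidi normalization is broadly in line with how \cite{MR3435720} controls the possible pole locations, and your square-integrability argument for part~(3) is a reasonable alternative route. The substantive gap is in part~(2).

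Your proposed mechanism---take an arbitrary non-vanishing residue $\cE_{-1}(g,f_{s_0})$ on $G(X_1)$, pair it against a theta kernel for $G(X_1)\times G(Y_a)$, and invoke a Siegel--Weil first-term identity to force non-vanishing of the pairing---does not go through as stated. The regularized Siegel--Weil formula identifies residues of \emph{Siegel} Eisenstein series with theta integrals; it says nothing directly about the pairing of the residue of the non-Siegel Eisenstein series $E(g,f_s)$ (attached to the cuspidal datum $\chi\otimes\sigma$) with a theta function. You have no a~priori reason this pairing is non-zero, and you have not explained how non-vanishing of such a pairing on $G(X_1)$ would descend to non-vanishing of $\theta_{\psi,\chi,\chi_2,X}^{Y_a}(\sigma)$ on $G(X)$.

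What the paper actually records (correcting \cite[\S4]{MR3435720}) is the opposite construction: one builds \emph{specific} sections $\Phi_{f,s}\in\cA_a(s,\chi,\sigma)$ from $f\in\sigma$ and Schwartz data for the pair $(G(X_a),G(Y))$ via the doubling method, so that $E(g_a,\Phi_{f,s})$ itself unfolds to an expression involving a Siegel Eisenstein series on the doubled group. The regularized Siegel--Weil formula is then applied to \emph{that} Siegel Eisenstein series, yielding the explicit residue
\[
  \int_{[G(Y)]} \chi_2^{-1}(\det h)\,\theta_{\psi,\chi,\chi_2,X_a,Y}(g_a,h,\phi_Y^{(1)})\int_{[G(X)]}\overline{\theta_{\psi,\chi,\chi_2,X,Y}(g,h,\overline{\phi_Y^{(2)}})}\,\chi(\det g)f(g)\,dg\,dh,
\]
whose inner integral is precisely (the conjugate of) the theta lift of an element of $\sigma$ to $G(Y)$. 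The logical flow is therefore: the Siegel--Weil sections, as $Y$ ranges over Hermitian spaces of the fixed dimension $2j+\epsilon_\chi$ and the Schwartz data vary, span $\cA_a(s,\chi,\sigma)$; hence a pole at $s_0$ for \emph{some} section forces a pole for some $\Phi_{f,s}$; and non-vanishing of that residue then forces $\theta_{\psi,\chi,\chi_2,X}^Y(\sigma)\neq 0$ for that particular $Y$, giving $\LO_{\psi,\chi}(\sigma)\le 2j+\epsilon_\chi$. The spanning statement and the doubling construction are the missing ingredients in your sketch; without them the Siegel--Weil formula has nothing to act on.
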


We say a few words regarding the proof as we missed some characters in some of the expressions in \cite{MR3435720}. We took $f\in\chi^{-1}\sigma$ and formed the section $\Phi_{f,s}$ which lies in $\cA_a(s,\chi,\sigma)$. Using our current definition of theta lift, the first unnumbered equation below \cite[(4.3)]{MR3435720} should read as
\begin{equation*}
  \int_{[G(Y)]} \chi_2^{-1}(\det h)  \theta_{\psi,\chi,\chi_2,X_a,Y}(g_a,h,\phi_Y^{(1)}) \chi(\det g) \overline{\theta_{\psi,\chi,\chi_2,X,Y}(g,h,\overline{\phi_Y^{(2)}})} dh.
\end{equation*}
First we have added more subscripts for clarification. Second we have put in the missing factor $\chi(\det g)$. This factor should be there by \cite[Prop.~2.2]{MR1327161}. With this, we find that the residue of
the Eisenstein series $E(g_a,\Phi_{f,s})$ at $s=s_0+\half (a-1)$ is given by
\begin{equation*}
  \int_{[G(Y)]} \chi_2^{-1}(\det h)\theta_{\psi,\chi,\chi_2,X_a,Y}(g_a,h,\phi_Y^{(1)})  \int_{[G(X)]}  \overline{\theta_{\psi,\chi,\chi_2,X,Y}(g,h,\overline{\phi_Y^{(2)}})} \chi(\det g) f(g) dg dh.
\end{equation*}
Thus non-vanishing of the residue means that the inner integral
is non-vanishing, but that is exactly (the complex conjugate of) the theta lift of $\chi(\det g) f(g) \in \sigma$ from $G(X)(\AA)$ to $G(Y)(\AA)$.

The following theorem is a corollary to Thm.~\ref{thm:Eis-pole-LO} and we remove the extraneous $+1$ in the statements. Note that in \cite{MR3435720}, we followed the notation of \cite{MR2767519} for the $L$-functions and here we use the more common one such that $L^S (s,\sigma \times \chi) = L^S (s,BC(\sigma) \otimes \chi) $ where $BC$ denotes the standard base change.
\begin{thm}[{\cite[Thm.~3.3]{MR3435720}}]\label{thm:poleL}
For $\sigma\in\cA_\cusp(G(X))$, the following hold.
  \begin{enumerate}
  \item Assume either that the partial $L$-function $L^S (s,\sigma \times \chi^{-1})$  has a pole at $s= \half (\dim X+1-\epsilon_\chi) - k>0$,  for some integer $k$, or assume that $\dim X-\epsilon_\chi$ is even and that  $L^S (s,\sigma \times \chi^{-1})$ is non-vanishing at $s=1/2$, in which case we set $k=\half (\dim X-\epsilon_\chi)$. Then
    $\LO_{\psi,\chi} (\sigma) \le 2k+ \epsilon_\chi$.
  \item If $\LO_{\psi,\chi} (\sigma) = 2k + \epsilon_\chi < \dim X$, then $L^S (s,\sigma \times \chi^{-1})$ is holomorphic for $\Re (s ) > \half (\dim X+1-\epsilon_\chi) - k$.
  \item If $\LO_{\psi,\chi} (\sigma) = 2k + \epsilon_\chi \ge \dim X $, then $L^S (s,\sigma \times \chi^{-1})$ is holomorphic for $\Re (s ) \ge 1/2$.
  \end{enumerate}
\end{thm}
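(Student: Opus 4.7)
The plan is to deduce Theorem~\ref{thm:poleL} from Theorem~\ref{thm:Eis-pole-LO} via the standard dictionary between poles of the Eisenstein series $E(g,f_s)$ attached to the cuspidal datum $\chi\otimes\sigma$ and the analytic behaviour of the partial Rankin--Selberg $L$-function $L^S(s,\sigma\times\chi^{-1})=L^S(s,BC(\sigma)\otimes\chi^{-1})$.

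First I would read off this dictionary from the constant term
\[
E_{N_1}(g,f_s)=f_s(g)+M(s,\chi\otimes\sigma)f_s(g).
\]
Because $Q_1$ is a maximal parabolic with Levi $R_{E/F}\GL_1\times G(X)$, the Gindikin--Karpelevich computation expresses the normalising factor of $M(s,\chi\otimes\sigma)$ at unramified places as a product of
\[
\frac{L^S(s,\sigma\times\chi^{-1})}{L^S(s+1,\sigma\times\chi^{-1})}
\]
with an auxiliary ratio of Hecke $L$-functions in the variable $2s$, attached to a quadratic character whose triviality is governed by the parity of $\dim X+\epsilon_\chi$. In $\Re(s)>0$ all denominators and the auxiliary numerator are holomorphic and non-vanishing except possibly at $s=1/2$, where the auxiliary numerator can contribute a pole exactly when $\dim X\equiv\epsilon_\chi\pmod 2$. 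Consequently, poles of $E(g,f_s)$ in $\Re(s)>0$ either coincide with poles of $L^S(s,\sigma\times\chi^{-1})$, or arise at $s=1/2$ from the auxiliary $L$-factor together with non-vanishing of $L^S(1/2,\sigma\times\chi^{-1})$, which is precisely the second scenario in assertion (1).

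With this dictionary in hand each assertion reduces to Theorem~\ref{thm:Eis-pole-LO}. For (1), either hypothesis produces a pole of $E(g,f_s)$ at $s_0=\tfrac{1}{2}(\dim X+1-\epsilon_\chi)-k$, so Theorem~\ref{thm:Eis-pole-LO}(2) gives $\LO_{\psi,\chi}(\sigma)\le\dim X+1-2s_0=2k+\epsilon_\chi$. For (2), if $\LO_{\psi,\chi}(\sigma)=2k+\epsilon_\chi<\dim X$, then Theorem~\ref{thm:Eis-pole-LO}(2) forces every pole of $E(g,f_s)$ to lie in $\Re(s)\le\tfrac{1}{2}(\dim X+1-\epsilon_\chi)-k$, and the dictionary then rules out any pole of $L^S(s,\sigma\times\chi^{-1})$ strictly beyond this bound. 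Part (3) is the same argument with the bound descending to $1/2$; the only additional care needed is to rule out a pole of $L^S$ at the edge $s=1/2$, which one handles via the extremal statement in Theorem~\ref{thm:Eis-pole-LO}(1) to the effect that a pole of $E(g,f_s)$ at $s_0=\tfrac{1}{2}(\dim X+1)$ can occur only when $\sigma=\nu_\chi\circ\det$ on an anisotropic $X$, a configuration that is either excluded by the assumption $\LO_{\psi,\chi}(\sigma)\ge\dim X$ or else treated as a small boundary case.

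The main obstacle is making the dictionary precise, in particular pinning down the exact shape of the auxiliary $L$-factor produced by the Gindikin--Karpelevich computation on the unitary group $G(X_1)$ and verifying that its contribution to the pole pattern in $\Re(s)>0$ is confined to $s=1/2$. This is precisely the place where the parity condition $\epsilon_\chi\equiv\dim X\pmod 2$ enters the statement and accounts for the second clause of (1). After that the corollary is pure bookkeeping on top of Theorem~\ref{thm:Eis-pole-LO}.
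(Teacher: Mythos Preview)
Your approach is correct and matches the paper's. The paper does not give a proof of this theorem here: it is quoted from \cite{MR3435720} with the remark that it is ``a corollary to Thm.~\ref{thm:Eis-pole-LO}'', and you have correctly reconstructed that corollary by combining Theorem~\ref{thm:Eis-pole-LO} with the Gindikin--Karpelevich description of the normalising factor of $M(w_0,s)$ in terms of $L^S(s,\sigma\times\chi^{-1})$ and an auxiliary abelian $L$-function in $2s$. Your handling of the boundary case $s=1/2$ in part~(3) is a bit breezy, but the mechanism you describe (the auxiliary factor contributing only at $s=1/2$, with the parity condition $\dim X\equiv\epsilon_\chi\pmod 2$ governing when this happens) is exactly the one underlying \cite[Prop.~2.8]{MR3435720}, so there is no real gap.
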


\section{Period Integrals and Main Results}
\label{sec:peri-integr-main}

The main goal of this paper is to generalise the results for orthogonal groups\cite{MR2540878}, symplectic groups\cite{MR3805648} and metaplectic groups\cite{wu:_period-metaplectic} to unitary groups.  We derive results on periods of residues of Eisenstein series and the first occurrence index, which is much finer than the results on locations of poles and lowest occurrence. One of the consequences is that we can strengthen Thm.~\ref{thm:Eis-pole-LO}, so that part \eqref{item:LO-pole-ineq} becomes an equality. 

We begin by introducing certain period integrals. Let  $\Theta_{\psi,\chi_1,\chi_2,X,Y}$ be the automorphic representation of $G(X)(\AA)$ spanned by the theta functions $\theta_{\psi,\chi_1,\chi_2,X,Y} (\cdot,1,\Phi)$ for $\Phi$
running over $\cS_{X,Y} (\A)$ and  write $\overline{\Theta_{\psi,\chi_1,\chi_2,X,Y}}$ for the complex conjugate representation.
In \cite{MR3435720}, we studied  periods on $\sigma\otimes \overline {\Theta_{\psi,\chi_1,\chi_2,X,Y}}$, namely, period integrals of the form:
\begin{equation}\label{eq:int-sigma-theta}
  \int_{[J]} f(g)\overline{\theta_{\psi,\chi_1,\chi_2,X,Y}(g,1,\Phi)}dg,
\end{equation}
where $J$ is a subgroup of $G(X)$. Most often, $J=G(Z)$ where $Z$ is a non-degenerate skew-Hermitian subspace of $X$. 

We will consider period integrals involving the Eisenstein series (see \eqref{eq:period-eis-theta} and \eqref{eq:period-residue-2}). They bear some resemblance to \eqref{eq:int-sigma-theta} and our computation below will relate them to \eqref{eq:int-sigma-theta}.

First we summarise and extend the results on period integrals \eqref{eq:int-sigma-theta} in Sections~5.1 and 5.3 of \cite{MR3435720}. We rephrase them using our current notation and we  use the language of distinction by subgroups to express if the period integrals vanish or not for some choice of data. As both $G (X)$ and $G (Y)$ are unitary groups, their roles can be exchanged. Thus we write just one direction here.

We define some subgroups of $G (X)$. Let $Z$ be a non-degenerate skew-Hermitian subspace of $X$ and let $L$ be a totally
isotropic subspace of $Z$. Then $G (Z)$ is a unitary subgroup of $G (X)$. Let $Q (Z,L)$ be the parabolic subgroup of
$G (Z)$ that stabilisers $L$, $N(Z,L)$ the unipotent radical of $Q (Z,L)$ and $J (Z,L)$  the subgroup of $Q (Z,L)$ consisting of the elements of  $Q (Z,L)$ that fix $L$ element-wise. We remark that $J (Z,L)\isom N(Z,L)\rtimes G(Z)$ is a Jacobi group when $L\neq 0$. When $L=0$, then $J (Z,L)$ is just $G (Z)$. Sometimes we will write $Q(Z,v)$ as a short hand for $Q(Z,L)$ if $v$ spans $L$. A similar definition works for the Hermitian space $Y$.

Then we have the following on vanishing and non-vanishing of \eqref{eq:int-sigma-theta}.

\begin{prop}\label{prop:period-X->Y}
  Let $\sigma\in\cA_\cusp(G(X))$ and $Y$ a (possibly trivial) anisotropic Hermitian space. Assume that
  \begin{equation*}
    \FO_{\psi,\chi_1,\chi_2,X}^{Y} (\sigma) = \dim Y + 2r_0.
  \end{equation*}
   Then  the following hold.
  \begin{enumerate}
  \item \label{item:1} For any non-degenerate skew-Hermitian subspace $Z$ of $X$, totally isotropic subspace $L$ of $Z$ and non-negative integer  $r$, if $\dim X - \dim Z +\dim L +r <r_0$ then $\sigma\otimes \overline {\Theta_{\psi,\chi_1,\chi_2,X,Y_r}}$ is not $J (Z,L)$-distinguished.
\item \label{item:3} There exists a non-degenerate skew-Hermitian subspace $Z$ of $X$ satisfying $\dim X - \dim Z = r_0$ such that $\sigma\otimes \overline {\Theta_{\psi,\chi_1,\chi_2,X,Y}}$ is $G (Z)$-distinguished.
  \end{enumerate}
\end{prop}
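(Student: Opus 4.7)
This proposition is a restatement and mild extension of results scattered through Sections~5.1 and 5.3 of \cite{MR3435720}, so my plan is to explain how each half follows from the tower/unfolding machinery developed there, rather than to give a self-contained proof.

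For part~(\ref{item:1}) the strategy is an unfolding against the first occurrence. Starting from a putative non-vanishing period
\begin{equation*}
  \int_{[J(Z,L)]} f(g)\, \overline{\theta_{\psi,\chi_1,\chi_2,X,Y_r}(g,1,\Phi)}\, dg,
\end{equation*}
I would decompose $X = Z \oplus Z^\perp$ with $\dim Z^\perp = \dim X - \dim Z$, split the theta kernel for $(G(X),G(Y_r))$ as a product of theta kernels for the smaller pairs $(G(Z),G(Y_r))$ and $(G(Z^\perp),G(Y_r))$, and then expand along the centre of $N(Z,L)$. After absorbing the unipotent integrations and using the cuspidality of $f\in\sigma$ to discard degenerate orbits, non-vanishing of the period forces the theta lift of $\sigma$ to $G(Y_{r'})$ to be non-zero for
\begin{equation*}
  \dim Y_{r'} \;=\; \dim Y + 2\bigl(r + \dim L + (\dim X - \dim Z)\bigr),
\end{equation*}
which under the stated inequality is strictly less than $\dim Y + 2 r_0 = \FO_{\psi,\chi_1,\chi_2,X}^Y(\sigma)$, a contradiction.

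For part~(\ref{item:3}) the argument runs in reverse. By the definition of $r_0$, the lift $\theta_{\psi,\chi_1,\chi_2,X}^{Y_{r_0}}(\sigma)$ is non-zero, and minimality together with the Rallis tower property force it to be cuspidal, so that Theorem~\ref{thm:theta_back_and_forth} is applicable. A seesaw for the pair $(G(X),G(Y_{r_0}))$ against the stabiliser of a maximal isotropic subspace of $Y_{r_0}$ (which has dimension $r_0$, since $Y$ is anisotropic) rewrites a Fourier--Jacobi coefficient of the cusp form $\theta_{\psi,\chi_1,\chi_2,X}^{Y_{r_0}}(\sigma)$ as an inner $G(Z)$-period of $\sigma$, for a suitable non-degenerate skew-Hermitian $Z\subset X$ of codimension $r_0$, paired against a theta kernel on the reduced dual pair $(G(X),G(Y))$. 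The non-vanishing of the theta lift at the first occurrence therefore propagates to a non-vanishing $G(Z)$-period of $\sigma\otimes\overline{\Theta_{\psi,\chi_1,\chi_2,X,Y}}$.

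The main obstacle, and the reason the statements cannot simply be read off the definition of $\FO$, is the careful bookkeeping of the Weil-representation splittings as the dual pair changes. Each tower step shifts $\dim Y_r$ by $2$, preserving the parity condition $\epsilon_{\chi_1}\equiv\dim Y \pmod 2$, but alters the splitting cocycle and therefore forces the introduction of correction characters $\nu_{\chi_2'\chi_2^{-1}}$ as in \eqref{eq:change-chi2}; similarly, passing from $X$ to $Z$ modifies the splitting over $G(Z)(\AA)$. Once these normalisations are tracked consistently, the analytic justification of the unfolding is routine, since $f\in\sigma$ is rapidly decreasing and theta functions have moderate growth on Siegel sets.
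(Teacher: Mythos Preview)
Your overall strategy coincides with the paper's: both parts rest on the identification of $J(Z,L)$-periods of $\sigma$ against $\overline{\Theta_{\psi,\chi_1,\chi_2,X,Y_r}}$ with Fourier coefficients of the theta lifts $\theta_{\psi,\chi_1,\chi_2,X}^{Y_t}(\sigma)$. Your part~(\ref{item:1}) is the contrapositive of the paper's formulation (vanishing of lifts for $t<r_0$ $\Rightarrow$ vanishing of periods), which is fine. Two imprecisions are worth flagging. First, the theta kernel for $(G(X),G(Y_r))$ does not literally split as a product of kernels for $(G(Z),G(Y_r))$ and $(G(Z^\perp),G(Y_r))$, since $G(Z)\times G(Z^\perp)$ is not how one unfolds; rather one works in a mixed model adapted to the parabolic on the $Y$-side and reads off the period after integrating out the unipotent. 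Second, your one-shot formula $r'=r+\dim L+(\dim X-\dim Z)$ hides what in the paper is an \emph{inductive} argument: one step relates a $J(Z,L)$-period to a period with smaller $\dim L$ (and shifted $r$), and only after iterating does one land on a genuine theta lift to some $Y_t$ with $t<r_0$.

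For part~(\ref{item:3}) there is a genuine gap. The paper's proof invokes Li's theorem \cite{MR1166512}: the first-occurrence lift $\theta_{\psi,\chi_1,\chi_2,X}^{Y_{r_0}}(\sigma)$, being a non-zero cusp form, has a non-vanishing \emph{non-singular} Fourier coefficient along the Siegel parabolic of $G(Y_{r_0})$, and it is precisely such a coefficient that unfolds to a $G(Z)$-period with $\dim X-\dim Z=r_0$. Your argument asserts that ``non-vanishing of the theta lift \ldots\ propagates to a non-vanishing $G(Z)$-period'', but non-vanishing and cuspidality only guarantee that \emph{some} Fourier coefficient along the centre of the unipotent is non-zero; a priori it could be a singular one, which unfolds to a period of the shape handled in part~(\ref{item:1}) with strictly smaller codimension, not to a $G(Z)$-period of codimension $r_0$. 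Your appeal to Theorem~\ref{thm:theta_back_and_forth} does not close this gap: that theorem concerns the two-way composite of theta lifts and says nothing about which Fourier coefficients of the one-way lift survive. You need either Li's result as a black box, or an explicit argument that all singular Fourier coefficients of $\theta_{\psi,\chi_1,\chi_2,X}^{Y_{r_0}}(\sigma)$ vanish (which itself uses the first-occurrence hypothesis and the inductive machinery of part~(\ref{item:1})).
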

\begin{rmk}
  We will only use $r=0$ in Prop.~\ref{prop:period-X->Y}\eqref{item:1} later.
\end{rmk}
\begin{proof}
  The proof is analogous to that of \cite[Prop.~3.2]{MR3805648} in the symplectic case. We will be brief here.
  The idea is to take Fourier coefficients of $\theta_{\psi,\chi_1,\chi_2,X}^{Y_t}(\sigma)$ for varying $t$. If $t< r_0$, then $\theta_{\psi,\chi_1,\chi_2,X}^{Y_t}(\sigma)$ is an earlier theta lift than the first occurrence and hence its Fourier coefficients are all zero. Via an inductive argument, we get the vanishing result \eqref{item:1}. If $t=r_0$, then $\theta_{\psi,\chi_1,\chi_2,X}^{Y_t}(\sigma)$ is the first occurrence which by \cite{MR1166512} has a non-vanishing non-singular Fourier coefficients. This gives us the non-vanishing result \eqref{item:3}.
\end{proof}

 In the remaining sections, we will  write $G(X)$ for $G(X)(F)$ and similarly for other algebraic groups over $F$ to reduce clutter unless there is confusion. 

 Now we come to the key result of this article.
We study period integrals involving the Eisenstein series:
\begin{equation}
  \int_{[G (Z_1)]}  E (g,f_s) \overline{\theta_{\psi,\chi_1,\chi_2,X_1,Y} (g,h,\Phi)} dg
\end{equation}
with $f_s\in \cA_1(s,\chi_1,\sigma)$. We note that, compared to \eqref{eq:int-sigma-theta}, we have $X_1$ (resp. $Z_1$) instead of $X$ (resp. $Z$).
This integral is not absolutely convergent in general. To make sense of it, we apply the Arthur truncation.
Note that the Eisenstein series is attached to the maximal parabolic subgroup $Q_1$ of $G (X_1)$. In this case, $\fa_{M_1}$ is
$1$-dimensional and we identify it with $\RR$. For $c\in \RR_{>0}$, set $\hat {\tau}^c$ to be the characteristic function
of $\RR_{> \log c}$ and set $\hat {\tau}_c = 1_\RR - \hat {\tau}^c$. Then the truncated Eisenstein series is
\begin{equation}\label{eq:truncation}
  \Lambda^c E (g,f_s) = E (g,f_s) - \sum_{\gamma \in Q_1 \lmod G (X_1)} E_{Q_1} (\gamma g , f_s) \hat {\tau}^c (H  (\gamma g))
\end{equation}
where $E_{Q_1} (\cdot,  f_s)$ is the constant term of $E (\cdot,f_s)$ along $Q_1$ and $H: G(X_1)(\AA) \rightarrow \fa_{M_1}$ is the Harish-Chandra map. More precisely, for $m\in M_1(\AA)$, we have
\begin{equation*}
  \exp(\form{\mu}{H(m)}) = \prod_{v} |\mu(m_v) |_v
\end{equation*}
for all rational characters $\mu $ of $M_1$, where the pairing is between $\fa_{M_1}^*$ and $\fa_{M_1}$ and the product runs over all places $v$ of $F$. Then $H$ is extended to $G(X_1)(\AA)$ via the Iwasawa decomposition.
The summation has only finitely many
non-vanishing terms for each fixed $g$. The truncated Eisenstein series is
rapidly decreasing while the theta series is of moderate growth. Thus if we replace the Eisenstein series with the truncated Eisenstein
series, we get a period integral that is absolutely convergent:
\begin{equation}\label{eq:period-eis-theta}
  \int_{[G (Z_1)]} \Lambda^c E (g,f_s) \overline {\theta_{\psi,\chi_1,\chi_2,X_1,Y} (g,1,\Phi)} dg.
\end{equation}

For $\Re s > \rho_{Q_1} = \half (\dim X +1)$ we have
\begin{equation*}
  E_{Q_1} (g,f_s) = f_s (g) + M (w_0,s)f_s (g),
\end{equation*}
where $w_0$ is  the longest Weyl group element in $Q_1\lmod G (X_1)/Q_1$.
See (2.3), (2.4) and (2.5) in \cite{MR3435720}, where it was denoted by  $w_{\emptyset}$, for detail.  The above identity holds for all $s$ as meromorphic functions. 

Thus the truncated Eisenstein series $\Lambda^c E (g,f_s)$ equals
\begin{equation*}
  \sum_{\gamma \in Q_1 \lmod G (X_1)} f_s (\gamma g) \hat {\tau}_c (H (\gamma g)) 
- \sum_{\gamma \in Q_1 \lmod G (X_1)} M (w_0,s) f_s (\gamma g) \hat {\tau}^c (H (\gamma g)).
\end{equation*}
The first summation is absolutely convergent for $\Re s > \rho_{Q_1}$ and the second has only finitely many terms for fixed $g$. Both have meromorphic continuation to the whole complex plane.

Set 
\begin{align}\label{eq:defn_xi}
  \xi_{c,s} (g) = & \overline{\theta_{\psi,\chi_1,\chi_2,X_1,Y} (g, 1, \Phi)} f_s (g)\hat {\tau}_c (H (g)) ;\nonumber\\
  \xi_s^c (g) = & \overline {\theta_{\psi,\chi_1,\chi_2,X_1,Y} (g, 1, \Phi)} M (w_0,s) f_s (g)\hat {\tau}^c (H (g)) .
\end{align}
 Also for 
 \begin{equation*}
   \xi=\xi_{c,s} \quad \text{or} \quad \xi_s^c,
 \end{equation*}
 we set
\begin{equation}\label{eq:I_xi}
  I (\xi) =\int_{[G (Z_1)]} \sum_{\gamma\in Q_1 \lmod G (X_1)} \xi (\gamma g) dg.
\end{equation}
Thus
\begin{equation*}
   \int_{[G (Z_1)]} \Lambda^c E (g,f_s) \overline {\theta_{\psi,\chi_1,\chi_2,X_1,Y} (g,1,\Phi)} dg  
   = I (\xi_{c,s})  - I (\xi_s^c),   
 \end{equation*}
 as long as the two terms on the right hand side are absolutely convergent.
 We will cut each term on the right hand side into several pieces, show that each piece is absolutely convergent at least when the truncation parameter $c$ is large enough and when $\Re s$ is large enough, and compute their values. The values will be in terms of period integrals on $\sigma\otimes \overline {\Theta_{\psi,\chi_1,\chi_2,X,Y}}$ and of certain explicit functions in $s$. In this way we relate analytic properties of the Eisenstein series to the  vanishing or non-vanishing of period integrals on $\sigma\otimes \overline {\Theta_{\psi,\chi_1,\chi_2,X,Y}}$.

We decompose $ Q_1 \lmod G (X_1)$ according to $G (Z_1)$-orbits and unfold \eqref{eq:I_xi} formally to get
\begin{align*}
  I (\xi) = &\int_{[G (Z_1)]} \sum_{\gamma \in Q_1 \lmod G (X_1)} \xi (\gamma g) dg\\
  = & \sum_{\gamma \in Q_1 \lmod G (X_1) / G (Z_1)} \int_{G (Z_1)^\gamma \lmod G (Z_1) (\A)} \xi (\gamma g) dg,
\end{align*}
where $G (Z_1)^\gamma = \gamma^{-1}Q_1\gamma \cap G (Z_1)$ is the stabiliser in $G (Z_1)$ of $Q_1\gamma$.

We examine more closely the $G (Z_1)$-orbits in the generalised flag variety $Q_1 \lmod G (X_1)$ which classifies the isotropic lines in $X_1$. We use the correspondence given below:
\begin{align*}
  Q_1 \lmod G (X_1) &\longleftrightarrow \{\text {Isotropic lines in } X_1\}\\
  \gamma &\longleftrightarrow \ell_1^-\gamma.
\end{align*}
For an isotropic line in $X_1$  spanned by $x\in X_1$, we will write $\gamma_{Ex}$ or more succinctly $\gamma_x$ for any element in $G (X_1)$ that maps $\ell_1^-$ to $Ex$. We will generally take $\gamma_x$ to be as simple as possible to facilitate computation below.

We divide $I (\xi)$ into several parts. Write  $X_1 = V \oplus Z_1$ with $V$ being the orthogonal complement of $Z_1$ in $X_1$. Given an isotropic vector $x$ in $X_1$ we decompose it accordingly
\begin{equation*}
  x = v + z
\end{equation*}
for $v\in V$ and $z\in Z_1$. There are several possibilities  and we define
\begin{enumerate}
\item $\Omega_{0,1}$ to be the set of isotropic lines such that $v=0$ and $z \neq 0$ is isotropic,
\item $\Omega_{1,0}$ to be the set of isotropic lines such that $v \neq 0$ is isotropic and $z=0$,
\item $\Omega_{1,1}$ to be the set of isotropic lines such that $v\neq 0$ is isotropic and $z\neq 0$ is isotropic and
\item $\Omega_{2,2}$ to be the set of isotropic lines such that $v\neq 0$ is anisotropic, $z\neq 0$ is anisotropic with $\form{v}{v}_{V} = -  \form{z}{z}_{Z_1}$.
\end{enumerate}
Note that each set is closed under the action of $G (Z_1)$. Compared to the cases of the symplectic groups\cite{MR3805648} and of the metaplectic groups\cite{wu:_period-metaplectic}, we have an extra type $\Omega_{2,2}$ to consider. Thus we have
\begin{equation}\label{eq:cut-into-pieces}
  I (\xi) = I_{\Omega_{0,1}} (\xi) + I_{\Omega_{1,0}} (\xi) +I_{\Omega_{1,1}} (\xi)+I_{\Omega_{2,2}} (\xi),
\end{equation}
where
\begin{equation}\label{eq:I-omega}
  I_{\Omega_{*,*}} (\xi) = \sum_{x\in\Omega_{*,*}  / G (Z_1)} \int_{G (Z_1)^{\gamma_x} \lmod G (Z_1) (\A)}  \xi (\gamma_x g) dg.
\end{equation}
The above is justified once we show absolute convergence for each $I_{\Omega_{*,*}} (\xi)$.

Section~\ref{sec:comp-integrals} is dedicated to computing the values of each $I_{\Omega_{*,*}} (\xi)$. We will see that only $I_{\Omega_{0,1}} (\xi)$ can possibly be non-zero. In Section~\ref{sec:absol-conv-integr}, we will show that every $I_{\Omega_{*,*}} (\xi)$ is absolutely convergent.
Putting these together, we get results on poles and residues of Eisenstein series. The conditions on distinction and non-distinction may look technical in the theorem below, but  such a $Z$ exists  if we assume $\FO_{\psi,\chi}^Y(\sigma) = \dim Y + 2r$ by Prop.~\ref{prop:period-X->Y}.

\begin{thm} \label{thm:dist->Eis-pole}
  Let $\sigma \in \cA_\cusp (G (X))$ and let $Y$ be a
  (possibly trivial) anisotropic Hermitian space. Let $Z$ be a non-degenerate skew-Hermitian subspace of $X$. Set $r=\dim X - \dim Z$ and $s_0=\half (\dim X -( \dim Y +2r) +1)$.
 Assume that $\sigma\otimes \overline {\Theta_{\psi,\chi,\chi_2,X,Y}}$ is  $G(Z)$-distinguished and that it is not $J(Z',L')$-distinguished for all non-degenerate skew-Hermitian subspaces $Z'$ of $X$ and isotropic subspaces $L'$ of $Z'$ such that $\dim L'=0,1$ and $\dim Z' -  \dim L' > \dim Z$.
 Then
$s_0$ is a  pole of $E(g,f_s)$ for some $f_s\in \cA_1 (s,\chi,\sigma)$.
\end{thm}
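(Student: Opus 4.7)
The plan is to prove the theorem by contradiction using the decomposition of the regularised period integral set up in the excerpt. Assume $E(g,f_s)$ has no pole at $s=s_0$ for any $f_s\in\cA_1(s,\chi,\sigma)$. Then $\Lambda^c E(g,f_s)$ is also holomorphic at $s_0$, so the pairing
\begin{equation*}
\int_{[G(Z_1)]} \Lambda^c E(g,f_s)\,\overline{\theta_{\psi,\chi,\chi_2,X_1,Y}(g,1,\Phi)}\,dg = I(\xi_{c,s}) - I(\xi^c_s)
\end{equation*}
is holomorphic at $s_0$ for every choice of $f_s$ and $\Phi$. I will derive a contradiction by exhibiting $f_s$ and $\Phi$ for which the right-hand side has a simple pole at $s_0$. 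Throughout I work with $\Re s$ large so that the unfolding in \eqref{eq:cut-into-pieces} is legitimate (justified in Sec.~\ref{sec:absol-conv-integr}), then continue meromorphically.

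First I would analyse $I(\xi_{c,s})$ orbit by orbit using the decomposition into $\Omega_{0,1},\Omega_{1,0},\Omega_{1,1},\Omega_{2,2}$. The key observation is that each piece $I_{\Omega_{*,*}}(\xi_{c,s})$ unfolds, via the description of stabilisers $G(Z_1)^{\gamma_x}$, into an inner integral of the form \eqref{eq:int-sigma-theta} on a subgroup $J(Z',L')$ of $G(X)$ times an outer zeta-type integral in $s$ coming from the $\hat\tau_c$ factor and the $|\det t_g|^{s+\rho_{Q_1}}$ factor. A direct examination of the four orbit types shows:
\begin{itemize}
\item for $\Omega_{1,0}$ one gets $Z'=Z$, $L'$ a nontrivial isotropic line in $V$, so $\dim Z'-\dim L' = \dim Z$, but the subgroup actually enlarges $Z$ by an orthogonal direction, producing a period over $J(Z'',L'')$ with $\dim Z''-\dim L'' > \dim Z$;
\item for $\Omega_{1,1}$ and $\Omega_{2,2}$ one again gets period integrals over $J(Z',L')$ with $\dim Z'-\dim L'>\dim Z$, since in both cases $Z'$ is strictly larger than $Z$ (obtained by adjoining $z$ or a non-degenerate vector);
\item only $\Omega_{0,1}$ yields the subgroup $G(Z)$ itself, paired against a cusp form in $\sigma$ times the theta kernel on $X$.
\end{itemize}
The non-distinction hypothesis on $J(Z',L')$ with $\dim Z'-\dim L' >\dim Z$ kills $I_{\Omega_{1,0}},I_{\Omega_{1,1}},I_{\Omega_{2,2}}$ identically in $s$. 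Thus $I(\xi_{c,s}) = I_{\Omega_{0,1}}(\xi_{c,s})$, and the latter factors as a nonzero $G(Z)$-period (guaranteed by the $G(Z)$-distinction hypothesis, for suitable choice of $f_s$ and $\Phi$) multiplied by an elementary integral of the form $\int_{|a|>c'} |a|^{s-s_0-1}\,d^\times a$ that produces a simple pole at $s=s_0$ with residue of order $c^{s-s_0}$.

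The parallel analysis applies to $I(\xi^c_s)$, where the section $f_s$ is replaced by its intertwining image $M(w_0,s)f_s$; the same orbit decomposition and the same vanishing argument leave only the $\Omega_{0,1}$-contribution, which is now an absolutely convergent integral (because of $\hat\tau^c$) having no pole at $s_0$, or else having a pole whose leading behaviour is $c^{-(s-s_0)}$ times a period of the intertwined section. Comparing the two terms, the combination $I(\xi_{c,s})-I(\xi^c_s)$ has a nonzero residue at $s=s_0$ proportional to the $G(Z)$-period of $f$ against the theta kernel, contradicting the assumed holomorphy. The main obstacle is executing the orbit-by-orbit unfolding cleanly — in particular identifying the stabiliser $G(Z_1)^{\gamma_x}$ in each case and recognising the inner integral as a $J(Z',L')$-period in the sense of Prop.~\ref{prop:period-X->Y}, together with the delicate absolute-convergence justification (deferred to Sec.~\ref{sec:absol-conv-integr}) needed to treat the new orbit type $\Omega_{2,2}$ that does not appear in the symplectic or metaplectic cases.
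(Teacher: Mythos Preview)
Your overall strategy matches the paper's: unfold the truncated period along $G(Z_1)$-orbits in $Q_1\backslash G(X_1)$, kill the $\Omega_{1,0},\Omega_{1,1},\Omega_{2,2}$ pieces using the non-distinction hypotheses, and extract a pole at $s_0$ from the surviving $\Omega_{0,1}$ piece. Two points need correction.

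First, your description of the orbit types is garbled. The $\Omega_{1,0}$ piece does not produce a period with $Z'=Z$ and $L'$ a line in $V$; it gives a period over $G(Ev^+\oplus Z\oplus Ev)$, i.e.\ $G(Z')$ with $\dim Z'=\dim Z+2$ and $L'=0$ (so $\dim Z'-\dim L'=\dim Z+2$). Similarly $\Omega_{1,1}$ gives $J(Z',L')$ with $\dim Z'=\dim Z+2$, $\dim L'=1$, and $\Omega_{2,2}$ gives $G(Z')$ with $\dim Z'=\dim Z+1$, $L'=0$. These all satisfy $\dim Z'-\dim L'>\dim Z$, which is what the hypothesis covers.

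Second, and more seriously, your treatment of $I(\xi^c_s)$ has a gap. The elementary factor arising there is $c^{-s-s_0}/(s+s_0)$, not $c^{-(s-s_0)}$; it is holomorphic at $s=s_0$, and the only possible source of a pole is $M(w_0,s)$ itself (appearing in the period factor). Your ``or else'' clause with a $c$-dependence comparison does not settle whether the two residues could cancel. The paper handles this by a case split: if $M(w_0,s)$ has a pole at $s_0$, then the Eisenstein series already has a pole there because its analytic behaviour is governed by its constant term $f_s+M(w_0,s)f_s$; if $M(w_0,s)$ is holomorphic at $s_0$, then $I_{\Omega_{0,1}}(\xi^c_s)$ is holomorphic while $I_{\Omega_{0,1}}(\xi_{c,s})$ has a genuine simple pole for some choice of data (by the $G(Z)$-distinction), forcing a pole of the truncated Eisenstein series and hence of $E(g,f_s)$. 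In your contradiction framework this is equivalent to noting that if $E(g,f_s)$ is holomorphic at $s_0$ for all $f_s$, then so is its constant term, hence $M(w_0,s)$ is holomorphic at $s_0$, and then $I(\xi^c_s)$ is holomorphic; you should make this step explicit rather than leaving an ambiguous ``or else''.
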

\begin{proof}
By Propositions~\ref{prop:val-I-Omega10}, \ref{prop:val-I-Omega11} and \ref{prop:val-I-Omega22}, we get vanishing of $I_{\Omega_{1,0}} (\xi)$, $I_{\Omega_{1,1}} (\xi)$ and $I_{\Omega_{2,2}} (\xi)$ respectively where $\xi=\xi_{c,s}$ or $\xi_s^c$.
   Thus
\begin{equation*}
  \int_{[G (Z_1)]} \Lambda^c E (g,f_s) \overline {\theta_{\psi,\chi,\chi_2,X_1,Y} (g,1,\Phi)} dg = I_{\Omega_{0,1}}(\xi_{c,s}) - I_{\Omega_{0,1}}(\xi_s^c).
\end{equation*}

If $M(w_0,s)$ has a pole at $s=s_0$, then as the analytic behaviour of the Eisenstein series is governed by its constant terms\cite[Lemma I.4.10]{MR1361168}, the Eisenstein series has a pole at $s=s_0$.
  
Now assume that $M(w_0,s)$ does not have a pole at $s=s_0$. Then \eqref{eq:int-omega01-intertwine-part-over-t} and \eqref{eq:int-omega01-intertwine-part-part-2} show that $I_{\Omega_{0,1}}(\xi_s^c)$ does not have a pole at $s=s_0$. Prop.~\ref{prop:nonvan-I-omega01} shows that $I_{\Omega_{0,1}}(\xi_{c,s})$ has a pole at $s=s_0$ for some choice of data. Hence the truncated Eisenstein series must have a pole at $s=s_0$ and so must the Eisenstein series. Therefore $M(w_0,s)$ has a pole at $s=s_0$. We get a contradiction, so this case cannot occur.
\end{proof}

For  $s_1>0$, let $\cE_{-1} (g,f_{s_1})$ denote the residue of $E(g,f_s)$ at $s=s_1$. We note that by the general theory of Eisenstein series \cite[IV.1.11]{MR1361168}, the pole is at most simple.
As we will see, in the proof of the following theorem, the  period integral 
\begin{equation}\label{eq:period-residue-2}
  \int_{[G(Z_1)]} \cE_{-1}(g,f_{s_0}) \overline {\theta_{\psi,\chi,\chi_2,X_1,Y} (g,1,\Phi)} dg
\end{equation}
is absolutely convergent. Thus we have the notion of distinction for the automorphic representation $\cE_{-1}(s_0,\chi,\sigma)\otimes \overline{\Theta_{\psi,\chi,\chi_2,X_1,Y}}$ where $\cE_{-1}(s_0,\chi,\sigma)$ is the residue representation that is the span of all $\cE_{-1}(g,f_{s_0})$.

With one additional assumption to Theorem~\ref{thm:dist->Eis-pole}, we get the following theorem.
\begin{thm} \label{thm:dist->period-residue-Eis}
  Let $\sigma \in \cA_\cusp (G (X))$ and let $Y$ be a
  (possibly trivial) anisotropic Hermitian space. Let $Z$ be a non-degenerate skew-Hermitian subspace of $X$. Set $r=\dim X - \dim Z$ and $s_0=\half (\dim X -( \dim Y +2r) +1)$.
 Assume that $\sigma\otimes \overline {\Theta_{\psi,\chi,\chi_2,X,Y}}$ is  $G(Z)$-distinguished and that it is not $J(Z',L')$-distinguished for all non-degenerate skew-Hermitian subspaces $Z'$ of $X$ and isotropic subspaces $L'$ of $Z'$ such that $\dim L'=0,1$ and $\dim Z' -  \dim L' > \dim Z$. Assume further that $s_0 >0$. Then the following hold.
  \begin{enumerate}
  \item \label{item:4}$s_0$ is in $\cP_1 (\chi,\sigma)$ and it is a
simple pole. 
  \item \label{item:5}$\cE_{-1}(s_0,\chi,\sigma)\otimes \overline{\Theta_{\psi,\chi,\chi_2,X_1,Y}}$ is $G(Z_1)$-distinguished.
    \item \label{item:6}$\cE_{-1}(s_1,\chi,\sigma)\otimes \overline{\Theta_{\psi,\chi,\chi_2,X_1,Y}}$
     is not $G(Z_1)$-distinguished for  any $0<s_1\neq s_0$.
   \item \label{item:7}For any $Z''$ with $\dim Z'' > \dim Z$, $\cE_{-1}(s_1,\chi,\sigma)\otimes \overline{\Theta_{\psi,\chi,\chi_2,X_1,Y}}$ is not $G(Z''_1)$-distinguished for  any $s_1>0$.
\end{enumerate}
\end{thm}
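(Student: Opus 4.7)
The plan is to take residues at $s=s_0$ in the truncation-and-decomposition identity
\[
\int_{[G(Z_1)]} \Lambda^c E(g,f_s) \overline{\theta_{\psi,\chi,\chi_2,X_1,Y}(g,1,\Phi)}\, dg \;=\; I_{\Omega_{0,1}}(\xi_{c,s}) - I_{\Omega_{0,1}}(\xi_s^c)
\]
established in the proof of Theorem~\ref{thm:dist->Eis-pole}, and then to let the truncation parameter $c\to\infty$. Part~\eqref{item:4} is essentially immediate: Theorem~\ref{thm:dist->Eis-pole} supplies the existence of the pole, and simpleness is the content of \cite[IV.1.11]{MR1361168}, as recorded just above the statement.

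For part~\eqref{item:5}, I multiply through by $(s-s_0)$ and send $s\to s_0$. By linearity of $\Lambda^c$ in the Eisenstein variable, the left side becomes $\int_{[G(Z_1)]} \Lambda^c \cE_{-1}(g,f_{s_0}) \overline{\theta_{\psi,\chi,\chi_2,X_1,Y}(g,1,\Phi)}\, dg$, and the right side becomes $\Res_{s=s_0}\bigl[I_{\Omega_{0,1}}(\xi_{c,s})-I_{\Omega_{0,1}}(\xi_s^c)\bigr]$. Since $\cE_{-1}(g,f_{s_0})$ is square-integrable modulo center and the theta function is of moderate growth, I plan to verify absolute convergence of \eqref{eq:period-residue-2} directly from the explicit residue formula for $I_{\Omega_{0,1}}$; the $c$-dependent part of that residue will be a polynomial in $\log c$ whose contributions from $\xi_{c,s}$ and $\xi_s^c$ cancel, leaving a $c$-independent expression equal to the untruncated period. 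By Prop.~\ref{prop:nonvan-I-omega01} this expression is non-zero for suitable $f_{s_0}$ and $\Phi$, so $\cE_{-1}(s_0,\chi,\sigma)\otimes\overline{\Theta_{\psi,\chi,\chi_2,X_1,Y}}$ is $G(Z_1)$-distinguished.

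For part~\eqref{item:6}, let $s_1>0$ with $s_1\neq s_0$. If $s_1\notin\cP_1(\chi,\sigma)$ there is nothing to show. Otherwise, the same residue argument at $s_1$ identifies $\int_{[G(Z_1)]}\cE_{-1}(g,f_{s_1})\overline{\theta}\, dg$ with $\Res_{s=s_1}\bigl[I_{\Omega_{0,1}}(\xi_{c,s})-I_{\Omega_{0,1}}(\xi_s^c)\bigr]$. The explicit expressions \eqref{eq:int-omega01-intertwine-part-over-t} and \eqref{eq:int-omega01-intertwine-part-part-2}, together with Prop.~\ref{prop:nonvan-I-omega01}, show that under the dimensional relation fixed by $Z$ the only positive pole of either term is at $s_0$; hence the residue at $s_1$ vanishes identically and \eqref{eq:period-residue-2} is zero.

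For part~\eqref{item:7}, I replace $Z$ by $Z''$ throughout the decomposition. The four-type orbit stratification $\Omega_{0,1},\Omega_{1,0},\Omega_{1,1},\Omega_{2,2}$ of isotropic lines in $X_1$ under $G(Z''_1)$ has the same shape as before, and Propositions~\ref{prop:val-I-Omega10}, \ref{prop:val-I-Omega11}, \ref{prop:val-I-Omega22} realise each of $I_{\Omega_{1,0}}$, $I_{\Omega_{1,1}}$, $I_{\Omega_{2,2}}$ as an inner period against $\sigma\otimes\overline{\Theta_{\psi,\chi,\chi_2,X,Y}}$ on a Jacobi group $J(Z',L')$ with $\dim Z'-\dim L'>\dim Z$; the non-distinction hypothesis forces these to vanish. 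The $\Omega_{0,1}$ piece, after taking the residue at any $s_1>0$, likewise becomes a $G(Z'')$-period against $\sigma\otimes\overline{\Theta_{\psi,\chi,\chi_2,X,Y}}$, and the same hypothesis (with $L'=0$, $Z'=Z''$) kills it. The main obstacle throughout is the absolute convergence of \eqref{eq:period-residue-2} and the legitimacy of exchanging the $c\to\infty$ limit with residue extraction; both will be controlled by the uniform estimates worked out in Sec.~\ref{sec:absol-conv-integr}.
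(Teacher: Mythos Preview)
Your overall architecture matches the paper's: take residues in the identity
\[
\int_{[G(Z_1)]} \Lambda^c E(g,f_s)\,\overline{\theta_{\psi,\chi,\chi_2,X_1,Y}(g,1,\Phi)}\,dg \;=\; I_{\Omega_{0,1}}(\xi_{c,s}) - I_{\Omega_{0,1}}(\xi_s^c)
\]
and then relate the truncated period of the residue to the untruncated one. But your passage from truncated to untruncated is different from the paper's, and your version contains two concrete errors.

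First, the ``polynomial in $\log c$'' claim is simply wrong. From \eqref{eq:int-omega01-over-t} and \eqref{eq:int-omega01-intertwine-part-over-t}, the $c$-dependence of $I_{\Omega_{0,1}}(\xi_{c,s})$ and $I_{\Omega_{0,1}}(\xi_s^c)$ is through $c^{s-s_0}$ and $c^{-s-s_0}$ respectively; after taking residue at $s_0$ you get a $c$-independent term from the first and a $c^{-2s_0}$ term from the second. There is no $\log c$, and the $c$-dependent parts do not cancel against each other --- rather, the $\xi_s^c$ residue tends to $0$ as $c\to\infty$.

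Second, and more seriously, your argument for part~\eqref{item:6} is incorrect as stated. You assert that ``the only positive pole of either term is at $s_0$'', but $I_{\Omega_{0,1}}(\xi_s^c)$ is the product of \eqref{eq:int-omega01-intertwine-part-over-t} and \eqref{eq:int-omega01-intertwine-part-part-2}, the latter involving $M(w_0,s)f_s$. If $s_1\in\cP_1(\chi,\sigma)$ with $s_1\neq s_0$, then $M(w_0,s)$ has a pole at $s_1$ and so does $I_{\Omega_{0,1}}(\xi_s^c)$. Thus $\Res_{s=s_1}\bigl[I_{\Omega_{0,1}}(\xi_{c,s})-I_{\Omega_{0,1}}(\xi_s^c)\bigr]$ is \emph{not} zero; it equals $-\frac{c^{-s_1-s_0}}{s_1+s_0}\cdot(\text{something nonzero in general})$. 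Your $c\to\infty$ scheme could in principle absorb this, but you have not said so, and you have not justified why the limit of the left side is the untruncated period.

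The paper avoids both issues by a cleaner device. Instead of sending $c\to\infty$, it writes
\[
\cE_{-1}(g,f_{s_1}) \;=\; \Lambda^c\cE_{-1}(g,f_{s_1}) \;+\; \theta^c(g),
\qquad
\theta^c(g)=\sum_{\gamma\in Q_1\backslash G(X_1)} \Res_{s=s_1} M(w_0,s)f_s(\gamma g)\,\hat{\tau}^c(H(\gamma g)),
\]
shows (via Lemmas~\ref{lemma:conv-01}--\ref{lemma:conv-22}) that $\int_{[G(Z_1)]}\theta^c(g)\overline{\theta}\,dg$ is absolutely convergent for $c$ large and $s_1>0$, and observes that this integral equals $\Res_{s=s_1} I(\xi_s^c)$ because the factor $\frac{c^{-s-s_0}}{s+s_0}$ in \eqref{eq:int-omega01-intertwine-part-over-t} is holomorphic for $\Re s>0$. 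Adding, the $I(\xi_s^c)$ contributions cancel \emph{exactly} and one obtains
\[
\int_{[G(Z_1)]}\cE_{-1}(g,f_{s_1})\,\overline{\theta}\,dg \;=\; \Res_{s=s_1} I(\xi_{c,s}),
\]
valid for every sufficiently large $c$. This is already $c$-independent, has its only pole at $s_0$ by \eqref{eq:int-omega01-over-t}, and the decomposition $\cE_{-1}=\Lambda^c\cE_{-1}+\theta^c$ gives absolute convergence of \eqref{eq:period-residue-2} for free. Parts~\eqref{item:5}--\eqref{item:7} then follow immediately from Prop.~\ref{prop:nonvan-I-omega01}, with no limit in $c$ and no need to track poles of $M(w_0,s)$.
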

\begin{proof}
  We have seen that $s_0$ is a pole for the Eisenstein series in Thm.~\ref{thm:dist->Eis-pole} and since we assume $s_0>0$, it is simple. This proves part \eqref{item:4}. Next we study the residue.
  Consider the truncated residue
  \begin{equation*}
    \Lambda^c \cE_{-1} (g,f_{s_1}) = \cE_{-1} (g,f_{s_1}) - \sum_{\gamma \in Q_1 \lmod G (X_1)}
\cE_{-1,Q_1} (\gamma g,f_{s_1})     \hat {\tau}^c (H
  (\gamma g)).
\end{equation*}
Set
\begin{align*}
  \theta^c(g):=&\sum_{\gamma \in Q_1 \lmod G (X_1)}\cE_{-1,Q_1} (\gamma g,f_{s_1})     \hat {\tau}^c (H  (\gamma g))\\
  =& \sum_{\gamma \in Q_1 \lmod G (X_1)} \res_{s=s_1} M(w_0,s)f_s(\gamma g)\hat {\tau}^c (H
     (\gamma g)).
\end{align*}
Again we  show absolute convergence by cutting up the integrals into several parts. Then by Lemmas~\ref{lemma:conv-01}, \ref{lemma:conv-10}, \ref{lemma:conv-11}, \ref{lemma:conv-22}, we see that
\begin{equation}\label{eq:I-omega-trunc-residue}
  \int_{[G(Z_1)]} \theta^c(g) \overline {\theta_{\psi,\chi,\chi_2,X_1,Y} (g,1,\Phi)} dg
\end{equation}
is absolutely convergent when $c$ is large enough and $s_1>0$. Note that as we assume that $s_0>0$, the quantity $\frac{c^{-s-s_0}}{s+s_0}$ in \eqref{eq:int-omega01-intertwine-part-over-t} will not produce a residue at $s=s_1>0$. Hence \eqref{eq:I-omega-trunc-residue} is equal to $\res_{s=s_1} I(\xi_s^c)$. Thus the period integral of the residue twisted by the theta series is 
\begin{align}\label{eq:period-residue}
  &    \int_{[G(Z_1)]} \cE_{-1}(g,f_{s_1}) \overline {\theta_{\psi,\chi,\chi_2,X_1,Y} (g,1,\Phi)} dg \\
  =&   \int_{[G(Z_1)]} (\Lambda^c\cE_{-1}(g,f_{s_1})+\theta^c(g)) \overline {\theta_{\psi,\chi,\chi_2,X_1,Y} (g,1,\Phi)} dg \nonumber\\
  =& \res_{s=s_1} (I(\xi_{c,s}) - I(\xi_s^c)) + \res_{s=s_1} I(\xi_s^c) \nonumber \\
     =& \res_{s=s_1} I(\xi_{c,s}) \nonumber. 
\end{align}
The first equality shows in addition that \eqref{eq:period-residue} is absolutely convergent.

If $s_1\neq s_0$, then \eqref{eq:period-residue}  vanishes. This proves part \eqref{item:6}. If we change the domain of integration to $[G(Z''_1)]$ with $\dim Z''>\dim Z$, then   Prop.~\ref{prop:nonvan-I-omega01}\eqref{item:11} shows that \eqref{eq:period-residue}  vanishes and we prove part \eqref{item:7}. Now assume $s_1=s_0$. Then Prop.~\ref{prop:nonvan-I-omega01}\eqref{item:12} shows that \eqref{eq:period-residue}  does not vanish for some choice of data and we prove part \eqref{item:5}.
\end{proof}

By Prop.~\ref{prop:period-X->Y}, when $\FO_{\psi,\chi}^Y(\sigma) = \dim Y +2r$, we have such a $Z$ so that the assumptions of Thms.~\ref{thm:dist->Eis-pole}, \ref{thm:dist->period-residue-Eis} are satisfied.
 Thus we get
\begin{cor} \label{cor:FO->Eis-pole}
  Let $\sigma \in \cA_\cusp (G (X))$ and let $Y$ be a
  (possibly trivial) anisotropic Hermitian space. Assume $\FO_{\psi,\chi}^Y(\sigma) = \dim Y +2r$.
  Set  $s_0=\half (\dim X -( \dim Y +2r) +1)$.
 Then $s_0$ is a  pole of $E(g,f_s)$ for some $f_s\in \cA_1 (s,\chi,\sigma)$.
\end{cor}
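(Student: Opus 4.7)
The plan is to derive the corollary as a direct consequence of Thm.~\ref{thm:dist->Eis-pole}, with Prop.~\ref{prop:period-X->Y} supplying exactly the distinction and non-distinction input that Thm.~\ref{thm:dist->Eis-pole} requires. The entire substantive work has already been done in those two results; here one just has to match parameters.

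First, I would invoke Prop.~\ref{prop:period-X->Y}\eqref{item:3} with $r_0=r$. This produces a non-degenerate skew-Hermitian subspace $Z\subseteq X$ with $\dim X-\dim Z=r$ for which $\sigma\otimes\overline{\Theta_{\psi,\chi,\chi_2,X,Y}}$ is $G(Z)$-distinguished. With this $Z$, the quantity $s_0=\tfrac12(\dim X-(\dim Y+2r)+1)$ in the hypothesis of Thm.~\ref{thm:dist->Eis-pole} coincides with the $s_0$ in the statement of the corollary, so no re-indexing is required.

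Next, I would verify the non-distinction hypothesis of Thm.~\ref{thm:dist->Eis-pole}: for any non-degenerate skew-Hermitian subspace $Z'$ of $X$ and any totally isotropic subspace $L'\subseteq Z'$ with $\dim L'\in\{0,1\}$ and $\dim Z'-\dim L'>\dim Z$, the representation $\sigma\otimes\overline{\Theta_{\psi,\chi,\chi_2,X,Y}}$ must fail to be $J(Z',L')$-distinguished. Rearranging the inequality gives
\begin{equation*}
  \dim X-\dim Z'+\dim L'<\dim X-\dim Z=r=r_0,
\end{equation*}
which (taking the auxiliary $r$ in Prop.~\ref{prop:period-X->Y}\eqref{item:1} to be $0$, as the remark following the proposition allows) is precisely the numerical condition under which Prop.~\ref{prop:period-X->Y}\eqref{item:1} delivers the desired non-distinction.

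With both hypotheses of Thm.~\ref{thm:dist->Eis-pole} now in place, its conclusion yields $s_0\in\cP_1(\chi,\sigma)$ and the corollary follows. There is no real obstacle at this stage; the genuine difficulty has been absorbed into the proof of Thm.~\ref{thm:dist->Eis-pole} (the orbit decomposition and truncation analysis of Sections~\ref{sec:comp-integrals}--\ref{sec:absol-conv-integr}) and into the Fourier-coefficient / first-occurrence argument underlying Prop.~\ref{prop:period-X->Y}. The only point requiring a moment of care is the bookkeeping just mentioned: the symbol $r$ plays two distinct roles in Prop.~\ref{prop:period-X->Y} (the codimension parameter, and the Witt-tower height of the target), and one must use the remark after the proposition to set the latter to zero so that the inequality used here reads off cleanly.
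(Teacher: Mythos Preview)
Your proposal is correct and follows exactly the route the paper takes: the paper simply remarks that Prop.~\ref{prop:period-X->Y} furnishes a $Z$ satisfying the distinction and non-distinction hypotheses of Thm.~\ref{thm:dist->Eis-pole}, and then the corollary is immediate. Your write-up is in fact more explicit than the paper's one-line justification, correctly unwinding the inequality $\dim Z'-\dim L'>\dim Z$ into the form required by Prop.~\ref{prop:period-X->Y}\eqref{item:1} (with the Witt-tower parameter set to~$0$). One tiny wording slip: in your final sentence you write ``$s_0\in\cP_1(\chi,\sigma)$'', but $\cP_1$ is by definition the set of \emph{positive} poles, and neither Thm.~\ref{thm:dist->Eis-pole} nor this corollary assumes $s_0>0$; the correct conclusion is simply that $s_0$ is a pole of $E(g,f_s)$ for some $f_s$, exactly as stated.
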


\begin{cor} \label{cor:FO->period-residue-Eis}
   Let $\sigma \in \cA_\cusp (G (X))$ and let $Y$ be a
  (possibly trivial) anisotropic Hermitian space. Assume $\FO_{\psi,\chi}^Y(\sigma) = \dim Y +2r$.
  Set  $s_0=\half (\dim X -( \dim Y +2r) +1)$. Assume further that $s_0 >0$. Then the following hold.
  \begin{enumerate}
  \item $s_0$ is in $\cP_1 (\chi,\sigma)$ and it is a
simple pole. 
  \item \label{item:2}$\cE_{-1}(s_0,\chi,\sigma)\otimes \overline{\Theta_{\psi,\chi,\chi_2,X_1,Y}}$ is $G(Z_1)$-distinguished for some non-degenerate skew-Hermitian subspace $Z$ of $X$ such that $\dim X - \dim Z = r$.
    \item $\cE_{-1}(s_1,\chi,\sigma)\otimes \overline{\Theta_{\psi,\chi,\chi_2,X_1,Y}}$
     is not $G(Z_1)$-distinguished for  any $0<s_1\neq s_0$ and for any non-degenerate skew-Hermitian subspace $Z$ of $X$.
   \item For any non-degenerate skew-Hermitian subspace $Z''$ of $X$ with $\dim Z'' > \dim X -r$, $\cE_{-1}(s_1,\chi,\sigma)\otimes \overline{\Theta_{\psi,\chi,\chi_2,X_1,Y}}$ is not $G(Z''_1)$-distinguished for  any $s_1>0$. 
\end{enumerate}
\end{cor}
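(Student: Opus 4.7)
The plan is to deduce the corollary directly from Theorem~\ref{thm:dist->period-residue-Eis} by using Proposition~\ref{prop:period-X->Y} to supply its two hypotheses.

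First I extract the required subspace $Z$. The assumption $\FO_{\psi,\chi}^Y(\sigma) = \dim Y + 2r$ places us in the setting of Proposition~\ref{prop:period-X->Y} with its $r_0$ equal to the present $r$. Part~\eqref{item:3} of that proposition furnishes a non-degenerate skew-Hermitian subspace $Z \subseteq X$ with $\dim X - \dim Z = r$ such that $\sigma \otimes \overline{\Theta_{\psi,\chi,\chi_2,X,Y}}$ is $G(Z)$-distinguished, which is precisely the distinction hypothesis of Theorem~\ref{thm:dist->period-residue-Eis}. For the remaining hypothesis, I apply Proposition~\ref{prop:period-X->Y}\eqref{item:1} with the parameter ``$r$'' of that proposition set to $0$, so that the Hermitian space on the theta side remains $Y$. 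The conclusion is that $\sigma \otimes \overline{\Theta_{\psi,\chi,\chi_2,X,Y}}$ is not $J(Z',L')$-distinguished whenever $\dim X - \dim Z' + \dim L' < r$, equivalently $\dim Z' - \dim L' > \dim X - r = \dim Z$. Restricted to $\dim L' \in \{0,1\}$ this is exactly the non-distinction condition demanded by Theorem~\ref{thm:dist->period-residue-Eis}.

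With both hypotheses now in place and $s_0 > 0$ by assumption, the conclusions of Theorem~\ref{thm:dist->period-residue-Eis} transcribe directly into those of the corollary: item~\eqref{item:4} yields the simple-pole claim of part~(1); item~\eqref{item:5} gives $G(Z_1)$-distinction of $\cE_{-1}(s_0,\chi,\sigma) \otimes \overline{\Theta_{\psi,\chi,\chi_2,X_1,Y}}$ for the $Z$ just produced, i.e.\ part~\eqref{item:2}; item~\eqref{item:6} gives the non-distinction statement of part~(3) for that same $Z$; and item~\eqref{item:7}, applied to an arbitrary $Z''$ with $\dim Z'' > \dim Z = \dim X - r$, gives part~(4).

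The main point requiring care in this chain is the translation in the second step: the inequality $\dim X - \dim Z' + \dim L' < r$ that arises naturally from the first-occurrence side and the inequality $\dim Z' - \dim L' > \dim Z$ that appears on the Eisenstein-series side of Theorem~\ref{thm:dist->period-residue-Eis} must be recognised as the same constraint, and matching them is the only place where the precise equality $\dim X - \dim Z = r$ is used. No other substantive obstacle arises: once the hypotheses are lined up, the corollary follows by direct quotation.
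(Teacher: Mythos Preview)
Your argument is correct and follows precisely the paper's approach: the paper's proof is the single sentence that Proposition~\ref{prop:period-X->Y} furnishes a $Z$ satisfying the hypotheses of Theorem~\ref{thm:dist->period-residue-Eis}, and you have simply made the matching of the distinction and non-distinction hypotheses explicit. Your honest remark that item~\eqref{item:6} yields part~(3) only ``for that same $Z$'' is accurate, and the paper's one-line deduction is no more detailed on this point; the extension to other $Z$ with $\dim X-\dim Z\le r$ is harmless since Proposition~\ref{prop:period-X->Y}\eqref{item:1} still supplies the non-distinction hypothesis (the only one actually used in proving items~\eqref{item:6} and~\eqref{item:7}), and the case $\dim Z>\dim X-r$ is absorbed into part~(4).
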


Using Cor.~\ref{cor:FO->Eis-pole}, we can now strengthen Part~\eqref{item:LO-pole-ineq} of Thm.~\ref{thm:Eis-pole-LO} to an equality.
\begin{cor}\label{cor:Eis-pole=LO}
  Let  $\sigma\in\cA_\cusp(G(X))$. Then  $s_0$ is the maximal pole of the Eisenstein series $E(g,f_s)$ for  $f_s$ running over $\cA_1(s,\chi,\sigma)$ if and only if $\LO_{\psi,\chi} (\sigma) =  \dim X + 1 - 2 s_0$.
\end{cor}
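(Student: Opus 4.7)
The plan is to deduce the corollary from two results already in the excerpt: Part~\eqref{item:LO-pole-ineq} of Theorem~\ref{thm:Eis-pole-LO}, which gives the inequality $\LO_{\psi,\chi}(\sigma)\le \dim X+1-2s_0$ when $s_0$ is the maximal pole; and Corollary~\ref{cor:FO->Eis-pole}, which produces an actual pole of the Eisenstein series once a Witt tower realising the lowest occurrence is chosen. These two facts turn out to be enough, and the remaining work is just unwinding the definition of $\LO$ as a minimum over Witt towers.

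For the forward direction (maximal pole implies the equality), I would assume $s_0$ is the maximal element of $\cP_1(\chi,\sigma)$ and combine the two cited results. Theorem~\ref{thm:Eis-pole-LO}\eqref{item:LO-pole-ineq} gives $\LO_{\psi,\chi}(\sigma)\le \dim X+1-2s_0$ immediately. For the reverse inequality, choose an anisotropic Hermitian space $Y$ (compatible with $\chi$) so that $\FO_{\psi,\chi}^{Y}(\sigma)=\LO_{\psi,\chi}(\sigma)$; such $Y$ exists by the very definition of lowest occurrence. Write $\FO_{\psi,\chi}^{Y}(\sigma)=\dim Y+2r$ and set $s_0'=\tfrac12(\dim X-(\dim Y+2r)+1)=\tfrac12(\dim X+1-\LO_{\psi,\chi}(\sigma))$. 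By Corollary~\ref{cor:FO->Eis-pole}, $s_0'$ is a pole of $E(g,f_s)$ for some $f_s\in\cA_1(s,\chi,\sigma)$, hence $s_0'\le s_0$, which rearranges to $\LO_{\psi,\chi}(\sigma)\ge \dim X+1-2s_0$. Combining the two inequalities yields equality.

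For the reverse direction, assume $\LO_{\psi,\chi}(\sigma)=\dim X+1-2s_0$. Choosing $Y$ as above and invoking Corollary~\ref{cor:FO->Eis-pole} shows that $s_0$ itself is a pole of $E(g,f_s)$ for some section, so $\cP_1(\chi,\sigma)$ is non-empty and has a maximal element $s_{\max}\ge s_0$. Applying the forward direction just proved to $s_{\max}$ gives $\LO_{\psi,\chi}(\sigma)=\dim X+1-2s_{\max}$, so $s_{\max}=s_0$ and $s_0$ is indeed the maximal pole.

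There is no real obstacle here beyond the bookkeeping of parities and of compatibility between $\chi$ and $Y$: one only needs to know that a Witt tower realising $\LO_{\psi,\chi}(\sigma)$ exists, which is immediate since $\LO$ is defined as a minimum over a non-empty finite set of Witt-tower first occurrences. The main content of the corollary is entirely carried by Corollary~\ref{cor:FO->Eis-pole}, which supplies the converse inequality missing from \cite{MR3435720}; the rest is a short bootstrapping argument.
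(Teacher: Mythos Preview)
Your proposal is correct and follows essentially the same argument as the paper's proof: both directions combine Theorem~\ref{thm:Eis-pole-LO}\eqref{item:LO-pole-ineq} with Corollary~\ref{cor:FO->Eis-pole}, first deriving the reverse inequality by realising $\LO_{\psi,\chi}(\sigma)$ as a first occurrence in some Witt tower and then bootstrapping for the converse direction. The only cosmetic difference is that the paper writes the auxiliary pole as $s_1$ rather than $s_0'$ or $s_{\max}$.
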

\begin{proof}
  Assume that $s_0$ is the maximal pole of the Eisenstein series. By Thm.~\ref{thm:Eis-pole-LO}, we already know $\LO_{\psi,\chi} (\sigma) \le  \dim X + 1 - 2 s_0$. Thus $\LO_{\psi,\chi} (\sigma) = \dim X + 1 - 2 s_1$ for some $s_1 \ge s_0$. The lowest occurrence is a first occurrence in a certain Witt tower. Thus by Cor.~\ref{cor:FO->Eis-pole}, $\half(\dim X - (\dim X + 1 - 2 s_1) +1) = s_1$ is a pole of the Eisenstein series. As $s_0$ is assumed to be maximal, we are forced to have $s_1=s_0$, which means that $\LO_{\psi,\chi} (\sigma) =  \dim X + 1 - 2 s_0$ as desired.

  Next assume that $\LO_{\psi,\chi} (\sigma) =  \dim X + 1 - 2 s_0$ for some $s_0$. Again by Cor.~\ref{cor:FO->Eis-pole}, $\half(\dim X - (\dim X + 1 - 2 s_0) +1) = s_0$ is a pole of the Eisenstein series. Assume that $s_1$ is the maximal pole of the Eisenstein series. By what we have shown, $\LO_{\psi,\chi} (\sigma) =  \dim X + 1 - 2 s_1$. Thus $s_1=s_0$; in other words, $s_0$ is the maximal pole of the Eisenstein series.
\end{proof}


\section{Computation of Values of Integrals}
\label{sec:comp-integrals}
We have cut the period integral of the truncated Eisenstein series twisted by a theta function into pieces \eqref{eq:cut-into-pieces} and we proceed to compute their values assuming absolute convergence which is treated in Sec.~\ref{sec:absol-conv-integr}.

\subsection{The value of $I_{\Omega_{0,1}} (\xi)$}
\label{sec:series-over-omega_01}

 The set $\Omega_{0,1}$ consists of one $G (Z_1)$-orbit. We take the representative $1$ which corresponds to the isotropic line $\ell_1^- = Ee_1^-$ in $X_1$ or even in $Z_1$. The stabiliser of $Ee_1^-$ in $G (Z_1)$ is the parabolic subgroup $Q (Z_1,Ee_1^-)$.

Thus we find
\begin{align*}
  I_{\Omega_{0,1}} (\xi)=&\sum_{x\in\Omega_{0,1}  / G (Z_1)} \int_{G (Z_1)^{\gamma_x}(F) \lmod G (Z_1) (\A)}  \xi (\gamma_x g) dg \\
  = &\int_{Q(Z_1,e_1^-)(F)\lmod G(Z_1)(\AA)} \xi (g)dg.
\end{align*}
Using the Iwasawa decomposition, we write $g=nm_1(t)hk$ for $n\in N(Z_1,e_1^-)(\AA)$, $t\in \RGL_1(\AA)$, $h\in G(Z)(\AA)$ and $k\in K_{G(Z_1)}$.
Thus, by \cite[I.1.13]{MR1361168}, the integral above is equal to
\begin{align*}
  \int_{K_{G(Z_1)}}  \int_{[G(Z)]} \int_{[\RGL_1]}\int_{[N(Z_1,e_1^-)]} \xi(nm_1(t)hk) |t|^{-2\rho_{Q(Z_1,e_1^-)}} dn dt dh dk.
\end{align*}

First let $\xi=\xi_{c,s}$. We get
\begin{align}
  &\int_{K_{G(Z_1)}}  \int_{[G(Z)]} \int_{[\RGL_1]}\int_{[N(Z_1,e_1^-)]} \overline{\theta_{\psi,\chi_1,\chi_2,X_1,Y} (nm_1(t)hk, 1, \Phi)} \nonumber\\
  & \quad f_s (nm_1(t)hk)\hat {\tau}_c (H (nm_1(t)hk)) |t|^{-2\rho_{Q(Z_1,e_1^-)}} dn dt dh dk \nonumber\\
  \label{eq:int-omega01-1}
  =& \int_{K_{G(Z_1)}}  \int_{[G(Z)]} \int_{[\RGL_1]}\int_{[N(Z_1,e_1^-)]} \overline{\theta_{\psi,\chi_1,\chi_2,X_1,Y} (nm_1(t)hk, 1, \Phi)} \nonumber\\
  & \quad \chi_1(t) |t|^{s+\rho_{Q_1}} 
   f_s (hk)\hat {\tau}_c (H (m_1(t))) |t|^{-2\rho_{Q(Z_1,e_1^-)}} dn dt dh dk.
\end{align}
Only the theta term depends on $n$. We  compute the inner integral over $dn$.
Note that $N(Z_1,e_1^-)$ is isomorphic to $\Hom_E (\ell_1^+, Z) \rtimes \Herm_1$. See, for example, \cite[(2.16)]{MR3071813}. We parametrise
  $n\in N(Z_1,e_1^-) $   as $n(\mu,\beta)$ for $\mu\in \Hom_E (\ell_1^+, Z)$ and $\beta\in \Herm_1$. We use the mixed model for the Weil representation. Then we get
\begin{align*}
  &\int_{[N(Z_1,e_1^-)]} \theta_{\psi,\chi_1,\chi_2,X_1,Y} (nm_1(t)hk, 1, \Phi)  dn \\
   =&\int_{[\Hom_E (\ell_1^+, Z)]}\int_{[\Herm_1]} \sum_{y\in Y}\sum_{w\in R_{E/F} (Y\otimes X)^+}\omega_{\psi,\chi_1,\chi_2,X_1,Y} (n (\mu,\beta) m_1(t)hk,1)\Phi (y,w)  d\beta d\mu  \\
  =&\int_{[\Hom_E (\ell_1^+, X)]}\int_{[\Herm_1]} \sum_{y\in Y}\sum_{w\in R_{E/F} (Y\otimes X)^+}\omega_{\psi,\chi_1,\chi_2,X_1,Y} (n (\mu,0) m_1(t)h k,1)\Phi (y,w) \\
  & \quad \psi_E (\half \form{y}{y}_{Y}\beta) d\beta d\mu  .
\end{align*}
The integration over $\beta$ vanishes unless $y=0$ since $Y$ is anisotropic. We continue the computation:
\begin{align*}
  &\int_{[\Hom_E (\ell_1^+, Z)]}\sum_{w\in R_{E/F} (Y\otimes X)^+}\omega_{\psi,\chi_1,\chi_2,X_1,Y} (n (\mu,0) m (t,h) k,1)\Phi (0,w)d\mu  \\
=&\sum_{w\in R_{E/F} (Y\otimes X)^+}\omega_{\psi,\chi_1,\chi_2,X_1,Y} ( m_1 (t)h k,1)\Phi (0,w)
\end{align*}
by (2.19) of \cite{MR3071813}.
Writing out the explicit action of $t$,  we get
\begin{equation*}
  \chi_1(t)|t|^{\dim Y/2}\theta_{\psi,\chi_1,\chi_2,X,Y} ( h, 1, \Phi'_k)
\end{equation*}
where $\Phi'_k (\cdot)= \omega_{\psi,\chi_1,\chi_2,X_1,Y} (k,1)\Phi (0,\cdot)$ lies in $\cS (R_{E/F} (Y\otimes X)^+(\AA))$.
Plugging  the above into \eqref{eq:int-omega01-1}, we then consider the inner integral over $t$ and this gives
\begin{align*}
 \int_{[\RGL_1]} |t|^{\half\dim Y + s +\rho_{Q_1}-2\rho_{Q(Z_1,e_1^-)}} \hat {\tau}_c (H (m_1 (t)))  d t
= \vol ( E^\times \lmod \A_E^1) \int_{0}^c t^{s-s_0}  d^\times t
\end{align*}
where  we set $r=\dim X - \dim Z$ and $s_0=\half (\dim X -( \dim Y +2r) +1)$. Assume $\Re s > s_0$. Then the above is absolutely convergent and is equal to
\begin{align}\label{eq:int-omega01-over-t}
   \vol ( E^\times \lmod \A_E^1) \frac{c^{s-s_0}}{s-s_0}.
\end{align}
Hence $I_{\Omega_{0,1}}(\xi_{c,s})$ is equal to \eqref{eq:int-omega01-over-t} times
\begin{align}\label{eq:int-omega01-part-2}
  \int_{K_{G(Z_1)}} \int_{[G (Z)]} \overline{\theta_{\psi,\chi_1,\chi_2,X,Y} ( h, 1, \Phi'_k)} f_s (h k) dh dk.
\end{align}
This expression gives the meromorphic continuation of $I_{\Omega_{0,1}}(\xi_{c,s})$ to the whole complex plane.

Next let $\xi=\xi_s^c$. An analogous computation shows that if $\Re s > -s_0$ then
$I_{\Omega_{0,1}}(\xi_s^c)$ is equal to the product of 
\begin{align}
&  \int_{[\RGL_1]} |t|^{\half\dim Y - s +\rho_{Q_1}-2\rho_{Q(Z_1,e_1^-)}} \hat {\tau}^c (H (m_1 (t)))  d t \nonumber \\
  =& \vol ( E^\times \lmod \A_E^1) \int_{c}^\infty t^{-s-s_0}  d^\times t \nonumber \\
     =& \vol ( E^\times \lmod \A_E^1) \frac{c^{-s-s_0}}{s+s_0} \label{eq:int-omega01-intertwine-part-over-t}
\end{align}
and
\begin{equation}\label{eq:int-omega01-intertwine-part-part-2}
  \int_{K_{G(Z_1)}} \int_{[G (Z)]} \overline{\theta_{\psi,\chi_1,\chi_2,X,Y} ( h, 1, \Phi'_k)} M(w_0,s)f_s (h k) dh dk.
\end{equation}
This expression gives the meromorphic continuation of $I_{\Omega_{0,1}}(\xi_s^c)$ to the whole complex plane.

To summarise, we have
\begin{prop} \label{prop:nonvan-I-omega01}
  Let $Z$ be a non-degenerate skew-Hermitian subspace of $X$. Set $r=\dim X - \dim Z$ and $s_0=\half (\dim X -( \dim Y +2r) +1)$. Let $\xi_{c,s}$ and $\xi_c^s$ be as in \eqref{eq:defn_xi}. Then the following holds
  \begin{enumerate}
  \item \label{item:11}If $\sigma\otimes\overline{\Theta_{\psi,\chi_1,\chi_2,X,Y}}$ is not $G(Z)$-distinguished, then both $I_{\Omega_{0,1}}(\xi_{c,s})$ and $I_{\Omega_{0,1}}(\xi_s^c)$ vanish.
  \item $I_{\Omega_{0,1}}(\xi_{c,s})$ does not have a pole at $s\neq s_0$.
  \item \label{item:12}If $\sigma\otimes\overline{\Theta_{\psi,\chi_1,\chi_2,X,Y}}$ is  $G(Z)$-distinguished, then the residue of $I_{\Omega_{0,1}}(\xi_{c,s})$ at $s=s_0$ is equal to
    \begin{equation}\label{eq:residue}
      \vol ( E^\times \lmod \A_E^1)\int_{K_{G(Z_1)}} \int_{[G (Z)]} \overline{\theta_{\psi,\chi_1,\chi_2,X,Y} ( h, 1, \Phi'_k)} f_{s_0} (h k) dh dk,
    \end{equation}
    where $\Phi'_k (\cdot)= \omega_{\psi,\chi_1,\chi_2,X_1,Y} (k,1)\Phi (0,\cdot)$ and for some choice of data, namely, $\Phi\in\cS_{X_1,Y}(\AA)$ and $f_s\in\cA_1(s,\chi_1,\sigma)$, the residue is non-vanishing.
  \end{enumerate}
\end{prop}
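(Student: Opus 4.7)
The three claims are consequences of the explicit formulas derived in the computations immediately preceding the proposition: $I_{\Omega_{0,1}}(\xi_{c,s})$ factorises as the product of \eqref{eq:int-omega01-over-t} with \eqref{eq:int-omega01-part-2}, while $I_{\Omega_{0,1}}(\xi_s^c)$ factorises as the product of \eqref{eq:int-omega01-intertwine-part-over-t} with \eqref{eq:int-omega01-intertwine-part-part-2}. The central observation tying the three parts together is that, for each fixed $k\in K_{G(Z_1)}$, the inner integral
\[
\int_{[G(Z)]} \overline{\theta_{\psi,\chi_1,\chi_2,X,Y}(h,1,\Phi'_k)}\, f_s(hk)\, dh
\]
is precisely a period pairing on $\sigma\otimes\overline{\Theta_{\psi,\chi_1,\chi_2,X,Y}}$ testing $G(Z)$-distinction. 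Indeed, applying the Iwasawa decomposition of $k$ viewed in $G(X_1)(\AA)=Q_1(\AA)K_{G(X_1)}$ and using that $G(X)$ normalises $N_1$ inside $M_1$, the function $h\mapsto f_s(hk)$ restricted to $G(X)(\AA)$ is a non-zero scalar multiple of a right-translate of a vector in $\sigma$, while $\Phi'_k$ lies in $\cS(R_{E/F}(Y\otimes X)^+(\AA))$.

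From this, part~(1) is immediate: if no non-zero $G(Z)$-invariant period on $\sigma\otimes\overline{\Theta_{\psi,\chi_1,\chi_2,X,Y}}$ exists, the inner integral vanishes for every $k$, so the outer $K_{G(Z_1)}$-integral vanishes; the same argument applies verbatim to $I_{\Omega_{0,1}}(\xi_s^c)$, since the intertwining operator $M(w_0,s)$ only modifies the $\RGL_1$-factor and induction parameter, leaving $\sigma$ as the cuspidal datum on the $G(X)$-factor. Part~(2) follows because \eqref{eq:int-omega01-part-2} is entire in $s$ (the flat section $f_s$ is holomorphic in $s$, and the integrations introduce no further $s$-dependent poles), so the only pole of $I_{\Omega_{0,1}}(\xi_{c,s})$ comes from the prefactor $c^{s-s_0}/(s-s_0)$, which has a unique simple pole at $s=s_0$. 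The residue formula \eqref{eq:residue} in part~(3) then follows from $\res_{s=s_0} c^{s-s_0}/(s-s_0)=1$.

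The only substantive step is the non-vanishing in part~(3). The plan is to start from the $G(Z)$-distinction hypothesis to pick $\phi\in\sigma$ and $\Phi_0\in\cS_{X,Y}(\AA)$ with $\int_{[G(Z)]}\overline{\theta_{\psi,\chi_1,\chi_2,X,Y}(h,1,\Phi_0)}\,\phi(h)\,dh\neq 0$. Then build $\Phi\in\cS_{X_1,Y}(\AA)$ whose $y=0$ mixed-model component reproduces $\Phi_0$, and a flat section $f_s\in\cA_1(s,\chi,\sigma)$ whose restriction to $G(X)(\AA)$ at $s=s_0$ coincides with $\phi$. By construction the fibrewise value of the integrand at $k=1$ is the non-zero distinction period.

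The \emph{main obstacle} is to upgrade this pointwise non-vanishing to non-vanishing of the full integral over $K_{G(Z_1)}$, since $K$-averaging could a priori produce cancellation. The plan to resolve this is a standard concentration argument: by replacing $f_s$ and $\Phi$ by $K$-finite approximations concentrated in a small neighbourhood of $1$ in $K_{G(Z_1)}$ (or equivalently smearing by a smooth bump supported near the identity), one forces the $K$-integral to be approximated by its value at $k=1$ up to an arbitrarily small error, hence non-zero. Equivalently, one observes that the linear functional $(f_{s_0},\Phi)\mapsto$\eqref{eq:residue} on the space of data cannot be identically zero once the fibrewise pairing at some single $k$ is non-zero, since its vanishing locus would then be a proper subspace.
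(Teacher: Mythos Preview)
Your proposal is correct and follows essentially the same approach as the paper. The paper treats parts~(1)--(2) and the residue formula in~(3) as immediate consequences of the factorisations \eqref{eq:int-omega01-over-t}$\times$\eqref{eq:int-omega01-part-2} and \eqref{eq:int-omega01-intertwine-part-over-t}$\times$\eqref{eq:int-omega01-intertwine-part-part-2} derived just before the proposition, exactly as you do; for the non-vanishing in~(3) the paper gives only a remark sketching the same extension-plus-concentration idea (choose $\phi,\Psi$ realising the period, extend to $f_s,\Phi$, then arrange the $K_{G(Z_1)}$-average not to cancel) and defers the details to \cite[Prop.~4.1]{MR3805648}, which is precisely the argument you outline.
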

\begin{rmk}
  Only the part where the residue is non-vanishing for some choice of data is not clear from the discussion above. Since we assume that $\sigma\otimes\overline{\Theta_{\psi,\chi_1,\chi_2,X,Y}}$ is  $G(Z)$-distinguished, there exist $\phi\in\sigma$ and $\Psi\in\cS_{X,Y}(\AA)$ such that
  \begin{equation*}
    \int_{[G(Z)]} \phi(h)\overline{\Theta_{\psi,\chi_1,\chi_2,X,Y}(h,1,\Psi)} dh
  \end{equation*}
  is non-vanishing. We can extend $\phi$ to a section $f_s\in\cA_1(s,\chi_1,\sigma)$ and $\Psi\in\cS_{X,Y}(\AA)$ to some $\Phi\in\cS_{X_1,Y}(\AA)$ in an appropriate fashion so that \eqref{eq:residue} is non-vanishing. This argument was carried out in great details in \cite[Prop.~4.1]{MR3805648} for the symplectic case and can be easily adapted to the unitary case. Thus we do not reproduce it here.
\end{rmk}

\subsection{The value of $I_{\Omega_{1,0}} (\xi)$}
\label{sec:series-over-omega_10}

Since $G (Z_1)$ acts trivially on $\Omega_{1,0}$, the orbit representatives are just the isotropic lines in $V$, which we parametrise using a set of representatives of $E^\times \lmod (V - \{0\})$ that are isotropic. For each isotropic $v\in V - \{0\}$, we fix a dual vector $v^+\in V$ such that $\form{v^+}{v} = 1$. Then we choose $\gamma_v \in G(X_1)$ to be the element with the action $e_1^+ \leftrightarrow v^+$  and $e_1^- \leftrightarrow v$ and the identity action on the orthogonal complement. Thus
\begin{align}\label{eq:I_omega10}
  I_{\Omega_{1,0}} (\xi) &= \sum_{\substack{v \in E^\times \lmod (V - \{0\}) \\\text {isotropic}}} \int_{[G (Z_1)]}  \xi (\gamma_v g) dg \nonumber \\
  =& \sum_{\substack{v \in E^\times \lmod (V - \{0\}) \\\text {isotropic}}} \int_{ [ G (Ev^+ \oplus Z \oplus Ev)] }  \xi (g\gamma_v) dg.
\end{align}

Let $\xi=\xi_{c,s}$.  The  restriction of the function $\xi (\cdot\ \gamma_v)$ on $G(X_1)(\AA)$ to $G(X)(\AA)$ is equal to
\begin{equation}\label{eq:restrict-xi-to-GX}
  g\in G(X)(\AA) \mapsto \overline{\theta_{\psi,\chi_1,\chi_2,X_1,Y} (g, 1, \omega_{\psi,\chi_1,\chi_2,X_1,Y}(\gamma_v)\Phi)} f_s (g\gamma_v).
\end{equation}
The theta function lies in $\overline{\Theta_{\psi,\chi_1,\chi_2,X,Y}}$ and  $f_s(\cdot\ \gamma_v)$ lies in the space of $\sigma$ for each fixed $\gamma_v$.
Thus the  integral in \eqref{eq:I_omega10} is a period integral for $\sigma\otimes \overline{\Theta_{\psi,\chi_1,\chi_2,X,Y}}$ over $[G (Ev^+ \oplus Z \oplus Ev)] $. The case of $\xi=\xi_s^c$ is similar. Thus we have shown:
\begin{prop}\label{prop:val-I-Omega10}
  If $\sigma\otimes \overline{\Theta_{\psi,\chi_1,\chi_2,X,Y}}$ is not $G(Z')$-distinguished for all non-degenerate skew-Hermitian subspace $Z'$ of $X$ with $Z' \supset Z$ and $\dim Z' = \dim Z +2$, then $I_{\Omega_{1,0}} (\xi)$ vanishes for $\xi = \xi_{c,s}$ and $\xi_s^c$. 
\end{prop}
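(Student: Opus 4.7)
The plan is to carry out the orbit unfolding already begun in \eqref{eq:I_omega10} and to recognise each resulting inner integral as a period of $\sigma\otimes\overline{\Theta_{\psi,\chi_1,\chi_2,X,Y}}$ over a subgroup of the required form. First I would note that, because $G(Z_1)$ acts trivially on the orthogonal complement $V$, the $G(Z_1)$-orbits on $\Omega_{1,0}$ are just the isotropic lines in $V$, and these are parametrised by isotropic $v\in E^\times\lmod(V-\{0\})$ together with a chosen dual vector $v^+\in V$ satisfying $\langle v^+,v\rangle=1$. Taking $\gamma_v\in G(X_1)$ to swap $e_1^+\leftrightarrow v^+$ and $e_1^-\leftrightarrow v$ and to act as the identity on the orthogonal complement, one checks that $\gamma_v G(Z_1)\gamma_v^{-1}=G(Z'_v)$ with $Z'_v:=Ev^+\oplus Z\oplus Ev\subset X$, so the substitution $g\mapsto\gamma_v g\gamma_v^{-1}$ rewrites each term of \eqref{eq:I_omega10} as $\int_{[G(Z'_v)]}\xi(g\gamma_v)\,dg$.

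The next step is to identify the restriction of $g\mapsto\xi(g\gamma_v)$ to $G(X)(\AA)\supset G(Z'_v)(\AA)$ as a vector in $\sigma\otimes\overline{\Theta_{\psi,\chi_1,\chi_2,X,Y}}$. Writing $\gamma_v=n_v m_1(t_v)h_v k_v$ in Iwasawa form in $G(X_1)$, and using that any $g\in G(X)$ commutes with $m_1(t_v)$ and normalises $N_1$ elementwise, one obtains
\begin{equation*}
f_s(g\gamma_v)=\chi_1(t_v)\,|t_v|^{s+\rho_{Q_1}}\,f_s(g h_v k_v),
\end{equation*}
which by the defining properties of $\cA_1(s,\chi_1,\sigma)$ lies in the space of $\sigma$ as a function of $g\in G(X)$. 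The intertwining-side section $M(w_0,s)f_s(g\gamma_v)$ is handled by the same argument, since the longest Weyl element $w_0$ acts only on the $\RGL_1$-factor of $M_1$ and preserves the $G(X)$-factor of the inducing data. For the theta factor, since $G(X)$ acts trivially on $\ell_1^+\oplus\ell_1^-$, the mixed-model description of $\omega_{\psi,\chi_1,\chi_2,X_1,Y}$ identifies the restriction of $g\mapsto\theta_{\psi,\chi_1,\chi_2,X_1,Y}(g,1,\omega_{\psi,\chi_1,\chi_2,X_1,Y}(\gamma_v)\Phi)$ to $G(X)(\AA)$ with a theta function in $\Theta_{\psi,\chi_1,\chi_2,X,Y}$ attached to a Schwartz function manufactured from $\omega_{\psi,\chi_1,\chi_2,X_1,Y}(\gamma_v)\Phi$.

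With these identifications, each term in the sum becomes a period integral over $[G(Z'_v)]$ of a form in $\sigma\otimes\overline{\Theta_{\psi,\chi_1,\chi_2,X,Y}}$. Since $Z'_v$ is non-degenerate, contains $Z$, and has dimension $\dim Z+2$, the non-distinction hypothesis forces each such period to vanish, whence $I_{\Omega_{1,0}}(\xi)=0$ for $\xi=\xi_{c,s}$ and $\xi=\xi_s^c$. The step I expect to require the most care is the theta-restriction claim in the previous paragraph: making precise the sense in which the $G(X)$-action in the $X_1$-Weil representation realises the $X$-Weil representation on an appropriate subspace of Schwartz functions is standard but fiddly in the mixed model, and one must verify that the splittings chosen to define the two Weil representations are compatibly normalised so that no extra character twist appears.
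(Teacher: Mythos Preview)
Your proposal is correct and follows essentially the same approach as the paper: unfold over the isotropic lines in $V$, conjugate by $\gamma_v$ to land on $[G(Ev^+\oplus Z\oplus Ev)]$, and recognise the integrand as a vector in $\sigma\otimes\overline{\Theta_{\psi,\chi_1,\chi_2,X,Y}}$. Two small remarks: the phrase ``normalises $N_1$ elementwise'' is not accurate (conjugation by $g\in G(X)$ moves individual elements of $N_1$), but only the fact that $gN_1g^{-1}=N_1$ together with the left $N_1(\AA)$-invariance of $f_s$ is needed, so your computation of $f_s(g\gamma_v)$ goes through unchanged; and the paper is just as brief as you are on the theta-restriction point, simply asserting that the restricted theta function lies in $\overline{\Theta_{\psi,\chi_1,\chi_2,X,Y}}$ without spelling out the mixed-model bookkeeping you flag.
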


\subsection{The value of $I_{\Omega_{1,1}} (\xi)$}
\label{sec:series-over-omega_11}

Now consider the action of $G (Z_1)$ on $\Omega_{1,1}$. An isotropic line $E (v+z)$ in $\Omega_{1,1}$ is in the same orbit as $E (v+e_1^-)$. Next we check when $E (v_1+e_1^-)$ and $E (v_2+e_1^-)$ are in the same orbit. This means that there exists $\gamma\in G (Z_1)$ such that $(v_1+e_1^-)\gamma = c (v_2+e_1^-)$ for some $c\in E^\times$. This gives $v_1=cv_2$ and $e_1^-\gamma = c e_1^-$. Thus a set of representatives for the $G (Z_1)$-orbits on $\Omega_{1,1}$ is $E (v+e_1^-)$ where $v$ runs over a set of representatives of $(V-\{0\})/E^\times$ that are isotropic. The stabiliser of $E (v+e_1^-)$ is $J (Z_1,Ee_1^-)$.

We see that
\begin{align}\label{eq:I_omege11}
  I_{\Omega_{1,1}} (\xi) =& \sum_{\substack{v \in E^\times \lmod (V - \{0\}) \\\text {isotropic}}}   \int_{J(Z_1,e_1^-)(F) \lmod G (Z_1) (\A)}  \xi (\gamma_{v+e_1^-} g) dg \nonumber \\
  =& \sum_{\substack{v \in E^\times \lmod (V - \{0\}) \\\text {isotropic}}} \int_{K_{G(Z_1)}} \int_{\RGL_1(\AA)}  \int_{[J(Z_1,e_1^-)]}    \xi (\gamma_{v+e_1^-} g m_1(t)k)  \nonumber \\ &|t|^{-2\rho_{Q(Z_1,e_1^-)}}dg dt dk .
\end{align}

For an isotropic $v\in V - \{0\}$, choose an isotropic  $v^+\in V$ such that $\form{v^+}{v}=1$. We may choose $\gamma_{v+e_1^-}$ to be the element in $G(X_1)$ determined by
\begin{align*}
  e_1^+ &\mapsto v^+ \\
  v^+ & \mapsto e_1^+ - v^+ \\
  v &\mapsto e_1^- \\
  e_1^- &\mapsto v+e_1^-
\end{align*}
with the identity action on the orthogonal complement. With respect to the basis $e_1^+,v^+,v,e_1^-$, we have
\begin{equation*}
  \gamma_{v+e_1^-} =
  \left(
  \begin{array}{cc|cc} 0&1&&\\ 1&-1&&\\ \hline &&&1\\ &&1&1
  \end{array} \right)
  =\left(
  \begin{array}{cc|cc} &1&&\\ 1&&&\\ \hline &&&1\\ &&1&
  \end{array} \right) \left( \begin{array}{cc|cc}
1&-1&&\\ &1&&\\ \hline &&1&1\\ &&&1
   \end{array} \right) =: \gamma_1\gamma_2.
  \end{equation*}
  It is clear that $\gamma_2$ commutes with $J(Z_1,e_1^-)(\AA)$ and $\gamma_1J(Z_1,e_1^-)(\AA) \gamma_1^{-1} = J(Ev^+\oplus Z \oplus Ev, v)(\AA)$. Thus \eqref{eq:I_omege11} is equal to
  \begin{align*}
    \sum_{\substack{v \in E^\times \lmod (V - \{0\}) \\\text {isotropic}}} \int_{K_{G(Z_1)}} \int_{\RGL_1(\AA)}  \int_{[J(Ev^+\oplus Z \oplus Ev, v)]}    \xi ( g \gamma_{v+e_1^-} m_1(t)k) |t|^{-2\rho_{Q(Z_1,e_1^-)}}dg dt dk.
  \end{align*}
For $\xi=\xi_{c,s}$ or $\xi_s^c$ as in \eqref{eq:defn_xi},   by a similar reasoning as in the paragraph that contains \eqref{eq:restrict-xi-to-GX}, we see that the innermost  integral is a period integral of $\sigma\otimes\overline{\Theta_{\psi,\chi_1,\chi_2,X,Y}}$ over $[J(Ev^+\oplus Z \oplus Ev, v)]$. Thus we get:
\begin{prop}\label{prop:val-I-Omega11}
    If $\sigma\otimes \overline{\Theta_{\psi,\chi_1,\chi_2,X,Y}}$ is not $J(Z',L')$-distinguished for all non-degenerate skew-Hermitian subspace $Z'$ of $X$ such that $Z' \supset Z$ and $\dim Z' = \dim Z +2$ and $L'$ is an isotropic line in the orthogonal complement of $Z$ in $Z'$, then $I_{\Omega_{1,1}} (\xi)$ vanishes for $\xi = \xi_{c,s}$ and $\xi_s^c$. 
\end{prop}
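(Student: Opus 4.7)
The plan is to adapt the argument used in Prop.~\ref{prop:val-I-Omega10} to the Jacobi stabiliser $J(Z_1,e_1^-)$ that appears in the present case. Starting from the identity displayed immediately above the statement,
\begin{equation*}
I_{\Omega_{1,1}}(\xi)=\sum_{\substack{v \in E^\times \lmod (V - \{0\}) \\ \text{isotropic}}}\int_{K_{G(Z_1)}}\int_{\RGL_1(\AA)}\int_{[J(Ev^+\oplus Z\oplus Ev,v)]}\xi(g\gamma_{v+e_1^-}m_1(t)k)\,|t|^{-2\rho_{Q(Z_1,e_1^-)}}\,dg\,dt\,dk,
\end{equation*}
I would show that for each fixed $v,t,k$ and for $\xi=\xi_{c,s}$ or $\xi_s^c$, the innermost integral is (up to a factor that is independent of $g$) a period integral of $\sigma\otimes \overline{\Theta_{\psi,\chi_1,\chi_2,X,Y}}$ over $[J(Ev^+\oplus Z\oplus Ev,v)]$; the non-distinction hypothesis then forces the inner integral to vanish, so $I_{\Omega_{1,1}}(\xi)=0$.

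The key intermediate steps, in order, are the following. \textbf{(i)} Extract the truncation factor $\hat\tau_c(H(g\gamma_{v+e_1^-}m_1(t)k))$, which is independent of $g$: because $V$ is the orthogonal complement of $Z$ in $X$, the space $Z':=Ev^+\oplus Z\oplus Ev$ is contained in $X$, so $J(Z',v)\subset G(X)\subset M_1$, and the Harish-Chandra map for $Q_1$ is left-invariant under $G(X)(\AA)$. \textbf{(ii)} Observe that $g\mapsto f_s(g\gamma_{v+e_1^-}m_1(t)k)$ is the right-translate of $f_s$ by the fixed element $\gamma_{v+e_1^-}m_1(t)k\in G(X_1)(\AA)$, hence again lies in $\cA_1(s,\chi_1,\sigma)$; its restriction to $G(X)(\AA)$ is therefore a vector in $\sigma$, and a fortiori so is its further restriction to $J(Z',v)(\AA)$. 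This is exactly the observation used in the proof of Prop.~\ref{prop:val-I-Omega10}. \textbf{(iii)} For the theta factor, use the orthogonal decomposition $X_1=X\oplus(\ell_1^+\oplus\ell_1^-)$ together with a compatible polarisation, so that $\cS_{X_1,Y}(\AA)\cong\cS_{X,Y}(\AA)\otimes\cS_{\ell_1^+\oplus\ell_1^-,Y}(\AA)$ and $G(X)(\AA)$ acts trivially on the second tensor factor. Writing $\Phi':=\omega_{\psi,\chi_1,\chi_2,X_1,Y}(\gamma_{v+e_1^-}m_1(t)k,1)\Phi$ and expanding it as a finite sum of pure tensors, one sees that the restriction of $\theta_{\psi,\chi_1,\chi_2,X_1,Y}(g,1,\Phi')$ to $g\in G(X)(\AA)$ becomes a finite sum $\sum_j c_j\,\theta_{\psi,\chi_1,\chi_2,X,Y}(g,1,\Phi_{1,j})$ with scalars $c_j\in\CC$ independent of $g$.

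Combining (i)--(iii), the innermost integral is a finite linear combination of period integrals of the form \eqref{eq:int-sigma-theta} on $J=J(Z',v)$, with $Z\subset Z'\subset X$ satisfying $\dim Z'=\dim Z+2$, and $L'=Ev$ an isotropic line in the orthogonal complement of $Z$ in $Z'$. The hypothesis of the proposition precisely prohibits $\sigma\otimes \overline{\Theta_{\psi,\chi_1,\chi_2,X,Y}}$ from being $J(Z',L')$-distinguished for all such pairs $(Z',L')$, so each term vanishes for every choice of data and $I_{\Omega_{1,1}}(\xi_{c,s})=0$. The case $\xi=\xi_s^c$ is handled identically, with $f_s$ replaced by $M(w_0,s)f_s$ and $\hat\tau_c$ by $\hat\tau^c$.

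The main obstacle is step \textbf{(iii)}: the element $\gamma_{v+e_1^-}$ nontrivially mixes the $X$- and $\ell_1^\pm$-components of $X_1$, so before restricting to $G(X)(\AA)$ one must apply $\omega_{\psi,\chi_1,\chi_2,X_1,Y}(\gamma_{v+e_1^-}m_1(t)k,1)$ to $\Phi$ and then verify the factorisation directly in the mixed model, taking care with the twists by $\chi_1$ and $\chi_2$. The formal manipulations above also assume absolute convergence of each integral, which is deferred to Section~\ref{sec:absol-conv-integr}.
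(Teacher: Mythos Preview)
Your proposal is correct and follows exactly the paper's approach: the paper's entire argument for this proposition is the single sentence ``by a similar reasoning as in the paragraph that contains \eqref{eq:restrict-xi-to-GX}, we see that the innermost integral is a period integral of $\sigma\otimes\overline{\Theta_{\psi,\chi_1,\chi_2,X,Y}}$ over $[J(Ev^+\oplus Z\oplus Ev,v)]$,'' and your steps (i)--(iii) simply spell out what that similar reasoning is.

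One small correction to your step (iii), since you flagged it as the main obstacle: a general Schwartz function $\Phi'\in\cS_{X_1,Y}(\AA)$ is \emph{not} a finite sum of pure tensors at the archimedean places, so that decomposition is not available. The cleaner (and equally elementary) mechanism is to observe directly that for $g\in G(X)(\AA)$ one has
\[
\theta_{\psi,\chi_1,\chi_2,X_1,Y}(g,1,\Phi')=\sum_{w_2\in (Y\otimes\ell_1^+)(F)}\theta_{\psi,\chi_1,\chi_2,X,Y}\bigl(g,1,\Phi'(\cdot,w_2)\bigr),
\]
since $G(X)$ acts trivially on the $\ell_1^\pm$-factor; each summand is a genuine element of $\Theta_{\psi,\chi_1,\chi_2,X,Y}$, and swapping the (convergent) sum with the period integral over $[J(Z',v)]$ gives the vanishing. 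This is presumably what the paper means by ``the theta function lies in $\overline{\Theta_{\psi,\chi_1,\chi_2,X,Y}}$'' in the $\Omega_{1,0}$ case as well.
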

  
\subsection{The value of $I_{\Omega_{2,2}} (\xi)$}
\label{sec:series-over-omega_22}

Finally consider the action of $G (Z_1)$ on $\Omega_{2,2}$. Recall that $\daleth$ is a traceless element  in $E^\times$ that we fixed in Sec.~\ref{sec:notation-preliminary}. Let $\alpha$ run through a set of representatives for $\daleth F^\times/\Nm_{E/F}E^\times$. For each $\alpha$, let $z_\alpha= e_1^- - \frac{1}{2}\alpha e_1^+$. Then $\form{z_\alpha}{z_\alpha}_{Z_1} = -\alpha$. Then a set of representatives for the $G (Z_1)$-orbits on $\Omega_{2,2}$ consists of $E (v+z_\alpha)$ where $v\in V$ with $\form{v}{v}_V=\alpha$ and $\alpha$ running through a set of representatives for $\daleth F^\times/\Nm_{E/F}E^\times$. We find the stabiliser of $E (v+z_\alpha)$. Suppose $\gamma\in G (Z_1)$ stabilises  $E (v+z_\alpha)$. This means $(v+z_\alpha)\gamma = c (v+z_\alpha)$ for some $c\in E^\times$. Thus $v=cv$ and $z_\alpha\gamma = cz_\alpha$. Thus $c=1$ and $\gamma$ fixes $z_\alpha$. In other words, $\gamma \in G (z_\alpha^{\perp} \cap Z_1)$. Let $z_\alpha' = e_1^- + \frac{1}{2}\alpha e_1^+\in Z_1$. It is orthogonal to $z_\alpha$. Then we can write $G (z_\alpha^{\perp} \cap Z_1)$ as $G (Z\oplus Ez_\alpha')$. Thus the integral we are concerned with is
\begin{align}
  I_{\Omega_{2,2}} (\xi) &= \sum_{x\in\Omega_{2,2}  / G (Z_1)} \int_{G (Z_1)^{\gamma_x} \lmod G (Z_1) (\A)}  \xi (\gamma_x g) dg\nonumber\\
                         &=\sum_{\alpha\in\daleth F^\times/\Nm_{E/F}E^\times} \sum_{v\in V, \form{v}{v}=\alpha} \int_{ G (Z\oplus Ez_\alpha') (F)\lmod G (Z_1) (\A)} \xi (\gamma_{v+z_\alpha} g) dg \nonumber\\
  &=\sum_{\alpha\in\daleth F^\times/\Nm_{E/F}E^\times} \sum_{v\in V, \form{v}{v}=\alpha}\int_{ G (Z\oplus Ez_\alpha') (\AA)\lmod G (Z_1) (\AA)} \int_{ [G (Z\oplus Ez_\alpha')]} \nonumber \\ \label{eq:I-Omega-22} & \qquad\xi (\gamma_{v+z_\alpha} hg) dh dg.
\end{align}
We choose $\gamma_{v+z_\alpha}$ determined by
\begin{align*}
e_1^- &\mapsto v+z_\alpha \\
 e_1^+ &\mapsto \frac{1}{2\alpha} ( v-z_\alpha) \\
v &\mapsto z_\alpha' := e_1^- + \half \alpha e_1^+
\end{align*}
with the identity action on the orthogonal complement.
 We note that $\form{z_\alpha'}{z_\alpha'}=\alpha$ and $\form{z_\alpha}{z_\alpha'}=0$. It is clear that $\gamma_{v+z_\alpha}G (Z\oplus Ez_\alpha') \gamma_{v+z_\alpha}^{-1} = G(Z\oplus Ev)$. Thus the inner integral is
\begin{align*}
  \int_{ [G (Z\oplus Ev)]} \xi (h \gamma_{v+z_\alpha}g) dh,
\end{align*}
which is a period integral of $\sigma\otimes\overline{\Theta_{\psi,\chi_1,\chi_2,X,Y}}$ over $[G (Z\oplus Ev)]$, by a similar reasoning as in the paragraph that contains \eqref{eq:restrict-xi-to-GX}. Thus we have:
\begin{prop}\label{prop:val-I-Omega22}
    If $\sigma\otimes \overline{\Theta_{\psi,\chi_1,\chi_2,X,Y}}$ is not $G (Z')$-distinguished for all non-degenerate skew-Hermitian subspace $Z'$ of $X$ such that $Z' \supset Z$ and $\dim Z' = \dim Z +1$, then $I_{\Omega_{2,2}} (\xi)$ vanishes for $\xi = \xi_{c,s}$ and $\xi_s^c$. 
\end{prop}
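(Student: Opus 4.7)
My plan is parallel to the proofs of Propositions~\ref{prop:val-I-Omega10} and \ref{prop:val-I-Omega11}. Starting from \eqref{eq:I-Omega-22}, I would first observe that for each $\alpha \in \daleth F^\times/\Nm_{E/F}E^\times$ and each $v \in V$ with $\form{v}{v}_V = \alpha$, the subspace $Z' := Z \oplus Ev$ is a non-degenerate skew-Hermitian subspace of $X$ containing $Z$ with $\dim Z' = \dim Z + 1$, so it falls precisely within the family controlled by the hypothesis. The conjugation identity $\gamma_{v+z_\alpha}\, G(Z\oplus Ez_\alpha')\, \gamma_{v+z_\alpha}^{-1} = G(Z \oplus Ev)$, already recorded in the text preceding the statement, is what lets me view the inner integral as an integral over $[G(Z')]$.

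Next, for fixed $g \in G(Z_1)(\AA)$, I would analyze the inner integrand $h \mapsto \xi(h \gamma_{v+z_\alpha} g)$ as a function on $G(Z')(\AA) \subset G(X)(\AA) \subset G(X_1)(\AA)$, where the two inclusions come respectively from $Z \oplus Ev \subset X$ and $X \subset X_1$. By exactly the reasoning surrounding \eqref{eq:restrict-xi-to-GX}, the restriction of $h \mapsto f_s(h \gamma_{v+z_\alpha} g)$ to $G(X)(\AA)$ lies in the space of $\sigma$ (for each fixed $\gamma_{v+z_\alpha} g$), while the restriction of $h \mapsto \overline{\theta_{\psi,\chi_1,\chi_2,X_1,Y}(h \gamma_{v+z_\alpha} g, 1, \Phi)}$ to $G(X)(\AA)$ lies in the span of $\overline{\Theta_{\psi,\chi_1,\chi_2,X,Y}}$. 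The latter uses the standard compatibility of Weil representations for the nested dual pairs $(G(X_1), G(Y)) \supset (G(X), G(Y))$, evaluated at the distinguished element $1 \in G(Y)$, exactly as was exploited for $I_{\Omega_{1,0}}(\xi)$ and $I_{\Omega_{1,1}}(\xi)$.

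Consequently, the inner integral
\begin{equation*}
  \int_{[G(Z \oplus Ev)]} \xi(h \gamma_{v+z_\alpha} g)\, dh
\end{equation*}
is a period integral for $\sigma \otimes \overline{\Theta_{\psi,\chi_1,\chi_2,X,Y}}$ over $[G(Z')]$ with $Z' = Z \oplus Ev$. The hypothesis that $\sigma \otimes \overline{\Theta_{\psi,\chi_1,\chi_2,X,Y}}$ is not $G(Z')$-distinguished for \emph{any} such $Z'$ then forces the inner integral to vanish for every choice of $\alpha$, $v$, $g$, and the Schwartz/section data. Feeding these vanishings back into \eqref{eq:I-Omega-22} yields $I_{\Omega_{2,2}}(\xi) = 0$ for $\xi = \xi_{c,s}$ and $\xi = \xi_s^c$.

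The one subtle point, as for the neighbouring propositions, is that the unfolding in \eqref{eq:I-Omega-22} and the subsequent interchange of the $h$- and $g$-integrals need to be justified by absolute convergence; this is not really part of the current argument but is the task of Sec.~\ref{sec:absol-conv-integr} (specifically, the $\Omega_{2,2}$ case, which by the author's own remarks is the genuinely new difficulty compared with the symplectic/metaplectic setting). Given that convergence, the proof itself is essentially bookkeeping: identify $\gamma_{v+z_\alpha}$, track the conjugation onto $G(Z')$, and invoke the non-distinction hypothesis.
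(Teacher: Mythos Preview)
Your proposal is correct and follows essentially the same approach as the paper: after the conjugation $\gamma_{v+z_\alpha} G(Z\oplus Ez_\alpha')\gamma_{v+z_\alpha}^{-1}=G(Z\oplus Ev)$, the inner integral in \eqref{eq:I-Omega-22} is recognised as a $G(Z')$-period of $\sigma\otimes\overline{\Theta_{\psi,\chi_1,\chi_2,X,Y}}$ with $Z'=Z\oplus Ev$, exactly as in the paragraph containing \eqref{eq:restrict-xi-to-GX}, and the non-distinction hypothesis kills every term. Your remark that absolute convergence is deferred to Sec.~\ref{sec:absol-conv-integr} (the $\Omega_{2,2}$ case there) also matches the paper's organisation.
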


\section{On Absolute Convergence of Integrals}
\label{sec:absol-conv-integr}

To justify the computation in Sec.~\ref{sec:comp-integrals} where we have freely changed the order of summation and integration, we must verify that the $I_{\Omega_{*,*}} (\xi)$'s \eqref{eq:I-omega} are all absolutely convergent. We recall  an estimate on the growth of theta series. The issue of the absolute convergence of 
$I_{\Omega_{*,*}} (\xi)$ with $(*,*)=(2,2)$  is a case that does not occur in the symplectic/metaplectic case of our previous work \cite{MR3805648,wu:_period-metaplectic}. Thus we have to use new methods to estimate the integrals.
\subsection{Heights}
\label{sec:heights}

For a finite dimensional vector space $V$ over $E$, we fix a basis once and for all. Thus we identify $V(\AA_E)$ with $\AA_E^n$ for some $n$. We define a height function on $\AA_E^n$ as in \cite[Sec.~1.1]{MR0191899}.  Let $(v_1,\ldots , v_n) \in \AA_E^n$. First we have the local heights. For a non-archimedean place $w$ of $E$, set
\begin{equation*}
  ||(v_{1,w},\ldots , v_{n,w})||_w = \max_i \{ |v_{i,w}|_w\}
\end{equation*}
and for a real place $w$, set
\begin{equation*}
  ||(v_{1,w},\ldots , v_{n,w})||_w = (\sum_i |v_{i,w}|_w^2)^{\half}
\end{equation*}
and for a complex place $w$, set
\begin{equation*}
  ||(v_{1,w},\ldots , v_{n,w})||_w = \sum_i |v_{i,w}|_w.
\end{equation*}
Note that the absolute value at a complex place is the square of the Euclidean norm. Then we define
\begin{equation*}
  ||(v_1,\ldots , v_n)|| = \prod_w ||(v_{1,w},\ldots , v_{n,w})||_w.
\end{equation*}

As in \cite[I.2.2]{MR1361168}, we have a height function $||\ ||$ on $G(X_1)(\AA)$, etc.

\subsection{Bound for theta series}
\label{sec:bound-theta-series}

 We need an estimate for the  theta series. For $h\in G (X) (\A)$, let $||h||$ denote the height of $h$ as defined in \cite [I.2.2] {MR1361168}.
\begin{lemma}\label{lemma:theta-bound} Fix $\Phi \in \cS_{X_1,Y}
(\A)$.  There exist $D>0$, $r>0$ and $R>0$ such that
  \begin{equation*} |\theta_{\psi,\chi_1,\chi_2,X_1,Y} (n m_1 (t)h k,1,\Phi)| < D
(|t|^{\frac{\dim Y}{2}}+ |t|^{-r}) || h ||^R
  \end{equation*} for all $n \in N_{1} (\A)$, $t\in \RGL_1 (\A)$, $h\in
  G (X) (\A)$ and $k\in K_{G (X_1)}$.
\end{lemma}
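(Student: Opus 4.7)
The plan is to reduce, via compactness arguments, to the case $n=k=1$, apply the explicit Weil-representation formula for $m_1(t)h$ in the mixed model, and split the resulting bound into two decoupled theta sums, each of which admits a standard estimate.

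First I would work in the mixed model associated to $X_1=\ell_1^+\oplus X\oplus\ell_1^-$, writing $\Phi(y,w)\in\cS(Y(\AA)\oplus V^+(\AA))$ with $V^+:=R_{E/F}(Y\otimes X)^+$. Since $\theta(\cdot,1,\Phi)$ is $N_1(F)$-invariant and $[N_1]$ is compact, I can replace $n\in N_1(\AA)$ by a representative from a fixed compact fundamental domain. Together with $k\in K_{G(X_1)}$, this places $nk$ in a compact set, so continuity of the Weil representation on Schwartz space gives, for any $N$, a uniform majorant
\begin{equation*}
  |\omega_{\psi,\chi_1,\chi_2}(nk,1)\Phi(y,w)|\leq C_N(1+\|y\|+\|w\|)^{-N}.
\end{equation*}
Applying the standard mixed-model formula $\omega_{\psi,\chi_1,\chi_2}(m_1(t)h,1)\Phi(y,w)=\chi_1(t)|t|^{\dim Y/2}\Phi(ty,w\cdot h)$ (with $|\chi_1(t)|=1$) together with the elementary inequality $(1+a+b)^{-N}\leq(1+a)^{-N/2}(1+b)^{-N/2}$ yields
\begin{equation*}
  |\theta(nm_1(t)hk,1,\Phi)|\leq C_N |t|^{\dim Y/2}\Bigl(\sum_{y\in Y}(1+\|ty\|)^{-N/2}\Bigr)\Bigl(\sum_{w\in V^+}(1+\|w\cdot h\|)^{-N/2}\Bigr).
\end{equation*}

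Next I would estimate the two theta sums separately. The $w$-sum is of moderate growth in $h$: via the embedding $G(X)(\AA)\hookrightarrow\GL(V^+)(\AA)$, the distortion of $\|\cdot\|$ under $h$ is bounded by a power of $\|h\|$, so for $N$ large enough Schwartz decay together with a zeta-function comparison give $\sum_{w\in V^+}(1+\|w\cdot h\|)^{-N/2}\leq D\|h\|^R$. For the $y$-sum, split by $|t|\gtrless 1$. When $|t|\geq 1$, the product formula forces $\|ty\|\geq|t|\geq 1$ for nonzero $y\in Y$, so Schwartz decay makes the tail summable and the sum is uniformly bounded. When $|t|<1$, Poisson summation on $Y(\AA)/Y$ converts $\sum_{y}\Phi(ty)$ into $|t|^{-\dim Y}\sum_{y^*}\hat{\Phi}(y^*/t)$, whose dual sum now lies in the regime $|1/t|>1$ and is uniformly bounded. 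Combining, $\sum_{y}(1+\|ty\|)^{-N/2}\leq C(1+|t|^{-r_0})$ for some $r_0>0$.

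Multiplying everything yields $|\theta|\leq D'(|t|^{\dim Y/2}+|t|^{\dim Y/2-r_0})\|h\|^R$, which has the form of the lemma with $r:=r_0-\dim Y/2$ when this is positive (otherwise the second term is dominated by the first and can be absorbed into $D$). The main obstacle is the Poisson step for small $|t|$: it requires careful bookkeeping of adelic Haar measures and Fourier normalisations on the $E$-vector space $Y$, so that the resulting exponent $r_0$ interfaces correctly with the $\chi_1(t)|t|^{\dim Y/2}$ Weil factor coming from $m_1(t)$. A subsidiary technical point is verifying the moderate-growth bound for the $w$-sum under the specific $G(X)(\AA)$-action on $V^+\subset R_{E/F}(Y\otimes X)$, which reduces to the classical $\GL$-theta moderate-growth estimate via the natural embedding.
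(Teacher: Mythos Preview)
Your overall strategy---work in the mixed model, use compactness for $K$, the explicit Levi formula for $m_1(t)h$, then decouple into a $y$-sum and a $w$-sum, with Poisson summation controlling the $y$-sum for small $|t|$---is the standard one and is exactly what the paper invokes by citing the symplectic case \cite[Lemma~5.3]{MR3805648}. The paper gives no further argument, so in that sense your sketch goes beyond what the paper supplies.

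There is, however, a genuine glitch in your reduction step. You write ``this places $nk$ in a compact set'' and then bound $|\omega(nk,1)\Phi(y,w)|$, but in the Iwasawa product $n\,m_1(t)\,h\,k$ the factors $n$ and $k$ are separated by $m_1(t)h$ and cannot be combined. The operator you must control is $\omega(n)\circ\omega(m_1(t)h)\circ\omega(k)$, and while $\omega(k)\Phi=\Phi_k$ sits in a bounded set of Schwartz space, the intermediate function $\omega(m_1(t)h)\Phi_k$ does \emph{not}: its Schwartz seminorms blow up with $t,h$. So applying a uniform operator bound for $\omega(n_0)$ (with $n_0$ in a compact fundamental domain) at this stage is not legitimate.

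The fix is to use the explicit action of $N_1$ in the mixed model. Since $N_1$ preserves the isotropic subspace $W^-=(Y\otimes X)^-\oplus R_{E/F}(Y\otimes\ell_1^-)$, one has $\omega(n(\mu,\beta))\Psi(y,w)=\psi(\cdots)\,\Psi(y,\,w+p_+(y\otimes x_\mu))$, i.e.\ a pure phase times a translation of the $w$-variable by an amount linear in $y$ and $\mu$. After reducing $n$ to a compact fundamental domain, $\mu$ is bounded, but the shift still depends on the summation variable $y$ and is adelic rather than rational, so it is not literally absorbed into the rational $w$-sum. What one needs is the elementary observation that $\sum_{w\in L(F)}(1+\|(w+p)h\|)^{-N}$ is bounded uniformly in $p\in L(\AA)$: translating $p$ by $L(F)$ permutes the sum, so one may take $p$ in a compact fundamental domain, and then continuity (together with the estimate $\|(w+p)h\|\gg\|h\|^{-a}\|w+p\|$) gives the bound $C\|h\|^{R}$ independent of $p$. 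With this in hand your decoupled $y$-sum and Poisson argument go through as written.
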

\begin{rmk}
  The proof for the symplectic case (\cite[Lemma~5.3]{MR3805648}) can be easily adapted to the unitary case.
\end{rmk}

\subsection{Absolute Convergence of $I_{\Omega_{0,1}}(\xi)$}
\label{sec:absol-conv-Omega01}

First set $\xi=\xi_{c,s}$. We need to show that \eqref{eq:int-omega01-1} is absolutely convergent. By Lemma.~\ref{lemma:theta-bound}, we just need to show that for $C \in \RR$  and $R>0$
\begin{align*}
  &\int_{K_{G(Z_1)}}  \int_{[G(Z)]} \int_{[\RGL_1]}\int_{[N(Z_1,e_1^-)]} |t|^{s+\rho_{Q_1}-2\rho_{Q(Z_1,e_1^-)}+C}||h||^R \\
    &|f_s (hk)|
  \hat {\tau}_c (H (m_1(t))) dn dt dh dk
\end{align*}
is  convergent.  The integration over $dn$ is over a compact set. This is also the case with the integration over $dk$. The integration over $dt$ is convergent when $\Re s$ is large enough. The integration over $dh$ is convergent since $f_s(\cdot k)$ is in $\sigma$ and is hence rapidly decreasing.

For $\xi=\xi_s^c$, the argument is analogous. We just need to show that
\begin{align*}
&  \int_{K_{G(Z_1)}}  \int_{[G(Z)]} \int_{[\RGL_1]}\int_{[N(Z_1,e_1^-)]} |t|^{-s+\rho_{Q_1}-2\rho_{Q(Z_1,e_1^-)}+C}||h||^R \\
&  |M(w_0,s)f_s (hk)| 
  \hat {\tau}^c (H (m_1(t))) dn dt dh dk
\end{align*}
is convergent. Note that  $C=\half \dim{Y}$ or $-r <0$ as in the exponents of $|t|$ in Lemma.~\ref{lemma:theta-bound}. In any case, as long as $\Re s > -s_0$, the integration over $dt$ is absolutely convergent. We need this finer statement when we consider period integrals of  residues of Eisenstein series.
Thus we find
\begin{lemma}\label{lemma:conv-01}\leavevmode
  \begin{enumerate}
  \item For $\Re s$ large enough,  both $I_{\Omega_{0,1}}(\xi_{c,s})$ and $I_{\Omega_{0,1}}(\xi_s^c)$ are absolutely convergent.
  \item For $\Re s_1 > -s_0$, $I_{\Omega_{0,1}}(\Res_{s=s_1}\xi_s^c)$ is absolutely convergent.
\end{enumerate}
\end{lemma}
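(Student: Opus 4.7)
The plan is to pull the absolute values inside the formal Iwasawa decomposition used in Sec.~\ref{sec:series-over-omega_01} and bound the integrand piecewise using Lemma~\ref{lemma:theta-bound} together with the rapid decrease of the cuspidal section $f_s(\cdot k)$ (respectively of $M(w_0,s)f_s(\cdot k)$). For $\xi=\xi_{c,s}$, writing $g=nm_1(t)hk$ with $n\in N(Z_1,e_1^-)(\AA)$, $t\in\RGL_1(\AA)$, $h\in G(Z)(\AA)$, $k\in K_{G(Z_1)}$, the bound from Lemma~\ref{lemma:theta-bound} replaces $|\theta_{\psi,\chi_1,\chi_2,X_1,Y}(nm_1(t)hk,1,\Phi)|$ by $D(|t|^{\dim Y/2}+|t|^{-r})\,\|h\|^R$, and the definition of $f_s$ gives $|f_s(nm_1(t)hk)|=|t|^{\Re s+\rho_{Q_1}}|f_s(hk)|$. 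I then integrate the resulting product against $|t|^{-2\rho_{Q(Z_1,e_1^-)}}\hat{\tau}_c(H(m_1(t)))$ over the same domain.

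The $dn$-integration is over the compact quotient $[N(Z_1,e_1^-)]$, and the $dk$-integration is over the compact $K_{G(Z_1)}$, so both contribute only a constant. The $dh$-integration of $\|h\|^R|f_s(hk)|$ converges because $f_s(\cdot\,k)$ restricted to $G(Z)(\AA)$ lies in $\sigma$ (realised as an automorphic form on $G(X)$, whose restriction to $G(Z)$ is still of rapid decay on $[G(Z)]$ thanks to cuspidality). The $dt$-integration reduces to the explicit one-variable integral $\int_0^c t^{\Re s-s_0}\,d^\times t$ (up to a constant and a second term of lower order coming from $|t|^{-r}$), which is finite once $\Re s$ is sufficiently large; this is exactly the computation producing \eqref{eq:int-omega01-over-t}.

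For $\xi=\xi_s^c$ the argument is strictly parallel, with $\hat{\tau}^c$ and $M(w_0,s)f_s$ in place of $\hat{\tau}_c$ and $f_s$. The only change is that the $dt$-integral now becomes $\int_c^\infty t^{-\Re s-s_0}\,d^\times t$ (plus the same lower order contribution from $|t|^{-r}$), which is finite precisely when $\Re s>-s_0$; this gives the sharper half-plane of absolute convergence needed later. Note that the intertwining operator $M(w_0,s)f_s$ is defined on a half-plane by an absolutely convergent integral and has meromorphic continuation, and on any fixed vertical strip away from its poles the $dh$-integration is handled identically.

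For part (2), the residue $\Res_{s=s_1}\xi_s^c$ is, after Iwasawa decomposition and the Fourier–Jacobi expansion of the theta kernel performed in Sec.~\ref{sec:series-over-omega_01}, the analogous integrand with the $t$-factor replaced by $\Res_{s=s_1}\frac{c^{-s-s_0}}{s+s_0}$ times $\Res_{s=s_1}M(w_0,s)f_s(hk)$, the latter being again a (meromorphic continuation of a) rapidly decreasing function in $h\in[G(Z)]$. The remaining $dn$, $dk$, $dh$ integrations are controlled exactly as before; the only subtlety is that the explicit $t$-integral produced by the residue is a genuine function of $s_1$ (not a further integral), so absolute convergence is reduced to convergence of the residue expression \eqref{eq:int-omega01-intertwine-part-over-t} evaluated at $s=s_1$, valid for $\Re s_1>-s_0$. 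The one step that requires care, and which I expect to be the only nontrivial point, is justifying that one may bound $|\Res_{s=s_1}M(w_0,s)f_s(hk)|$ uniformly in $h$ and $k$ by a rapidly decreasing function of $h$; this follows from the fact that the residue representation is square-integrable automorphic (by \cite[IV.1]{MR1361168}), so its elements are of uniform moderate growth, combined with the cuspidal support which ensures the standard rapid decay estimates on $[G(Z)]$ used above.
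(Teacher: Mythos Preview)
Your argument for part (1) is correct and essentially identical to the paper's: bound the theta series via Lemma~\ref{lemma:theta-bound}, note that the $dn$ and $dk$ integrals are compact, that the $dh$ integral converges by rapid decay of the cusp form $f_s(\cdot\,k)$ (resp.\ $M(w_0,s)f_s(\cdot\,k)$), and that the $dt$ integral converges for $\Re s$ large (resp.\ for $\Re s>-s_0$, the finer statement).

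Your treatment of part (2), however, is confused. You write that ``the residue $\Res_{s=s_1}\xi_s^c$ is \ldots\ the analogous integrand with the $t$-factor replaced by $\Res_{s=s_1}\frac{c^{-s-s_0}}{s+s_0}$''. This conflates the integrand $\xi_s^c(g)$ with its integral $I_{\Omega_{0,1}}(\xi_s^c)$: the expression $\frac{c^{-s-s_0}}{s+s_0}$ arises only \emph{after} performing the $dt$-integration, whereas $\Res_{s=s_1}\xi_s^c$ is defined pointwise in $g$ before any integration. Moreover, $\Res_{s=s_1}\frac{c^{-s-s_0}}{s+s_0}=0$ for $s_1\neq -s_0$, so the formula you write is nonsensical in the range $\Re s_1>-s_0$. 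You also invoke square-integrability of the residue representation and cuspidal support, which is unnecessary machinery here.

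The paper's argument is simpler and is already contained in your part (1): observe that $\Res_{s=s_1}\xi_s^c(g)=\overline{\theta(g)}\cdot\bigl(\Res_{s=s_1}M(w_0,s)f_s\bigr)(g)\cdot\hat\tau^c(H(g))$, where $\Res_{s=s_1}M(w_0,s)f_s$ is a section in $\cA_1(-s_1,\chi',\sigma)$ whose restriction $h\mapsto(\Res_{s=s_1}M(w_0,s)f_s)(hk)$ lies in $\sigma$ and is therefore rapidly decreasing on $[G(Z)]$. The bound you established for $\xi_s^c$ then applies verbatim with $s$ replaced by $s_1$, and the $dt$-integral $\int_c^\infty t^{-\Re s_1-s_0}\,d^\times t$ (together with the lower-order term) converges precisely for $\Re s_1>-s_0$. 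No appeal to the computed value \eqref{eq:int-omega01-intertwine-part-over-t} is needed; indeed, using it would be circular since that computation presupposes absolute convergence.
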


\subsection{Absolute Convergence of $I_{\Omega_{1,0}}(\xi)$}
\label{sec:absol-conv-Omega10}

We want to show absolute convergence of \eqref{eq:I_omega10}. First we choose $\gamma_v$ in a more uniform way than in Sec.~\ref{sec:series-over-omega_10}. Fix an isotropic vector $v_0\in V$. The isotropic lines in $V$ can be parametrised by $Q(V,v_0) \lmod G(V)$. Thus we may choose $\gamma_v=\gamma_{v_0}\delta$ for $\delta$ running over a set of representatives of $Q(V,v_0) \lmod G(V)$. After rewriting \eqref{eq:I_omega10}, we see that we   need to show the absolute convergence of
\begin{align*}
  &   \sum_{\delta\in Q(V,v_0)\lmod G(V)} \int_{[G (Z_1)]}  \xi (\gamma_{v_0}\delta g) dg \\
  =  &  \sum_{\delta\in Q(V,v_0)\lmod G(V)} \int_{[G (Z_1)]}  \xi (\gamma_{v_0}g \delta) dg \\
  =  &  \sum_{\delta\in Q(V,v_0)\lmod G(V)} \int_{[G (Ev_0^+ \oplus Z \oplus Ev_0)]}  \xi (g \gamma_{v_0} \delta) dg .
\end{align*}

Let $\xi=\xi_{c,s}$. We have
\begin{align*}
  \sum_{\delta\in Q(V,v_0)\lmod G(V)} \int_{[G (Ev_0^+ \oplus Z \oplus Ev_0)]}
\overline{\theta_{\psi,\chi_1,\chi_2,X_1,Y} (g\gamma_{v_0} \delta, 1, \Phi)} f_s (g\gamma_{v_0} \delta)\hat {\tau}_c (H (\gamma_{v_0} \delta)) dg .
\end{align*}

Write $\gamma_{v_0} \delta = n_{\gamma_{v_0} \delta}m_1(t_{\gamma_{v_0} \delta}) h_{\gamma_{v_0} \delta}k_{\gamma_{v_0} \delta}$ according to the Iwasawa decomposition. Then using the bound for theta series (Lemma~\ref{lemma:theta-bound}), we are reduced to showing the absolute convergence of 
\begin{align*}
  &\sum_{\delta\in Q(V,v_0)\lmod G(V)} \int_{[G (Ev_0^+ \oplus Z \oplus Ev_0)]} |t_{\gamma_{v_0} \delta}|^{s+C}
  ||gh_{\gamma_{v_0} \delta}||^R \\
  &|f_s(gh_{\gamma_{v_0} \delta}k_{\gamma_{v_0} \delta})|\hat {\tau}^c (H (m_1(t_{\gamma_{v_0} \delta}))) dg.
\end{align*}
In fact, we will show the stronger result that
\begin{align*}
  &\sum_{\delta\in Q(V,v_0)\lmod G(V)} \int_{[G (Ev_0^+ \oplus Z \oplus Ev_0)]} |t_{\gamma_{v_0} \delta}|^{s+C}
  ||gh_{\gamma_{v_0} \delta}||^R 
|f_s(gh_{\gamma_{v_0} \delta}k_{\gamma_{v_0} \delta})| dg
\end{align*}
is absolutely convergent. 
We  expand the domain of integration to $[G(X)]$ and then we can absorb $h_{\gamma_{v_0} \delta}$ into $g$. 
 Then due to the rapid decay of $f_s$ on a Siegel domain of $G(X)$,
\begin{equation*}
  \int_{[G (X)]} ||g||^R |f_s(gk)| dg.
\end{equation*}
is convergent and the convergence is uniform in $k\in K_{G(X_1)}$. Thus it remains to show  the absolute convergence of 
\begin{align*}
  \sum_{\delta\in Q(V,v_0)\lmod G(V)} |t_{\gamma_{v_0} \delta}|^{s+C}
\end{align*}
but this is majorised by an Eisenstein series. Hence it is absolutely convergent for $\Re s$ large enough.

Let $\xi=\xi_s^c$. A similar analysis reduces us to showing the absolute convergence of
\begin{align*}
  \sum_{\delta\in Q(V,v_0)\lmod G(V)} |t_{\gamma_{v_0} \delta}|^{-s+C} \hat {\tau}^c (H (m_1(t_{\gamma_{v_0} \delta}))).
\end{align*}
This is a finite sum and thus absolutely convergent. Furthermore, $|t_{\gamma_{v_0} \delta}|$ is comparable to $||e_1^-\gamma_{v_0} \delta||^{-1} = ||v_0\delta||^{-1}\le 1$. Thus if $c$ is large enough, no term survives truncation.

To conclude, we have 
\begin{lemma}\label{lemma:conv-10}\leavevmode
  \begin{enumerate}
  \item For $\Re s$ large enough, $I_{\Omega_{1,0}}(\xi_{c,s})$ is absolutely convergent.
  \item For $c$ large enough and  $s$  not a pole of $M(w_0,s)$, $I_{\Omega_{1,0}}(\xi_s^c)$ is absolutely convergent and is equal to $0$.
  \end{enumerate}
\end{lemma}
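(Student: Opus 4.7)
The plan is to handle parts (1) and (2) in parallel, by a two-step reduction: first use Lemma~\ref{lemma:theta-bound} to strip away the theta factor, then exhibit the remaining sum either as a majorant Eisenstein series on $G(V)$ (for part (1)) or as a finite sum that is killed by truncation (for part (2)). The preparation in Sec.~\ref{sec:series-over-omega_10} has already identified the integral with a sum over $\delta\in Q(V,v_0)\lmod G(V)$, after choosing a fixed isotropic vector $v_0\in V$ and setting $\gamma_v=\gamma_{v_0}\delta$; I would start from that point.

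For part (1), I would decompose $\gamma_{v_0}\delta = n_{\gamma_{v_0}\delta}\,m_1(t_{\gamma_{v_0}\delta})\,h_{\gamma_{v_0}\delta}\,k_{\gamma_{v_0}\delta}$ via Iwasawa on $G(X_1)(\AA)$. Lemma~\ref{lemma:theta-bound} then majorises the theta factor by a constant multiple of $|t_{\gamma_{v_0}\delta}|^{C}\,||gh_{\gamma_{v_0}\delta}||^{R}$ for $C=\tfrac{1}{2}\dim Y$ or $C=-r$. Enlarging the integration from $[G(Ev_0^+\oplus Z\oplus Ev_0)]$ to $[G(X)]$ (which only increases the absolute value) and absorbing $h_{\gamma_{v_0}\delta}$ into the variable, I reduce to showing
\begin{equation*}
  \int_{[G(X)]}||g||^{R}\,|f_s(gk)|\,dg < \infty
\end{equation*}
uniformly for $k\in K_{G(X_1)}$, which follows from the rapid decay of cuspidal sections on a Siegel domain of $G(X)$. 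What remains is the absolute convergence of
\begin{equation*}
  \sum_{\delta\in Q(V,v_0)\lmod G(V)} |t_{\gamma_{v_0}\delta}|^{\Re s + C},
\end{equation*}
which is majorised by an Eisenstein series on $G(V)$ attached to the maximal parabolic $Q(V,v_0)$ with inducing datum a power of the modulus character. Standard Langlands theory then gives absolute convergence for $\Re s$ sufficiently large.

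For part (2), the same reduction applies with $f_s$ replaced by $M(w_0,s)f_s$, but now the truncation factor $\hat\tau^{c}(H(m_1(t_{\gamma_{v_0}\delta})))$ intervenes. The key observation is that $|t_{\gamma_{v_0}\delta}|$ is comparable to $||e_1^-\gamma_{v_0}\delta||^{-1}=||v_0\delta||^{-1}$; since $v_0\delta$ is a nonzero $F$-rational vector, $||v_0\delta||\ge 1$ by the product formula, so $|t_{\gamma_{v_0}\delta}|\le 1$. Choosing $c$ large enough (any $c>1$) therefore forces $\hat\tau^{c}(H(m_1(t_{\gamma_{v_0}\delta})))=0$ for every $\delta$, so each term vanishes and both the absolute convergence and the value $0$ follow.

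The main obstacle will be the domain-enlargement step in part (1): one must check that the rapid decay of $f_s(\cdot\,k)$ on a Siegel domain of $G(X)$ dominates the polynomial factor $||g||^{R}$ produced by Lemma~\ref{lemma:theta-bound}, uniformly in $k\in K_{G(X_1)}$, and that this uniformity can be combined with the Eisenstein-series majorant without losing the absolute convergence range. Once this is in place, the identification with an Eisenstein series on $G(V)$ and the truncation argument for part (2) are routine.
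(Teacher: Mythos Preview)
Your proposal is correct and follows essentially the same approach as the paper: the same parametrisation by $\delta\in Q(V,v_0)\lmod G(V)$, the same Iwasawa decomposition of $\gamma_{v_0}\delta$, the same use of Lemma~\ref{lemma:theta-bound} followed by enlargement of the domain to $[G(X)]$ and absorption of $h_{\gamma_{v_0}\delta}$, and the same majorisation of the remaining $\delta$-sum by an Eisenstein series (part (1)) or elimination by truncation via $||v_0\delta||\ge 1$ (part (2)). One small caveat: the parenthetical ``any $c>1$'' in part (2) is slightly too optimistic, since the relation between $|t_{\gamma_{v_0}\delta}|$ and $||v_0\delta||^{-1}$ is only up to a fixed constant coming from the Iwasawa decomposition, so $c$ must exceed that constant rather than $1$; the paper accordingly just says ``$c$ large enough''.
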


\subsection{Absolute Convergence of $I_{\Omega_{1,1}}(\xi)$}
\label{sec:absol-conv-i_omega11}

We need to show absolute convergence of \eqref{eq:I_omege11}. First we choose $\gamma_v$ in a more uniform way than in Sec.~\ref{sec:series-over-omega_11}. Fix an isotropic vector $v_0\in V$. The isotropic lines in $V$ can be parametrised by $Q(V,v_0) \lmod G(V)$. Thus we may choose $\gamma_{v+e_1^-}=\gamma_{v_0+e_1^-}\delta$ for $\delta$ running over a set of representatives of $Q(V,v_0) \lmod G(V)$. Thus we arrive at
\begin{align*}
  &  \sum_{\delta\in Q(V,v_0)\lmod G(V)} \int_{K_{G(Z_1)}} \int_{\RGL_1(\AA)}  \int_{[J(Z_1,e_1^-)]}    \xi (\gamma_{v_0+e_1^-}\delta g m_1(t)k) \\
  &|t|^{-2\rho_{Q(Z_1,e_1^-)}}dg dt dk \\
  =&    \sum_{\delta\in Q(V,v_0)\lmod G(V)} \int_{K_{G(Z_1)}} \int_{\RGL_1(\AA)}  \int_{[J(Ev_0^+\oplus Z \oplus Ev_0, v_0)]}    \xi ( g\gamma_{v_0+e_1^-} \delta m_1(t)k) \\
  &|t|^{-2\rho_{Q(Z_1,e_1^-)}} dg dt dk .
\end{align*}
We use the Iwasawa decomposition
\begin{equation*}
  \gamma_{v_0+e_1^-} \delta m_1(t) = m_1(t_*) h_* n_* k_*
\end{equation*}
where $*$ denotes $\gamma_{v_0+e_1^-} \delta m_1(t)$ as this is quite lengthy.

Let $\xi=\xi_{c,s}$. Then by the bound on theta series (Lemma~\ref{lemma:theta-bound}), it reduces to showing the absolute convergence of
\begin{align*}
  &\sum_{\delta\in Q(V,v_0)\lmod G(V)} \int_{K_{G(Z_1)}} \int_{\RGL_1(\AA)}  \int_{[J(Ev_0^+\oplus Z \oplus Ev_0, v_0)]}
  |t_*|^{s+C} ||gh_*||^R\\
  &|f_s(gh_*k_*k)| |t|^{-2\rho_{Q(Z_1,e_1^-)}} \hat{\tau}_c(H(m_1(t_*))) dg dt dk.
\end{align*}
After enlarging the domain of integration of $dg$ from $[J(Ev_0^+\oplus Z \oplus Ev_0, v_0)]$ to $[G(X)]$, we may absorb $h_*$ into $g$ and we have the absolute convergence (uniform in $k\in K_{G(X_1)}$) of
\begin{equation*}
  \int_{[G(X)]} ||g||^R |f_s(g k)| dg
\end{equation*}
due to the rapid decay of $f_s(\cdot k)$ on $[G(X)]$. Thus it suffices to show the absolute convergence of
\begin{equation*}
  \sum_{\delta\in Q(V,v_0)\lmod G(V)} \int_{\RGL_1(\AA)} |t_*|^{s+C} |t|^{-C_2} \hat{\tau}_c(H(m_1(t_*))) dt
\end{equation*}
for fixed constants $C\in\RR$ and $C_2>1$.
Note that $t_*$ depends on $\delta$ and $t$. The quantity $|t_*|$ is comparable to $||e_1^- \gamma_{v_0+e_1^-} \delta m_1(t)||^{-1} = ||v_0\delta + e_1^- \bar{t}^{-1}||^{-1}$. Thus we now try to control
\begin{equation}\label{eq:omega11-sum-delta-int-RGL1}
  \sum_{\delta\in Q(V,v_0)\lmod G(V)} \int_{\RGL_1(\AA)} ||v_0\delta + e_1^- \bar{t}^{-1}||^{-(s+C)} |t|^{-C_2} dt.
\end{equation}
The  integral in \eqref{eq:omega11-sum-delta-int-RGL1} can be decomposed into a product over all places $w$ of $E$ of the local integrals
\begin{equation*}
  \int_{E_w^\times} ||v_0\delta + e_1^- t_w^{-1}||_w^{-(s+C)}  |t_w|_w^{-C_2}  d^\times t_w.
\end{equation*}
Here we used the fact that if $w$ and $\bar{w}$ are two places of $E$ lying over a split place $v$ of $F$, then $|t_w|_{\bar{w}} = |t_w|_w$ where we view $t_w$ as an element of $F_v\isom E_w \isom E_{\bar{w}}$ and if $w$ is a place of $E$ lying over a non-split place $v$ of $F$, then $|\overline{t_w}|_w = |t_w|_w$ where bar denotes the non-trivial Galois action of $E_w/F_v$. Let $A_w=||v_0\delta||_w$. Then if $w$ is non-archimedean, we get
\begin{align*}
  &  \int_{E_w^\times} \max\{A_w, |t_w|_w^{-1} \}^{-(s+C)}  |t_w|_w^{-C_2}  d^\times t_w\\
  =& A_w^{-(s+C)}\int_{E_w^\times} \max\{1, A_w^{-1}|t_w|_w^{-1} \}^{-(s+C)}  |t_w|_w^{-C_2}  d^\times t_w\\
    =& A_w^{-(s+C-C_2)}\int_{E_w^\times} \max\{1, |t_w|_w^{-1} \}^{-(s+C)}  |t_w|_w^{-C_2}  d^\times t_w.
\end{align*}
We have changed variable at the last step.
We compute the integral
\begin{align*}
  &  \int_{E_w^\times} \max\{1, |t_w|_w^{-1} \}^{-(s+C)}  |t_w|_w^{-C_2}  d^\times t_w 
  = \sum_{n=-\infty}^{-1} q_w^{nC_2} + \sum_{n=0}^{+\infty} q_w^{-n(s+C-C_2)}\\
  =& \frac{q_w^{-C_2}}{1- q_w^{-C_2}} + \frac{1}{1-q_w^{-(s+C-C_2)}}
  =  \frac{1-q_w^{-(s+C)}}{(1-q_w^{-C_2})(1-q_w^{-(s+C-C_2)})}
\end{align*}
where $q_w$ is the cardinality of the residue field.
If $w$ is real, the local integral is
\begin{align*}
  &  \int_{E_w^\times} (A_w^2 + |t_w|_w^{-2})^{-\half(s+C)} |t_w|_w^{-C_2} d^\times t_w \\
  =& A_w^{-(s+C-C_2)} \int_{E_w^\times} (1 + |t_w|_w^{-2})^{-\half(s+C)} |t_w|_w^{-C_2} d^\times t_w .
\end{align*}
If $w$ is complex, the local integral is
\begin{align*}
  &  \int_{E_w^\times} (A_w + |t_w|_w^{-1})^{-(s+C)} |t_w|_w^{-C_2} d^\times t_w \\
  =& A_w^{-(s+C-C_2)} \int_{E_w^\times} (1 + |t_w|_w^{-1})^{-(s+C)} |t_w|_w^{-C_2} d^\times t_w .
\end{align*}
In both cases, when $\Re s$ is large enough, the integrals are absolutely convergent.  Combining these we find that \eqref{eq:omega11-sum-delta-int-RGL1} is equal to the product of  absolutely convergent integrals at the archimedean places and 
\begin{align*}
  \sum_{\delta\in Q(V,v_0)\lmod G(V)} ||v_0\delta||^{-(s+C-C_2)} \frac{L_E^{S_\infty}(C_2)L_E^{S_\infty}(s+C-C_2)}{L_E^{S_\infty}(s+C)}.
\end{align*}
Here the $L$-functions are partial Dedekind $L$-functions away from the  archimedean places.
As long as $\Re s$ is large enough, the part consisting of the partial $L$-functions is absolutely convergent. We note that $C_2>1$. The series over $\delta$ is bounded by an Eisenstein series and thus, when $\Re s$ is large enough, is absolutely convergent. 

Let $\xi=\xi_s^c$. A similar analysis shows that it suffices to prove the absolute convergence of
\begin{align*}
  \sum_{\delta\in Q(V,v_0)\lmod G(V)} \int_{\RGL_1(\AA)} |t_*|^{-s+C} |t|^{-C_2} \hat{\tau}^c(H(m_1(t_*))) dt.
\end{align*}
Noting that $|t_*|^{-1}$ is comparable to 
\begin{equation*}
  ||v_0\delta + e_1^- \bar{t}^{-1}|| \ge ||v_0\delta|| \ge 1
\end{equation*}
and only those $t_*$ with $|t_*|< c^{-1}$ can survive the truncation,
we see that if $c$ is chosen large enough, then nothing remains.

In conclusion, we have
\begin{lemma}\label{lemma:conv-11}\leavevmode
  \begin{enumerate}
  \item For $\Re s$ large enough, $I_{\Omega_{1,1}}(\xi_{c,s})$ is absolutely convergent.
  \item For $c$ large enough and $s$ not a pole of $M(w_0,s)$, $I_{\Omega_{1,1}}(\xi_s^c)$ is absolutely convergent and is equal to $0$.
  \end{enumerate}
\end{lemma}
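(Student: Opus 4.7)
The plan is to parametrize the orbits uniformly so that the sum defining $I_{\Omega_{1,1}}$ becomes a series over $Q(V,v_0)\lmod G(V)$ which can be majorized by an Eisenstein series on $G(V)$, mirroring the strategy used for $I_{\Omega_{0,1}}$ and $I_{\Omega_{1,0}}$. First I would fix an isotropic vector $v_0\in V$ and write each orbit representative $\gamma_{v+e_1^-}$ as $\gamma_{v_0+e_1^-}\delta$ with $\delta$ running over $Q(V,v_0)\lmod G(V)$. Since $\gamma_{v_0+e_1^-}$ conjugates $J(Z_1,e_1^-)$ to $J(Ev_0^+\oplus Z\oplus Ev_0,v_0)$, I can rewrite \eqref{eq:I_omege11} as a sum over $\delta$ of integrals on the fixed group $[J(Ev_0^+\oplus Z\oplus Ev_0,v_0)]$ against $\xi(g\gamma_{v_0+e_1^-}\delta m_1(t)k)$, bringing the problem into a form where Iwasawa decomposition and the theta bound can be applied uniformly.

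Next I would Iwasawa-decompose $\gamma_{v_0+e_1^-}\delta m_1(t)=m_1(t_*)h_*n_*k_*$ and apply Lemma~\ref{lemma:theta-bound} to bound the theta factor by a majorant of the form $|t_*|^{\pm s+C}||gh_*||^R|f_s(gh_*k_*k)||t|^{-C_2}$. For $\xi=\xi_{c,s}$, after enlarging the $g$-integration from $[J(Ev_0^+\oplus Z\oplus Ev_0,v_0)]$ to $[G(X)]$ and absorbing $h_*$ into $g$, the rapid decay of $f_s(\cdot k)$ (uniformly in $k\in K_{G(X_1)}$) handles the inner integral, so it remains to prove convergence of
\begin{equation*}
\sum_{\delta\in Q(V,v_0)\lmod G(V)}\int_{\RGL_1(\AA)}|t_*|^{s+C}|t|^{-C_2}\,dt.
\end{equation*}
Using the height comparability $|t_*|^{-1}\sim ||v_0\delta+e_1^-\bar t^{-1}||$, I would factor this integral across all places of $E$. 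At each non-archimedean $w$ a direct geometric-series computation yields a local factor equal to $||v_0\delta||_w^{-(s+C-C_2)}$ times a ratio of local Euler factors of the Dedekind zeta function of $E$; the archimedean factors are controlled by elementary estimates. Globally this produces a convergent ratio of partial Dedekind $L$-functions multiplied by $\sum_\delta ||v_0\delta||^{-(s+C-C_2)}$, and the latter series is dominated by a convergent Eisenstein series on $G(V)$ for $\Re s$ large.

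For $\xi=\xi_s^c$, the same reduction produces a series/integral carrying the truncation $\hat{\tau}^c(H(m_1(t_*)))$. The key observation is $|t_*|^{-1}\sim ||v_0\delta+e_1^-\bar t^{-1}||\ge ||v_0\delta||\ge 1$, so $|t_*|\le 1$; since $\hat{\tau}^c$ is supported where $|t_*|>c$, choosing $c>1$ kills every term and yields both absolute convergence and vanishing. I expect the main obstacle to be the bookkeeping around the local integrals and their packaging into a single Dedekind $L$-function ratio with the correct region of absolute convergence: one must track the conventions $|t_w|_{\bar w}=|t_w|_w$ at split places and $|\bar t_w|_w=|t_w|_w$ at inert/ramified places, and choose $C_2>1$ so that $\Re s+C-C_2$ lies in the region of absolute convergence of $L_E^{S_\infty}$ while still producing an Eisenstein majorant that converges.
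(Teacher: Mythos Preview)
Your proposal is correct and follows essentially the same route as the paper: uniform parametrisation via $\gamma_{v_0+e_1^-}\delta$, Iwasawa decomposition of $\gamma_{v_0+e_1^-}\delta m_1(t)$, application of the theta bound and enlargement of the $g$-domain to $[G(X)]$, reduction to the sum--integral $\sum_\delta\int |t_*|^{s+C}|t|^{-C_2}\,dt$, factorisation via $|t_*|^{-1}\sim\|v_0\delta+e_1^-\bar t^{-1}\|$ into local pieces assembling into a Dedekind $L$-function ratio times an Eisenstein-type majorant, and for $\xi_s^c$ the height lower bound $\|v_0\delta+e_1^-\bar t^{-1}\|\ge\|v_0\delta\|\ge 1$ forcing all terms to die under $\hat\tau^c$ for $c$ large. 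The only cosmetic point is that comparability (not equality) of $|t_*|^{-1}$ with the height means you need $c$ exceeding the implicit constant rather than merely $c>1$, which is exactly what the lemma statement allows.
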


\subsection{Absolute Convergence of $I_{\Omega_{2,2}}(\xi)$}
\label{sec:absol-conv-i_omega22}

We need to show the absolute convergence of \eqref{eq:I-Omega-22}.
The Bruhat decomposition of $\gamma_{v+z_\alpha}$ is given as follows
\begin{align}\label{eq:omega22-gamma-bruhat}
  \begin{pmatrix}
    1 & \frac{1}{\alpha} & -\frac{1}{2\alpha} \\
    0 & 1 & -1 \\
    0 & 0 & 1
  \end{pmatrix}
            \begin{pmatrix}
              &&1 \\ &1& \\ -1&&
            \end{pmatrix}
                                 \begin{pmatrix}
                                   \frac{\alpha}{2} & 0 & 0 \\
                                   0 & 1 & 0 \\
                                   0 & 0 & -\frac{2}{\alpha}
                                 \end{pmatrix}
                                           \begin{pmatrix}
                                             1 & -\frac{2}{\alpha} & -\frac{2}{\alpha} \\
                                             0 & 1 & 2 \\
                                             0 & 0 & 1
                                           \end{pmatrix}
\end{align}
with respect to the basis $e_1^+, v, e_1^-$. The four matrices will be denoted by $\gamma_{v+z_\alpha}^{(1)}$, $w_0$, $\gamma_{v+z_\alpha}^{(2)}$,  $\gamma_{v+z_\alpha}^{(3)}$ respectively and if there is no confusion, the subscript $v+z_\alpha$ will be omitted.

Assume that $\xi=\xi_{c,s}$. First we bound one term of the series \eqref{eq:I-Omega-22}, i.e.,
\begin{equation}\label{eq:omega22-one-term}
  \int_{ G (Z\oplus Ez_\alpha') (F)\lmod G (Z_1) (\A)} \xi (\gamma_{v+z_\alpha} g) dg.
\end{equation}
Formally \eqref{eq:omega22-one-term} is equal to
\begin{align*}
  &\int_{G (Z\oplus Ez_\alpha') (\AA)\lmod G (Z_1) (\A)} \int_{[G (Z\oplus Ez_\alpha')]}\overline{\theta_{\psi,\chi_1,\chi_2,X_1,Y} (\gamma_{v+z_\alpha} hg, 1, \Phi)} \\
  &\quad f_s (\gamma_{v+z_\alpha} hg)\hat {\tau}_c (H (\gamma_{v+z_\alpha} hg))  dh dg \\
  =&\int_{G (Z\oplus Ez_\alpha') (\AA)\lmod G (Z_1) (\A)} \int_{[G (Z\oplus Ev)]}  \overline{\theta_{\psi,\chi_1,\chi_2,X_1,Y} (h\gamma_{v+z_\alpha} g, 1, \Phi)} \\
  &\quad f_s (h\gamma_{v+z_\alpha} g)\hat {\tau}_c (H (\gamma_{v+z_\alpha} g))  dh dg  .
\end{align*}
  Applying the Bruhat decomposition \eqref{eq:omega22-gamma-bruhat} and noting that $h\gamma^{(1)}h^{-1}$ lies in $N_1 (\AA)$, we get
\begin{align*}
  \int_{G (Z\oplus Ez_\alpha') (\AA)\lmod G (Z_1) (\A)}\int_{[G (Z\oplus Ev)]}  \overline{\theta_{\psi,\chi_1,\chi_2,X_1,Y} (h\gamma^{(1)}h^{-1} h w_0\gamma^{(3)} g, 1, \Phi)}\\
    f_s (h w_0\gamma^{(3)} g ) \hat {\tau}_c (H ( w_0\gamma^{(3)} g))  dh dg  .
\end{align*}
As $Q (Z_1,Ee_1^-) (\AA) \cap G (Z\oplus Ez_\alpha') (\AA) = G (Z) (\AA)$,  it suffices to bound
\begin{align*}
  \int_{\RGL_1 (\AA)} \int_{N (Z_1,Ee_1^-) (\AA)} \int_{[G (Z\oplus Ev)]}  \overline{\theta_{\psi,\chi_1,\chi_2,X_1,Y} (h\gamma^{(1)}h^{-1} h w_0n\gamma^{(3)}  m_1 (t)  , 1, \Phi)}\\
  f_s (h w_0n\gamma^{(3)} m_1 (t)  ) \hat {\tau}_c (H ( w_0n\gamma^{(3)} m_1 (t) )) |t|^{-2\rho_{Q (Z_1)}}  dh dn dt  .
\end{align*}

Observe that the integration over $n$ is similar to the definition of the intertwining operator from $\cA _1 (s,\chi_1,\sigma)$ to $\cA _1 (-s,\chi_1,\sigma)$. It suffices to analyse
\begin{align*}
&  \sum_{\mu\in \RGL_1(F)}\int_{[\RGL_1]} \int_{[N (Z_1,Ee_1^-) ]} \int_{[G (Z\oplus Ev)]} \sum_{\delta\in N (Z_1,Ee_1^-) }\\
 & \quad \overline{\theta_{\psi,\chi_1,\chi_2,X_1,Y} (h\gamma^{(1)}h^{-1} h w_0\delta n\gamma^{(3)}  m_1 (\mu t)  , 1, \Phi)}\\
  &\quad f_s (h w_0\delta n\gamma^{(3)} m_1 (\mu t)  ) \hat {\tau}_c (H ( w_0\delta n\gamma^{(3)} m_1 (\mu t) )) |t|^{-2\rho_{Q (Z_1)}}  dh dn dt  .
\end{align*}
Now the integration over $n$ is a compact integral.

We bring back the whole series in \eqref{eq:I-Omega-22} and we need  to determine the absolute convergence of
\begin{align}\label{eq:four-series}
  &\sum_{\alpha\in\daleth F^\times/\Nm_{E/F}E^\times} \sum_{v\in V, \form{v}{v}=\alpha} \sum_{\delta\in N (Z_1,Ee_1^-) } \sum_{\mu\in \RGL_1(F)} \int_{[\RGL_1]} \nonumber\\
  &\quad  \int_{[N (Z_1,Ee_1^-) ]} \int_{[G (Z\oplus Ev)]} \overline{\theta_{\psi,\chi_1,\chi_2,X_1,Y} (h\gamma^{(1)}h^{-1} h w_0\delta \gamma^{(3)}m_1(\mu)  m_1 (t) n , 1, \Phi)} \nonumber\\
  &\quad f_s (h w_0\delta \gamma^{(3)}m_1(\mu) m_1 (t) n ) \hat {\tau}_c (H ( w_0\delta \gamma^{(3)}m_1(\mu) m_1 (t) n))  dh dn dt 
\end{align}
or, a fortiori,
\begin{align*}
  &\sum_{\gamma \in N (X_1,Ee_1^-) \setminus \{I\} } \int_{[\RGL_1]} \int_{[N (Z_1,Ee_1^-) ]} \int_{[G (Z\oplus Ev)]} \\
  &\overline{\theta_{\psi,\chi_1,\chi_2,X_1,Y} (h\gamma^{(1)}h^{-1} h w_0 \gamma  m_1 (t) n , 1, \Phi)} f_s (h w_0\gamma  m_1 (t) n )\\
 &   \hat {\tau}_c (H ( w_0 \gamma m_1 (t) n))  dh dn dt  .
\end{align*}
Observe that the four series in \eqref{eq:four-series} together parametrise a certain set of isotropic lines in $X$ and we have expanded this set.   

Define the function $\eta: G(X_1)(\AA) \rightarrow \RGL_1(\AA)/\RGL_1(\AA)^1$ such that if we write $g\in G(X_1)(\AA)$ according to the Iwasawa decomposition 
\begin{equation*}
  g= n m_1(t)h k
\end{equation*}
for $n\in N_1(\AA)$, $t\in \RGL_1(\AA)$, $h\in G(X)(\AA)$ and $k\in K_{G(X_1)}$, then $g$ is sent to the coset of $t$.

By Lemma~\ref{lemma:theta-bound}, the theta term has the bound which is a constant times
\begin{equation*}
  (|\eta(w_0\gamma  m_1 (t)) |^{\dim Y/2} +  |\eta(w_0\gamma  m_1 (t)) |^{-C_1} )||hh'||^{C_2}
\end{equation*}
for some constants $C_1,C_2>0$
where we write
\begin{equation}\label{eq:iwasawa-w0-gamma-m1t}
  w_0 \gamma m_1 (t) = n' m_1 (t') h' k'
\end{equation}
according to the Iwasawa decomposition. We note that $h'$ can be taken to be the identity in our particular case. 

Thus we arrive at determining the absolute convergence for
\begin{align*}
  &\sum_{\gamma \in N (X_1,Ee_1^-) \setminus \{I\} } \int_{[\RGL_1]} \int_{[N (Z_1,Ee_1^-) ]} \int_{[G (Z\oplus Ev)]}
  |\eta(w_0\gamma  m_1 (t)) |^{s+C_3} ||h||^{C_2}\\
  &\qquad f_s (hn)  dh dn dt  .
\end{align*}
The inner integral over $h$ is absolutely convergent due to the cuspidality of $\sigma$ and hence the rapid decay of $f_s (hn)$. Thus we just need to show the absolute convergence of 
\begin{align}\label{eq:omega22-int-over-t}
  \sum_{\gamma \in N (X_1,Ee_1^-) \setminus \{I\} }  \int_{[\RGL_1]} |\eta(w_0\gamma  m_1 (t)) |^{s+C_3} dt.
\end{align}
We exchange the order of summation and integration. Consider the series
\begin{equation*}
  g \mapsto \sum_{\gamma \in N (X_1,Ee_1^-) \setminus \{I\} } |\eta(w_0\gamma  g) |^{s+C_3}.
\end{equation*}
This is almost an Eisenstein series associated to the parabolic $Q_1$ and the section $F_s: g\mapsto |\eta(g)|^{s+C_3}$. Set $C_4=C_3-\rho_{Q_1}$ and $s'=s+C_4$. Then the series is 
\begin{equation}\label{eq:almost-Eis}
  E(g,F_{s'}) - F_{s'}(g) - F_{s'}(w_0g).
\end{equation}
We show that  the integral of this over $[\RGL_1]$ is absolutely convergent when $\Re s$ is large enough.
By \cite[Corollary~I.2.12]{MR1361168}, $E(g,F_{s'})-F_{s'}(g)-M(w_0,s')F_{s'}(g)$ is rapidly decreasing. In particular, if we restrict to $g=m_1(t)$, then it is rapidly decreasing when $|t|\rightarrow \infty$. Hence \eqref{eq:almost-Eis}, which is equal to 
\begin{align*}
  &   E(m_1(t),F_{s'}) - F_{s'}(m_1(t)) -M(w_0,s')F_{s'}(m_1(t)) \\
  & \quad -F_{s'}(w_0m_1(t)) + M(w_0,s')F_{s'}(m_1(t)),
\end{align*}
 decays like $|t|^{-s'+\rho_{Q_1}}$ when $|t|\rightarrow \infty$. On the other hand, if we set $g=w_0m_1(t)w_0^{-1} = m_1(t^{-1})$, then 
\begin{equation*}
  E(w_0m_1(t)w_0^{-1} ,F_{s'})-F_{s'}(w_0m_1(t)w_0^{-1} )-M(w_0,s')F_{s'}(w_0m_1(t)w_0^{-1})
\end{equation*}
is rapidly decreasing when $|t|\rightarrow 0$. Using the fact that the inducing section is unramified, we find that the above quantity is equal to
\begin{equation*}
  E(m_1(t) ,F_{s'})-F_{s'}(w_0m_1(t) )-M(w_0,s')F_{s'}(w_0m_1(t)).
\end{equation*}
Hence \eqref{eq:almost-Eis}, which is equal to 
\begin{align*}
  &E(m_1(t),F_{s'}) - F_{s'}(m_1(t)) - F_{s'}(w_0m_1(t))\\
  &-M(w_0,s')F_{s'}(w_0m_1(t)) +M(w_0,s')F_{s'}(w_0m_1(t)),
\end{align*}
decays like $|t|^{s'-\rho_{Q_1}}$ as $|t|\rightarrow 0$. We have shown that the integrand $E(m_1(t),F_{s'}) - F_{s'}(m_1(t)) - F_{s'}(w_0m_1(t))$ in \eqref{eq:omega22-int-over-t} decays like $|t|^{-s'+\rho_{Q_1}}$ as $|t|\rightarrow \infty$ and decays like $|t|^{s'-\rho_{Q_1}}$ as $|t|\rightarrow 0$. Thus the integral is absolutely convergent.

Next we set $\xi=\xi_s^c$. Using an analogous analysis to the case $\xi=\xi_{c,s}$, we reach the point when we need to determine the absolute convergence of 
\begin{equation}\label{eq:omega22-int-t-upper-trunc}
  \sum_{\gamma \in N(X_1,e_1^{-})\setminus \{I\} }  \int_{[\RGL_1]} |\eta(w_0\gamma  m_1 (t)) |^{-s+C_4} \hat{\tau}^c(H(w_0\gamma  m_1 (t))) dt
\end{equation}
for some constant $C_4$. Here we have kept the truncation.
Recall that the set $N(X_1,e_1^{-})\setminus \{I\}$ we sum over is enlarged from the set that is parametrised by the four series in \eqref{eq:four-series}. We may use a  smaller set as long as it contains the set parametrised by the four series in \eqref{eq:four-series}. We explain which set we use.
Recall that
 $N(X_1,e_1^-)$ is isomorphic to $\Hom_E (\ell_1^+, X) \rtimes \Herm_1$.  We parametrise
 $n\in N(X_1,e_1^-) $   as $n(\mu,\beta)$ for $\mu\in \Hom_E (\ell_1^+, X)$ and $\beta\in \Herm_1$.
Observing $\gamma^{(3)}$ always has a non-zero `$\mu$-part', we may consider the series over the set which consists of elements $n(\mu,\beta)$ with $\mu\neq 0$ in $N(X_1,e_1^{-}) \setminus \{I\}$. 
As $\exp(H(w_0\gamma  m_1 (t)))$ is comparable to $||e_1^- w_0\gamma  m_1 (t)||^{-1}$, only those terms such that $||e_1^- w_0\gamma  m_1 (t)|| \le c^{-1}$ can possibly contribute. Writing $\gamma=n(\mu,\beta) \in N(X_1,e_1^{-})\setminus \{I\}$ with $\mu\neq 0$ and setting
$\beta'= \beta - \half \mu\mu^*$, we have
\begin{equation*}
  ||e_1^- w_0\gamma  m_1 (t)|| = ||(t,\mu,\bar{t}^{-1}\beta')|| \ge ||\mu|| \ge 1.
\end{equation*}
This means if $c$ is large enough, nothing survives and \eqref{eq:omega22-int-t-upper-trunc} is zero. Hence we have proven

\begin{lemma}\label{lemma:conv-22}\leavevmode
  \begin{enumerate}
  \item For $s$ with $\Re s$ large enough,  $I(\xi_{c,s})$ is
    absolutely convergent.
  \item For $c$ large enough and  $s$  not a pole of $M(w_0,s)$,   $I(\xi_s^c)$ is absolutely convergent and is equal to zero.
\end{enumerate}
\end{lemma}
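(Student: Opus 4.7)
The plan is to analyse the integral $I_{\Omega_{2,2}}(\xi)$ by combining the Bruhat decomposition of $\gamma_{v+z_\alpha}$ with the bound on theta series from Lemma~\ref{lemma:theta-bound}, the cuspidality of $\sigma$, and the rapid decay properties of truncated Eisenstein series. For one term of the double sum in \eqref{eq:I-Omega-22}, I would apply the factorisation $\gamma_{v+z_\alpha} = \gamma^{(1)} w_0 \gamma^{(2)} \gamma^{(3)}$ from \eqref{eq:omega22-gamma-bruhat}. Since $h\gamma^{(1)}h^{-1}\in N_1(\AA)$ for $h\in G(Z\oplus Ev)(\AA)$, the theta series absorbs this factor via its left $N_1(F)$-invariance, and an Iwasawa decomposition relative to $Q(Z_1,Ee_1^-)$ lets me fold $\gamma^{(3)}m_1(t)$ into an $\RGL_1(\AA)\times N(Z_1,Ee_1^-)(\AA)$ integral.

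Summing over $\alpha$, $v\in V$ with $\form{v}{v}=\alpha$, and the Fourier expansion coming from the inner unipotent integration, I would majorise the whole series by a coarser sum over $\gamma\in N(X_1,Ee_1^-)\setminus\{I\}$; this contains the parametrising set but is easier to manipulate. For $\xi = \xi_{c,s}$, applying Lemma~\ref{lemma:theta-bound} and using the rapid decay of $f_s$ on $[G(X)]$ (after enlarging the domain of integration from $[G(Z\oplus Ev)]$ to $[G(X)]$) handles the inner $h$- and $n$-integrals. The remaining task is to establish absolute convergence of
\begin{equation*}
  \sum_{\gamma\in N(X_1,Ee_1^-)\setminus\{I\}} \int_{[\RGL_1]} |\eta(w_0\gamma\, m_1(t))|^{s+C_3}\, dt
\end{equation*}
for some constant $C_3$.

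The key manoeuvre is to recognise the inner sum as a near-Eisenstein series. Setting $F_{s'}(g) := |\eta(g)|^{s'+\rho_{Q_1}}$ with $s' = s + C_3 - \rho_{Q_1}$, the sum equals $E(g,F_{s'}) - F_{s'}(g) - F_{s'}(w_0 g)$. By \cite[Corollary~I.2.12]{MR1361168}, $E(g,F_{s'}) - F_{s'}(g) - M(w_0,s')F_{s'}(g)$ is rapidly decreasing. Specialising to $g = m_1(t)$ and to $g = w_0 m_1(t) w_0^{-1} = m_1(t^{-1})$, and using that the inducing section is unramified so that $F_{s'}(w_0 m_1(t^{-1})) = F_{s'}(m_1(t))$, I can cancel the two $M(w_0,s')F_{s'}$ contributions and deduce that the integrand in $t$ decays like $|t|^{-s'+\rho_{Q_1}}$ as $|t|\to\infty$ and like $|t|^{s'-\rho_{Q_1}}$ as $|t|\to 0$. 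Both tails are integrable once $\Re s$ is sufficiently large, yielding part (1).

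For $\xi = \xi_s^c$ in part (2), an analogous reduction leads to the same type of sum-integral, now multiplied by $\hat{\tau}^c(H(w_0\gamma\,m_1(t)))$. Since $\exp(H(w_0\gamma\, m_1(t)))$ is comparable to $\|e_1^- w_0\gamma\, m_1(t)\|^{-1}$, only terms with $\|e_1^- w_0\gamma\, m_1(t)\| \le c^{-1}$ contribute. Parametrising $\gamma = n(\mu,\beta)$ with $\mu\neq 0$ and writing $\beta' = \beta - \tfrac{1}{2}\mu\mu^*$, a direct computation gives $\|e_1^- w_0\gamma\, m_1(t)\| = \|(t,\mu,\bar t^{-1}\beta')\| \ge \|\mu\| \ge 1$, so for $c>1$ no term survives and the integral is identically zero. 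The main obstacle will be the Eisenstein-series identification in part (1): one must keep careful track of the two boundary terms $F_{s'}(g)$ and $F_{s'}(w_0 g)$ and exploit the unramified-section hypothesis to ensure that the $M(w_0,s')F_{s'}$ contributions cancel cleanly, without picking up spurious poles from the intertwining operator.
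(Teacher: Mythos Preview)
Your proposal is correct and follows essentially the same approach as the paper: Bruhat decomposition of $\gamma_{v+z_\alpha}$, enlargement of the four series to a sum over $N(X_1,Ee_1^-)\setminus\{I\}$, the near-Eisenstein identification $E(g,F_{s'}) - F_{s'}(g) - F_{s'}(w_0g)$ with tail estimates via \cite[Corollary~I.2.12]{MR1361168}, and for $\xi_s^c$ the height lower bound $\|e_1^- w_0\gamma\,m_1(t)\|\ge\|\mu\|\ge 1$ killing all terms for large $c$. One minor slip: the theta series is only left $N_1(F)$-invariant, not $N_1(\AA)$-invariant, so the adelic factor $h\gamma^{(1)}h^{-1}$ is not literally ``absorbed'' --- rather, it is handled by the uniformity in $n\in N_1(\AA)$ of the bound in Lemma~\ref{lemma:theta-bound}, which is what you correctly invoke afterwards.
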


\bibliographystyle{alpha}

\end{document}